\numberwithin{equation}{section}
\newtheorem{thm}{Theorem}[section]
\newtheorem{lemma}[thm]{Lemma}
\newtheorem{prop}[thm]{Proposition}
\newtheorem{cor}[thm]{Corollary}
{\theorembodyfont{\rmfamily}
\newtheorem{example}[thm]{Example}
\newtheorem{rmk}[thm]{Remark}
}
\newcommand{\qed}{\hfill \mbox{\raggedright \rule{.07in}{.1in}}}
\newenvironment{proof}{\vspace{1ex}\noindent{\bf
Proof}\hspace{0.5em}}{\hfill\qed\vspace{1ex}}
\newenvironment{pfof}[1]{\vspace{1ex}\noindent{\bf Proof of
#1}\hspace{0.5em}}{\hfill\qed\vspace{1ex}}
\newcommand{\R}{{\mathbb R}}
\newcommand{\Z}{{\mathbb Z}}
\newcommand{\N}{{\mathbb N}}
\newcommand{\E}{{\mathbb E}}
\newcommand{\PP}{{\mathbb P}}
\newcommand{\cB}{{\mathcal B}}
\newcommand{\cC}{{\mathcal C}}
\newcommand{\cJ}{{\mathcal J}}
\newcommand{\cM}{{\mathcal M}}
\newcommand{\cW}{{\mathcal W}}
\newcommand{\tH}{{\widetilde H}}
\newcommand{\tN}{{\widetilde N}}
\newcommand{\tv}{{\widetilde v}}
\newcommand{\tW}{{\widetilde W}}
\newcommand{\tY}{{\widetilde Y}}
\newcommand{\tvarphi}{{\widetilde \varphi}}
\newcommand{\hf}{{\hat f}}
\newcommand{\hM}{{\widehat M}}
\newcommand{\hS}{{\widehat S}}
\newcommand{\hV}{{\widehat V}}
\newcommand{\hphi}{{\hat \phi}}
\newcommand{\hPhi}{{\widehat \Phi}}
\newcommand{\hpsi}{{\widehat \psi}}
\newcommand{\hPsi}{{\widehat \Psi}}
\newcommand{\eps}{\epsilon}
\newcommand{\sgn}{\operatorname{sgn}}
\newcommand{\SMALL}{\textstyle}
\title{Convergence to a L\'evy process in the Skorohod $\cM_1$ and $\cM_2$ topologies for nonuniformly hyperbolic systems,
 including billiards with cusps}
 \author{
 Ian Melbourne\thanks{Mathematics Institute, University of Warwick, Coventry, CV4 7AL, UK.  \newline i.melbourne@warwick.ac.uk}
\and
Paulo Varandas\thanks{Departamento de Matem\'atica, Universidade Federal da Bahia,
40170-110 Salvador, Brazil.  \newline pcvarand@gmail.com
}}
\date{21 September 2018.  Updated 15 May 2019}
\begin{document}

 \maketitle

 \begin{abstract}
We prove convergence to a L\'evy process for a class of dispersing billiards with cusps.  For such examples, convergence to a stable law was proved by Jung \& Zhang.  For the corresponding functional limit law, convergence is not possible in the usual Skorohod $\cJ_1$ topology.
Our main results yield elementary geometric conditions for convergence (i) in $\cM_1$, (ii) in $\cM_2$ but not $\cM_1$.

In general, we show 
for a large class of nonuniformly hyperbolic systems how to deduce functional limit laws once convergence to the corresponding stable law is known.
 \end{abstract}

\section{Introduction} 
\label{sec:intro}

It is by now well-known that deterministic dynamical systems often satisfy statistical limit theorems from classical probability theory.
Following Sinai~\cite{Sinai70}, 
a rich source of examples is provided by dispersing billiards~\cite{ChernovMarkarian} which are based on deterministic Lorentz gas models~\cite{Lorentz05}.
By~\cite{BunimSinai81,BunimovichSinaiChernov91},  
the central limit theorem (CLT) and functional central limit theorem or weak invariance principle (WIP) hold
for planar periodic dispersing billiards.
The CLT asserts convergence to a normal distribution and the WIP deals with convergence to the corresponding Brownian notion.
These limit laws also hold for Sinai billiards where the boundary of the table is a simple closed curve consisting of finitely many $C^3$ convex inwards curves with nonvanishing curvature and nonzero angles at corner points~\cite{SimoiToth14}.
For billiards with cusps (corner points with zero angle), the CLT and WIP were obtained by~\cite{BalintChernovDolgopyat11} but with the weakly superdiffusive normalization $(n\log n)^{1/2}$ instead of the standard diffusion rate $n^{1/2}$.  

Stronger superdiffusion rates $n^{1/\alpha}$, $\alpha<2$, with limiting fluctuations governed by an
$\alpha$-stable L\'evy process rather than a Brownian motion,
 have been the focus of much attention across the physical sciences.   See for example~\cite{BL,GaspardWang88, Gouezel04, KGSSR, MZ15, Metzler, PVHS, SamorodnitskyTaqqu, Swinney, Whitt} and references therein.
Whereas Brownian motions are continuous processes with finite variance, L\'evy processes  exhibit jumps of all sizes and have infinite variance. 

In this paper, we show for the first time that convergence to a L\'evy process occurs in dispersing billiards.  The example is elementary to write down and the mechanism for superdiffusion is intuitively transparent.  Moreover, our analysis casts light on the mode of convergence, an aspect which has received little attention previously.

Recently, Jung \& Zhang~\cite{JungZhangsub} considered a class of billiards with cusps where there is vanishing curvature at the cusp and proved convergence to totally skewed $\alpha$-stable laws with $\alpha\in(1,2)$.  However, they were unable to prove the functional WIP version of their limit law (i.e.\ weak convergence to the corresponding
$\alpha$-stable L\'evy process).

In this paper, as part of a general framework including~\cite{JungZhangsub}, we show how to pass from the stable law to the WIP.
The standard $\cJ_1$ Skorohod topology~\cite{Skorohod56,Whitt} is always too strong for these examples, but we obtain convergence in the $\cM_1$ and $\cM_2$ topologies.
The definition of these Skorokhod topologies is recalled in Appendix~\ref{app:topologies}.

It is well-known that the $\cJ_1$ topology is often too strong, and there are many
natural examples where the $\cM_1$ topology is the appropriate one, see for example~\cite{AvramTaqqu92,BasrakKrizmanicSegers12,MZ15,Whitt}.  Indeed,
Whitt~\cite[p.~xii]{Whitt} writes
{\advance\leftmargini -1.3em
\begin{quote}
Thus, while the $\cJ_1$ topology sometimes cannot be used, the
$\cM_1$ topology can almost always be used. Moreover, the extra strength of the $\cJ_1$
topology is rarely exploited. Thus, we would be so bold as to suggest that, \emph{if only
one topology on the function space D is to be considered, then it should be the $\cM_1$
topology}.
\end{quote}
}
\noindent Jakubowski~\cite{Jakubowski07} writes
{\advance\leftmargini -1.3em
\begin{quote}
All these reasons bring interest also to the
weaker Skorokhod's topologies $\cJ_2$, $\cM_1$ and $\cM_2$. Among them practically only
the topology $\cM_1$ proved to be useful.
\end{quote}
}

\noindent Nevertheless, in this paper we provide natural examples where the $\cM_1$ topology is too strong and the $\cM_2$ topology is the appropriate one.
The only previous such example that we know of can be found in~\cite{BasrakKrizmanic14}.

\begin{example} \label{ex:JZ}
We consider the Jung \& Zhang example~\cite{JungZhangsub} consisting of a planar dispersing billiard with a cusp at a flat point.  
A standard reference for background material on billiards is~\cite{ChernovMarkarian}.

The billiard table $Q\subset\R^2$ has a boundary consisting of a finite number of $C^3$ curves $\Gamma_i$, $i=1,\dots,n_0$, where $n_0\ge3$ with a cusp formed by two of these curves $\Gamma_1$, $\Gamma_2$.
In coordinates $(s,z)\in\R^2$, the cusp lies at $(0,0)$ and $\Gamma_1$, $\Gamma_2$ are tangent to the $s$-axis at $(0,0)$.  Moreover, close to $(0,0)$, 
we have $\Gamma_1=\{(s,\beta^{-1}s^\beta)\}$,
$\Gamma_2=\{(s,-\beta^{-1}s^\beta)\}$, where $\beta>2$.
See Figure~\ref{fig:JZ}.
\footnote{In~\cite{JungZhangsub}, it is assumed in addition that 
the trajectory running out of the cusp along the $s$-axis hits $\Gamma_3$ perpendicularly, but this was only done for convenience and is not present in~\cite{JungPeneZhangsub}.}

\begin{figure}[htb]
\centerline{
\includegraphics[scale=.5]{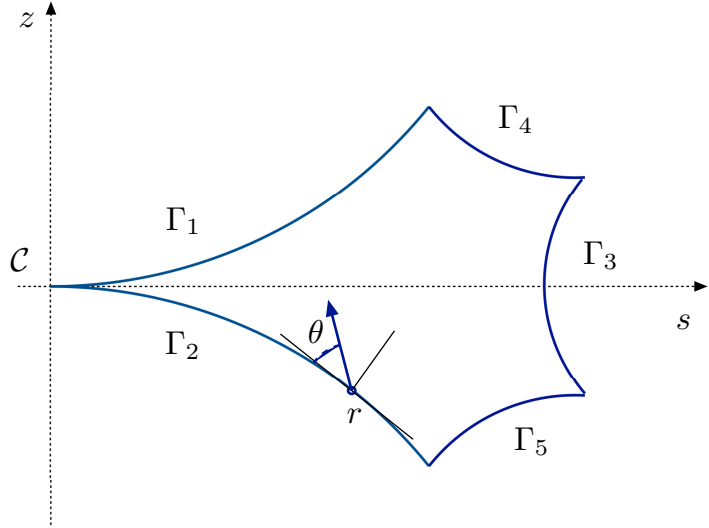}}
\caption{Billiard with a cusp at a flat point as studied by Jung \& Zhang.}
\label{fig:JZ}
\end{figure}

The phase space of the billiard map (or collision map) $T$ is given by $\Lambda=\partial Q\times[0,\pi]$, with coordinates $(r,\theta)$ where $r$ denotes arc length along $\partial Q$ and $\theta$ is the angle between the tangent line of the boundary and the collision vector in the clockwise direction.
There is a natural ergodic invariant probability measure $d\mu=(2|\partial Q|)^{-1}\sin\theta\,dr\,d\theta$ on~$\Lambda$, where $|\partial Q|$ is the length of $\partial Q$.

In configuration space,
the cusp is a single point $(0,0)=\Gamma_1\cap\Gamma_2$.
Let $r'\in\Gamma_1$ and
$r''\in\Gamma_2$ be the arc length coordinates of $(0,0)$.
Then in phase space $\Lambda$, the cusp is
the union of two line segments 
\[
\cC=\{(r',\theta):0\le\theta\le\pi\} \cup\{(r'',\theta):0\le\theta\le\pi\}.
\]

Let $v:\Lambda\to\R$ be a H\"older continuous observable with $\int_\Lambda v\,d\mu=0$ and define\footnote{Our definitions differ from those in~\cite{JungZhangsub} by constant factors, leading to simpler formulas in Section~\ref{sec:JZ}.}
\begin{align} \label{eq:Iv}
I_v(s)=\frac12\int_0^s \{v(r',\theta)+v(r'',\pi-\theta)\} (\sin\theta)^{1/\alpha}\,d\theta,
\quad  s\in[0,\pi].
\end{align}
where $\alpha=\frac{\beta}{\beta-1}\in(1,2)$.
Suppose that $I_v(\pi)>0$
(the case $I_v(\pi)<0$ is identical with the obvious modifications).
Let $G$ be the totally skewed $\alpha$-stable law with characteristic function
\[
\E(e^{iuG})=\exp\{-|u|^\alpha\sigma^\alpha(1-i\sgn u\tan{\SMALL\frac{\pi\alpha}{2}})\},
\quad \sigma^\alpha=(\beta|\partial Q|2^{\alpha-1})^{-1}I_v(\pi)^\alpha\Gamma(1-\alpha)\cos{\SMALL\frac{\pi\alpha}{2}}.
\]

Jung \& Zhang~\cite[Theorem~1.1]{JungZhangsub} prove:
\begin{thm} \label{thm:JZ}
$n^{-1/\alpha}\sum_{j=0}^{n-1}v\circ T^j\to_d G$. \qed
\end{thm}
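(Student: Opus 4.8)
The plan is to reduce everything to a stable limit theorem for a suitably induced map, so that the heavy tails are produced entirely by cusp excursions. First I would fix a reference set $Y\subset\Lambda$ bounded away from the cusp segments $\cC$ --- for instance a union of rectangles on which the standard hyperbolic structure of dispersing billiards is available --- and pass to the first return map $F=T^\tau\colon Y\to Y$, with $\tau$ the first return time and $\mu_Y$ the normalized restriction of $\mu$. By the now-standard analysis of dispersing billiards with cusps (Chernov--Markarian, and the constructions behind~\cite{BalintChernovDolgopyat11}), $F$ is modelled by a Young tower with exponential tails for the hyperbolicity, hence has exponential decay of correlations and the usual battery of limit theorems for finite-variance observables. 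The virtue of inducing is that the large values of the induced observable $\tv=\sum_{j=0}^{\tau-1}v\circ T^j$ arise precisely from orbits that make a long excursion into the cusp before returning to $Y$.

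Second --- and this is the technical core --- I would establish the regular variation of $\tv$. An orbit entering the cusp region slides along $\Gamma_1$ and $\Gamma_2$, making a large number $N$ of collisions before escaping, with $N$ governed by how deeply, in the $(s,z)$ coordinates, the orbit penetrates; consecutive collisions alternate between $\Gamma_1$ and $\Gamma_2$ and, by the near-conservation of the angle through the excursion together with the explicit curves $\Gamma_1=\{(s,\beta^{-1}s^\beta)\}$, $\Gamma_2=\{(s,-\beta^{-1}s^\beta)\}$, the Birkhoff sum $\tv$ collected along such an excursion equals, to leading order, a quantity built from the integrand $\{v(r',\theta)+v(r'',\pi-\theta)\}(\sin\theta)^{1/\alpha}$ in~\eqref{eq:Iv}, with the partial integrals $I_v(\cdot)$ controlling the fluctuations and $I_v(\pi)$ the leading coefficient. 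Combining this with the invariant density $\sin\theta\,dr\,d\theta$ and the change of variables from $(r,\theta)$ near $\cC$ to $(s,\theta)$, one finds that $\mu_Y(\tv>x)\sim c\,x^{-\alpha}$ with $\alpha=\frac{\beta}{\beta-1}$ and an explicit constant $c$ expressed through $I_v(\pi)$, $\beta$ and $|\partial Q|$; since $I_v(\pi)>0$ the tail is one-sided, so $\tv$ is totally skewed, and careful bookkeeping of the normalizations produces exactly the $\sigma^\alpha$ of the statement.

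Third, with $\tv$ shown to lie in the domain of attraction of the totally skewed $\alpha$-stable law $G$, I would invoke the transfer machinery for nonuniformly hyperbolic maps: because $F$ has a Young tower with exponential tails, the stable limit $n^{-1/\alpha}\sum_{j=0}^{n-1}\tv\circ F^j\to_d G$ follows from the standard Gordin / Aaronson--Denker--Gou\"ezel approach (good mixing for $F$ together with regular variation of the inducing observable). The limit for $T$ is then recovered by unwinding the inducing step: one controls the number of returns $A_n(y)=\#\{j<n:T^jy\in Y\}$ via Birkhoff's theorem, $A_n/n\to 1/\bar\tau$ with $\bar\tau=\int_Y\tau\,d\mu_Y$, and shows that the difference between $\sum_{j<n}v\circ T^j$ and $\sum_{j<A_n}\tv\circ F^j$ --- a single incomplete cusp excursion straddling time $n$ --- is $o_\PP(n^{1/\alpha})$ by a one-big-jump / maximal estimate, so that rescaling by $\bar\tau$ (absorbed into $\sigma$) yields the claimed convergence.

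The hard part is the second step: it is a genuine billiard-geometry computation --- precise control of the number of collisions in a cusp excursion, of how the collision angle evolves, and of how the measure of deep excursions scales --- rather than a soft dynamical argument, and extracting the exact constant $\sigma^\alpha$ (not merely the exponent $\alpha$) requires tracking every normalization through the change of variables, which is where the factors $\Gamma(1-\alpha)\cos\frac{\pi\alpha}{2}$ and $2^{\alpha-1}$ appear. By contrast the tower estimates for $F$, the abstract stable limit theorem, and the unwinding of the inducing are essentially routine.
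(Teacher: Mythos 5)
Your architecture is the same as the paper's: induce on a section away from the cusp, show that the induced observable is asymptotically $I$ times the (heavy-tailed) return time, prove a stable limit theorem for the induced system, and uninduce. The main structural difference is where the heavy-tail analysis is done. You propose to recompute the regular variation of the induced observable $\tv$ from the cusp geometry, including the exact constant $\sigma^\alpha$. The paper instead takes the stable law for the first return time $\varphi$ to $X=(\Gamma_3\cup\dots\cup\Gamma_{n_0})\times[0,\pi]$ as a black box from \cite[Theorem~3.1]{JungZhangsub} (condition~\eqref{eq:stable}), and the only billiard-geometry input it needs beyond that is the comparatively soft Proposition~\ref{prop:I}, which gives $V=I\varphi+E$ with $|E|\ll\varphi^{1-\delta}$ and hence $E\in L^p$ for some $p>\alpha$ (condition~\eqref{eq:Lp}); Theorem~\ref{thm:JZ} is then immediate from the abstract Theorem~\ref{thm:stable}. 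So your plan front-loads more work into the geometry than is necessary, but it is not wrong.

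Where I would push back is on the two steps you label ``essentially routine,'' because that is where the paper's actual content lies, and one of them has a real issue as you state it. First, the stable limit theorem for $\sum_{j<n}\tv\circ F^j$ does not follow from ``good mixing for $F$ together with regular variation'': Aaronson--Denker and Gou\"ezel's results apply to observables that are (locally constant or Lipschitz) on cylinders of a Gibbs--Markov map, whereas $\tv$ depends on the stable coordinate of the two-dimensional hyperbolic return map. One must induce further to the Young tower base $Y$, quotient along stable leaves, observe that $\varphi^Y$ \emph{is} constant on partition elements (so the stable law applies to $I\varphi^Y$ via \cite{AaronsonDenker01,Gouezel10b,TyranKaminska10}), and then kill the remainder $V^Y-I\varphi^Y$ by a martingale--coboundary argument with Burkholder and Doob; this is exactly why the $L^p$ bound with $p>\alpha$ (not just a pointwise $o(\varphi)$ estimate) is needed — see Sections~\ref{sec:An}--\ref{sec:proof} and Lemma~\ref{lem:mart}. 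Second, the uninducing step is more delicate than ``$A_n/n\to1/\bar\tau$ by Birkhoff'': since the return time is itself in the domain of attraction of an $\alpha$-stable law, the lap number fluctuates by $O_P(n^{1/\alpha})$ around $n/\bar\tau$, i.e.\ on exactly the scale of the normalization, and one needs the argument of \cite[Theorem~A.1]{Gouezel07} (reproduced and extended in Appendix~\ref{app:induce}) to show that the Birkhoff sum of $\tv$ over that random window is still $o_P(n^{1/\alpha})$. Both gaps are fillable with known machinery, but they are the theorem's substance rather than its afterthought.
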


Let $D[0,\infty)$ denote the set of real-valued c\`adl\`ag functions (right-continuous with left-hand limits) on $[0,\infty)$,
and let $W\in D[0,\infty)$ be the $\alpha$-stable L\'evy process with $W(1)=_d G$.
Define 
\[
W_n:\Lambda\to D[0,\infty), \qquad  \SMALL W_n(t)=n^{-1/\alpha}\sum_{j=0}^{[nt]-1}v\circ T^j.
\]
Since the increments of $W_n$ are bounded by $n^{-1/\alpha}|v|_\infty$ and $W$ has jumps with probability one, $W_n$ does not converge to $W$ in the $\cJ_1$ topology.  However, the weaker $\cM_1$ topology allows an amalgamation of numerous small increments for $W_n$ to approximate a single jump for $W$.  This is analogous to the situation for intermittent maps of Pomeau-Manneville type~\cite{PomeauManneville80} studied in~\cite{MZ15}.  
In contrast to~\cite{MZ15},
convergence in $\cM_1$ is not automatic.
Instead, there is a simple geometric condition on $v|_\cC$ which characterizes convergence in $\cM_1$:

\begin{thm} \label{thm:JZus1}
$W_n\to_w W$ in $(D[0,\infty),\cM_1)$ if and only if 
$v(r',\theta)+v(r'',\pi-\theta)\ge0$ for all $\theta\in[0,\pi]$.
(Equivalently, $s\mapsto I_v(s)$ is nondecreasing on $[0,\pi]$.)
\end{thm}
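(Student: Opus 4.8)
\medskip
\noindent\emph{Proof strategy.}\quad The parenthetical reformulation is immediate: differentiating~\eqref{eq:Iv} gives $I_v'(s)=\tfrac12\{v(r',s)+v(r'',\pi-s)\}(\sin s)^{1/\alpha}$ and $(\sin s)^{1/\alpha}>0$ on $(0,\pi)$, so by continuity of $v$ the map $s\mapsto I_v(s)$ is nondecreasing on $[0,\pi]$ if and only if $v(r',\theta)+v(r'',\pi-\theta)\ge0$ for all $\theta\in[0,\pi]$; from now on I use the condition ``$I_v$ nondecreasing on $[0,\pi]$''. The argument then has two layers: the general $\cM_1$ criterion established elsewhere in this paper, and a description of the Birkhoff sums accumulated during a single cusp excursion. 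As in~\cite{JungZhangsub} (building on~\cite{BalintChernovDolgopyat11}), I would induce $T$ on a reference set $Y\subset\Lambda$ bounded away from the cusp~$\cC$, producing a Gibbs--Markov-type first-return map $F=T^{\varphi}\colon Y\to Y$ whose return time $\varphi$ counts the collisions made inside the cusp and satisfies $\mu_Y(\varphi>n)=\ell(n)\,n^{-\alpha}$ with $\ell$ slowly varying, together with the induced observable $\tv=\sum_{j=0}^{\varphi-1}v\circ T^j$. The general theorem reduces $W_n\to_w W$ in $\cM_1$ to a statement purely about the \emph{excursion paths} $k\mapsto S_k(y):=\sum_{j=0}^{k-1}v(T^jy)$, $0\le k\le\varphi(y)$ --- roughly, that for $\mu_Y$-a.e.\ $y$ these paths lie within $o(\varphi(y))$ of a nondecreasing interpolation between $0$ and $\tv(y)$ --- plus moment bounds on $\tv$ that are automatic since $v$ is bounded.

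The key step is the description of the excursion paths, which I would extract from the proof of Theorem~\ref{thm:JZ}. When $\varphi(y)=m$ is large, the orbit $y,Ty,\dots,T^{m-1}y$ makes (apart from an $O(1)$ number of collisions away from $\cC$, which shift $S$ by $O(1)$) about $m$ collisions alternating between small neighbourhoods of $r'$ on $\Gamma_1$ and of $r''$ on $\Gamma_2$, and as the trajectory plunges into and out of the cusp a progress parameter increases monotonically; combining the H\"older continuity of $v$ with the cusp geometry analysed by Jung \& Zhang yields
\[
S_k(y)=\kappa\,\varphi(y)\,I_v(\psi_k)+o(\varphi(y)),\qquad 0\le k\le\varphi(y),
\]
uniformly in $k$ and $y$, where $\kappa>0$ is constant and $k\mapsto\psi_k\in[0,\pi]$ is nondecreasing with $\psi_0=0$ and $\psi_{\varphi(y)}\to\pi$ as $\varphi(y)\to\infty$; in particular $\tv(y)=\kappa\,\varphi(y)\,I_v(\pi)+o(\varphi(y))$, consistent with the positive skewness of $G$ since $I_v(\pi)>0$. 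If $I_v$ is nondecreasing on $[0,\pi]$ then $S_k(y)$ lies within $o(\varphi(y))$ of the nondecreasing function $k\mapsto\kappa\varphi(y)I_v(\psi_k)$, which runs from $0$ to $\tv(y)\ge0$; this is exactly the hypothesis of the general $\cM_1$ criterion, so $W_n\to_w W$ in $(D[0,\infty),\cM_1)$.

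Conversely, suppose $I_v$ is not nondecreasing on $[0,\pi]$. Since $I_v$ is continuous with $I_v(0)=0<I_v(\pi)$ it cannot be monotone, so there exist $0\le a<b<c\le\pi$ with $e_0:=\operatorname{dist}\bigl(I_v(b),[I_v(a)\wedge I_v(c),\,I_v(a)\vee I_v(c)]\bigr)>0$. Fix $0<\lambda<\Lambda<\infty$. For all large $n$, with probability at least $c_1=c_1(\lambda,\Lambda)>0$ there is within the first $n$ iterates a cusp excursion $y$ with $\lambda n^{1/\alpha}\le\varphi(y)\le\Lambda n^{1/\alpha}$ whose $t$-window has diameter $\asymp n^{1/\alpha-1}\to0$; by the displayed formula, at suitable times $t_1<t_2<t_3$ inside that window the rescaled path takes the values $W_n(t_1^-)+\kappa\lambda_n I_v(a)$, $W_n(t_1^-)+\kappa\lambda_n I_v(b)$, $W_n(t_1^-)+\kappa\lambda_n I_v(c)$ up to $o(1)$, where $\lambda_n=\varphi(y)n^{-1/\alpha}\in[\lambda,\Lambda]$, so that $W_n(t_2)$ lies at distance $\ge\kappa\lambda e_0-o(1)$ from the segment $[W_n(t_1),W_n(t_3)]$. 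Consequently the $\cM_1$ oscillation modulus $v_\delta(W_n):=\sup\{\operatorname{dist}(W_n(t_2),[W_n(t_1),W_n(t_3)]):t_1\le t_2\le t_3,\ t_3-t_1\le\delta\}$ satisfies $\PP(v_\delta(W_n)\ge a_0)\ge c_1$ with $a_0:=\tfrac12\kappa\lambda e_0$, for every $\delta>0$ and all large $n$. But any c\`adl\`ag $W$ has $v_\delta(W)\to0$ as $\delta\to0$ a.s.; passing to a Skorohod representation with $W_n\to W$ a.s.\ in $\cM_1$, relative compactness of convergent sequences forces $\lim_{\delta\to0}\limsup_n v_\delta(W_n)=0$ a.s.\ (see~\cite{Whitt}), whereas $\PP(\limsup_n A_n)\ge\limsup_n\PP(A_n)$ applied to $A_n=\{v_\delta(W_n)\ge a_0\}$ yields $\PP(\lim_{\delta\to0}\limsup_n v_\delta(W_n)\ge a_0)\ge c_1>0$. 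This contradiction shows that $\cM_1$ convergence forces $I_v$ to be nondecreasing.

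Granting the general $\cM_1$ criterion, the sufficiency half is mostly bookkeeping; its genuine content is the \emph{uniform} $o(\varphi(y))$ control of the excursion path --- a modest strengthening of the one-dimensional estimate $\tv\approx\kappa\varphi I_v(\pi)$ behind Theorem~\ref{thm:JZ} --- which I expect to follow from the cusp geometry (monotone-in-depth behaviour of the relevant angle variable and summability of the H\"older errors along the excursion). The main obstacle is the necessity half: one must (i) extract from the cusp dynamics the quantitative fact that a long excursion genuinely realizes the ``detour'' of the graph of $I_v$ at amplitude comparable to $n^{1/\alpha}$ over a $t$-window shrinking to a point, which needs the fine structure of deep excursions underlying Theorem~\ref{thm:JZ}; and (ii) convert such a macroscopic detour into a lower bound on the $\cM_1$ modulus --- the subtle point, since $\cM_1$ is precisely designed to absorb amalgamations of many small monotone increments, so one must rule out that the detour can be ``matched'' by the monotone jumps of the limit process $W$.
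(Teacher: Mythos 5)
Your proposal is correct in outline and, for the sufficiency half, coincides with the paper's argument: the excursion formula $S_k(y)=\kappa\,\varphi(y)\,I_v(\psi_k)+\text{error}$ is exactly Proposition~\ref{prop:I} (with $\psi_k=\Psi^{-1}(k/\varphi)$, $\kappa=I_1(\pi)^{-1}$), and monotonicity of $I_v$ then bounds $M_1$ by the error term so that Theorem~\ref{thm:levy}(a) applies. One small quantitative caveat: the paper's H\"older estimate gives the error as $O(\varphi^{1-\delta})$, which feeds directly into Proposition~\ref{prop:dom}; with only a uniform $o(\varphi)$ bound, as you state it, you would need an extra step (e.g.\ tightness of $n^{-1/\alpha}\max_{j\le n}\varphi\circ f^j$, which follows from Lemma~\ref{lem:An}(d)) to conclude $n^{-1/\alpha}\max_{j\le n}M_1\circ f^j\to_p0$. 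For the necessity half your route genuinely differs from the paper's. You convert the non-monotone ``detour'' of $I_v$, realized on an excursion of length $\asymp n^{1/\alpha}$ occurring with non-vanishing probability, into a lower bound on the $\cM_1$ oscillation modulus $w_s$, and contradict the relative-compactness characterization of $\cM_1$ convergence; this correctly resolves the subtlety you flag in point (ii), since $w_s$ measures deviation from the chord between endpoints, which $\cM_1$ cannot absorb. The paper instead proves a clean converse at the level of the general framework (Proposition~\ref{prop:converse}): since $G$ is totally skewed, $W$ has no negative jumps, so $\cM_1$ convergence forces downward oscillations of $W_n$ over short windows to vanish in probability, i.e.\ $n^{-1/\alpha}\max_{j\le n}M_1\circ f^j\to_p0$; non-monotonicity of $I_v$ then gives $M_1\ge c\varphi+O(\varphi^{1-\delta})$, and Proposition~\ref{prop:notprob} (the same point-process input from Jung--Zhang that you invoke) yields the contradiction. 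Your oscillation-modulus argument is more general in that it does not use spectral positivity of $W$, while the paper's version isolates a reusable converse proposition so that the billiard-specific work reduces to the two facts you correctly identify as the essential inputs.
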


We also have a sufficient condition for convergence in the even weaker $\cM_2$ topology.  

\begin{thm} \label{thm:JZus2}
If $I_v(s)\in[0,I_v(\pi)]$ for all $s\in[0,\pi]$, then
$W_n\to_w W$ in $(D[0,\infty),\cM_2)$.
\end{thm}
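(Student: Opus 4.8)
The plan is to follow the general scheme of the paper: pass to the induced map on a reference set that excises the long cusp excursions, promote the stable law of Theorem~\ref{thm:JZ} to a \emph{functional} limit at the level of the induced system (where the strong $\cJ_1$ topology applies), and then re-inflate, controlling the fluctuation accumulated \emph{within} each excursion. Concretely, fix a reference set $Y\subset\Lambda$ bounded away from the cusp set $\cC$, let $\varphi\colon Y\to\N$ be the first return time, $F=T^\varphi$ the induced map (modelled by a Young tower with exponential tails after quotienting stable leaves), $\mu_Y$ the normalized restriction of $\mu$ to $Y$, and $V=\sum_{j=0}^{\varphi-1}v\circ T^j$ the induced observable, so that $\int_Y V\,d\mu=0$. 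The analysis behind Theorem~\ref{thm:JZ} shows that $V$ is regularly varying of index $\alpha$ and lies in the domain of attraction of a totally skewed $\alpha$-stable law; hence the induced partial-sum process $t\mapsto n^{-1/\alpha}\sum_{k=0}^{[nt]-1}V\circ F^k$ converges weakly in $(D[0,\infty),\cJ_1)$ to an $\alpha$-stable L\'evy process. Composing with the return-time clock (which, by Kac's formula and the ergodic theorem, converges uniformly a.e.\ to $\mu(Y)\,t$) produces a c\`adl\`ag ``skeleton'' $\bar W_n$ that is constant on each return block, jumps by $n^{-1/\alpha}V\circ F^k$ at the end of the $k$th block, and, after matching constants, satisfies $\bar W_n\to_w W$ in $(D[0,\infty),\cJ_1)$.

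By construction, $W_n(t)-\bar W_n(t)$ is $n^{-1/\alpha}$ times the partial Birkhoff sum of $v$ accumulated inside the \emph{current}, still incomplete, return block. Since $\cJ_1$ is finer than $\cM_1$, which is finer than $\cM_2$, and since $\cM_2$-weak convergence survives the addition of a term that tends to $0$ in $\cM_2$ in probability, it is enough to show $d_{\cM_2}(W_n,\bar W_n)\to0$ in $\mu$-probability. Split the return blocks meeting $[0,nT]$ into the \emph{non-macroscopic} ones, with $n^{-1/\alpha}|V\circ F^k|<\eps$, and the finitely many \emph{macroscopic} ones. For the non-macroscopic blocks, the within-block detours are controlled by the standard truncation estimates for an observable in a stable domain of attraction: since $\alpha\in(1,2)$, the $\eps n^{1/\alpha}$-truncated second moment of $V$ is $O\bigl((\eps n^{1/\alpha})^{2-\alpha}\bigr)$, so their cumulative effect is $O(\eps^{1-\alpha/2})$ in the sup norm with probability $\to1$ (the recentering cancelling against the truncated tail of the macroscopic part). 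This part is exactly what the ``if'' direction of Theorem~\ref{thm:JZus1} requires and may be quoted from there.

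The crux is a single macroscopic block, i.e.\ a long cusp excursion, say of combinatorial length $m$. Using the geometry of the cusp (following~\cite{ChernovMarkarian, JungZhangsub}): the collisions in such an excursion alternate between $\Gamma_1$ and $\Gamma_2$ with angles related by the reflection $\theta\leftrightarrow\pi-\theta$, there is a monotone reparametrization of the collision index by an angle variable $s\in[0,\pi]$, and the rescaled partial Birkhoff sum after $\ell\le m$ collisions converges, uniformly in $\ell$ and over the finitely many macroscopic excursions in $[0,nT]$, to $\lambda\,I_v(s_\ell)$, where $\lambda>0$ measures the size of the excursion, $s_\ell\in[0,\pi]$ is the corresponding value of the angle variable, $I_v$ is as in~\eqref{eq:Iv}, and $\lambda I_v(\pi)=n^{-1/\alpha}V\circ F^k+o(1)$ is precisely the jump of $\bar W_n$ at the block's end. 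This is a path-level refinement of the tail computation for $V$ underlying Theorem~\ref{thm:JZ}, and proving its \emph{uniformity} is the main obstacle. Now $I_v$ is continuous with $I_v(0)=0$, so the hypothesis $I_v(s)\in[0,I_v(\pi)]$ forces $I_v([0,\pi])=[0,I_v(\pi)]$; hence the rescaled detour both stays within an $o(1)$-neighbourhood of the straight segment joining the values of $\bar W_n$ just before and just after the jump, and sweeps that segment to within $o(1)$. Moreover the time-width $m/n$ of the excursion tends to $0$, since a macroscopic jump forces $m$ to be of smaller order than $n$ (here $\alpha<2$).

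Assembling these facts in the $\cM_2$ metric: the completed (thick) graph of $\bar W_n$ already contains the whole vertical jump segment, the graph of $W_n$ over each macroscopic block lies within an $o(1)$-ball of it (the detour staying near it and the block being horizontally negligible), and conversely the detour covers that segment to within $o(1)$; adding the $O(\eps^{1-\alpha/2})$ sup-norm contribution of the non-macroscopic blocks and letting $\eps\to0$ gives $d_{\cM_2}(W_n,\bar W_n)\to0$ in probability. Together with $\bar W_n\to_w W$ in $\cJ_1$, and hence in $\cM_2$, this yields $W_n\to_w W$ in $(D[0,\infty),\cM_2)$. (If moreover $I_v$ is nondecreasing, the detour traverses the jump segment monotonically and the same argument gives convergence in $\cM_1$, consistent with Theorem~\ref{thm:JZus1}; the monotonicity of the reparametrization $\ell\mapsto s_\ell$ is what makes the extra hypothesis of Theorem~\ref{thm:JZus1} amount to monotonicity of $I_v$ alone.)
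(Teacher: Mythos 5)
Your overall strategy --- induce, obtain a $\cJ_1$ functional limit for the induced sums, and reassemble by controlling the within-excursion fluctuations through the profile $I_v$ --- is the same as the paper's, but two steps are asserted rather than proved and each hides real work. First, ``$V$ lies in the domain of attraction of a stable law, hence the induced partial-sum process converges in $\cJ_1$'' is not a valid implication for dependent sequences: to invoke Tyran-Kami\'nska's theorem one needs a Gibbs--Markov structure and an observable constant on partition elements, which $V$ is not. The paper gets around this with a two-level induction: the stable law is first transferred from the first return time $\varphi$ on $X$ to $\varphi^Y$ on the Young-tower base $Y$ (Lemma~\ref{lem:An}, using Theorem~\ref{thm:induce}), the $\cJ_1$ WIP is proved there for $\varphi^Y$ via~\cite{Gouezel10b,TyranKaminska10}, and the remainder $V^Y-I\varphi^Y$ is handled by the martingale--coboundary estimate of Lemma~\ref{lem:mart}, which needs the $L^p$ bound~\eqref{eq:Lp} with $p>\alpha$. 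Your single-level set-up cannot support this chain as written.

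Second, your reassembly splits blocks into macroscopic and non-macroscopic ones and controls the latter by a truncated-second-moment bound that you propose to ``quote from Theorem~\ref{thm:JZus1}''; no such truncation argument appears there, and proving the corresponding maximal inequality for the dependent truncated sums would be a separate task. The paper avoids the dichotomy entirely: Proposition~\ref{prop:I} gives the uniform expansion $v_\ell=\varphi\, I_1(\pi)^{-1}I_v\circ\Psi^{-1}(\ell/\varphi)+O(\varphi^{1-\delta})$ for \emph{every} excursion, so the hypothesis $I_v(s)\in[0,I_v(\pi)]$ yields the pointwise bound $M_2\le 3C\varphi^{1-\delta}$, and Proposition~\ref{prop:dom} (via Corollary~\ref{cor:An}) then gives $n^{-1/\alpha}\max_{j\le n}M_2\circ f^j\to_p0$, which is exactly the hypothesis of the abstract reassembly result Theorem~\ref{thm:levy}(b) (whose proof, through Lemma~\ref{lem:approx}, formalizes your Hausdorff-distance picture). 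The uniform within-excursion asymptotics that you correctly single out as ``the main obstacle'' is precisely the content of Proposition~\ref{prop:I}; without it your argument is incomplete. A minor point: for $\cM_2$ you do not need $I_v$ to sweep out all of $[0,I_v(\pi)]$ --- the completed graph of $W_n$ automatically meets every horizontal level between the endpoint values within the $o(1)$ time-window of the excursion, as in the proof of Lemma~\ref{lem:approx}; what the hypothesis buys is only that the detour never leaves the interval between the endpoints by more than $O(\varphi^{1-\delta})$.
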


It is now easy to construct a H\"older continuous mean zero observable $v: \Lambda \to \R$ so that convergence holds in $\cM_2$ but not in $\cM_1$.
For example, choose $v$ so that $v(r',\theta)+v(r'',\pi-\theta)$ 
is positive on $[0,\frac{\pi}{3})\cup(\frac{2\pi}{3},\pi]$ and negative on $(\frac{\pi}{3},\frac{2\pi}{3})$.
See Figure~\ref{fig:I}(b).
The change of sign violates the condition for $\cM_1$-convergence in 
Theorem~\ref{thm:JZus1}, while it is clear that if $v$ is small enough on 
$(\frac{\pi}{3},\frac{2\pi}{3})$
comparable to its values on $[0,\frac{\pi}{3})\cup(\frac{2\pi}{3},\pi]$, then the 
condition for $\cM_2$-convergence in Theorem~\ref{thm:JZus2} is satisfied.

\begin{figure}[htb]
\centerline{
\includegraphics[scale=.7]{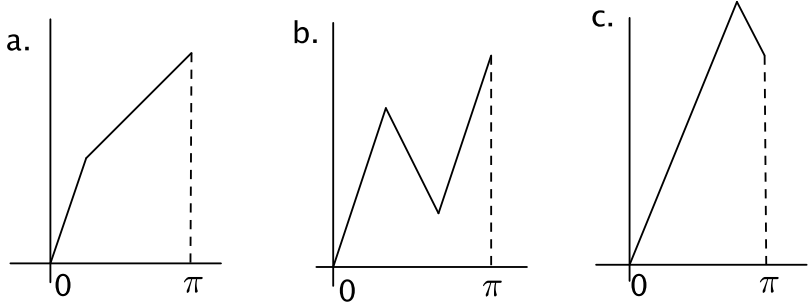}}
\caption{Different possible shapes of the function $I_v$ for
the Jung \& Zhang example:
(a)  WIP holds in the $\cM_1$ (hence also in the $\cM_2$) topology; 
(b) WIP holds in the $\cM_2$ topology but not in the $\cM_1$ topology;
(c) the WIP does not hold even in the $\cM_2$ topology.}
\label{fig:I}
\end{figure}
\end{example}

\begin{rmk}
(a) 
After writing this paper, we learned of independent work of~\cite{JungPeneZhangsub} on billiards with several cusps at flat points.  They considered the case where $v$ has constant sign near each cusp and proved convergence to a L\'evy process in the $\cM_1$ topology.
\\[.75ex]
(b) In a previous version of this paper, we conjectured that the condition in Theorem~\ref{thm:JZus2} for convergence in the $\cM_2$ topology is necessary and sufficient.  This has now been shown to be the case in~\cite{JMPVZ}.  An interesting open question is to consider alternative weaker modes of convergence in situations such as Figure~\ref{fig:I}(c) where $\cM_2$-convergence fails.
(Such a weakening entails diminishing the class of continuous functionals under which weak convergence is preserved.  For example, weak convergence in any of the Skorokhod topologies mentioned above implies weak convergence of the supremum process, i.e.\ $\sup_{[0,t]}W_n\to_w\sup_{[0,t]}W$, see~\cite[Section~13.4]{Whitt}, but this
appears unlikely in the situation of Figure~\ref{fig:I}(c).)
\end{rmk}

\paragraph{Strategy of proof}
The proof of Theorems~\ref{thm:JZus1} and~\ref{thm:JZus2} fits into a general framework~\cite{ChernovZhang05,Markarian04} which has been used to study large classes of examples from billiards specifically and nonuniformly hyperbolic dynamical systems in general.    This framework is described in Section~\ref{sec:prel}.
(It includes the setting of intermittent maps as a very special case, see Remark~\ref{rmk:PM}.)
Let $X\subset\Lambda$ be
a cross-section with first return time 
$\varphi:X\to\Z^+$ and first return map $f=T^\varphi:X\to X$ as in~\eqref{eq:first}.
In Example~\ref{ex:JZ},
$X=(\Gamma_3\cup\dots\cup \Gamma_{n_0})\times[0,\pi]$.   
We require that $f$ is modelled by a Young tower with exponential tails~\cite{Young98} over a ``uniformly hyperbolic'' subset $Y\subset X\subset\Lambda$.  
Associated to the observable $v:\Lambda\to\R$, we have the induced observable
$V=\sum_{\ell=0}^{\varphi-1}v\circ T^\ell:X\to\R$.
Also, associated to $\varphi$, $V$ on $X$ there are induced versions $\varphi^Y$, $V^Y$ on $Y$.

The key argument of~\cite[Theorem~3.1]{JungZhangsub} proves a stable law for
$\varphi:X\to\Z^+$.   Our approach deduces the WIP for $v$ on $\Lambda$ from the stable law for $\varphi$ on $X$.  The idea is to first induce the stable law for $\varphi$ to a stable law for $\varphi^Y$ on $Y$.  Since the dynamics on $Y$ is very well-understood, this leads via results of Gou\"ezel~\cite{Gouezel10b} and Tyran-Kami{\'n}ska~\cite{TyranKaminska10} to convergence to a L\'evy process in the $\cJ_1$ topology for $\varphi^Y$ and thereby $V^Y$.
The WIP for $V^Y$ uninduces to convergence in the $\cM_1$ topology for $V$ on $X$.  Under certain conditions, this uninduces to convergence in the $\cM_1$ or $\cM_2$ topology for $v$.  The strategy can be represented diagrammatically as follows:

\begin{table}[htb]
\centering
\begin{tabular}{ccccc}
$\Lambda$ && $X$ && $Y$  \\[.75ex] \hline \\
    && stable law for $\varphi$ & $\Longrightarrow$ & stable law for $\varphi^Y$  \\
&&&& $\Downarrow$ \\
    &&&& WIP in $\cJ_1$ for $\varphi^Y$  \\
&&&& $\Downarrow$ \\
     WIP in $\cM_1$ / $\cM_2$ for $v$ & $\Longleftarrow$ & 
WIP in $\cM_1$ for $V$  
& $\Longleftarrow$ & WIP in $\cJ_1$ for $V^Y$   
\\ \hline
\end{tabular}
\end{table}

The remainder of the paper is organized as follows.  In Section~\ref{sec:prel}, we consider the Chernov-Markarian-Zhang framework where the underlying system has a first return map modelled by a Young tower with exponential tails.
In Section~\ref{sec:results}, we state our main results on stable laws and WIPs for systems with a Chernov-Markarian-Zhang structure.
In Section~\ref{sec:Minduce}, we state and prove a purely probabilistic result on uninducing WIPs in the $\cM_1$ or $\cM_2$ topology, extending a result of~\cite{MZ15}.
Section~\ref{sec:An} contains limit laws for the return times $\varphi$ and
$\varphi^Y$, and Section~\ref{sec:moment} contains some estimates for induced H\"older observables.  These are combined in Section~\ref{sec:proof} to prove our main results from Section~\ref{sec:results}.
In Section~\ref{sec:JZ}, we return to Example~\ref{ex:JZ}, proving Theorems~\ref{thm:JZus1} and~\ref{thm:JZus2} as well as giving a streamlined proof of Theorem~\ref{thm:JZ}.

\paragraph{Notation}
We use the ``big $O$'' and $\ll$ notation interchangeably, writing $a_n=O(b_n)$ or $a_n\ll b_n$ if there is a constant $C>0$ such that
$a_n\le Cb_n$ for all $n\ge1$.  Also,
we write $a_n\approx b_n$ if $a_n\ll b_n\ll a_n$.
As usual, $a_n\sim b_n$ as $n\to\infty$ means that $\lim_{n\to\infty}a_n/b_n=1$.

For $a,b\in\R$, we write $a\wedge b=\min\{a,b\}$ and
$a\vee b=\max\{a,b\}$.

Recall that a sequence $b_n\in(0,\infty)$ is \emph{regularly varying of index 
$p>0$} if $b_{\lambda n}/b_n\to\lambda^p$ as $n\to\infty$ for all $\lambda\ge1$. 

\section{Preliminaries}
\label{sec:prel}

In this section, we recall the
Chernov-Markarian-Zhang framework~\cite{ChernovZhang05,Markarian04}.
Roughly speaking, this means that there is a convenient first return map that is modelled by
a Young tower with exponential tails~\cite{Young98}.
The full details from Young~\cite{Young98} are not required for our main theorems, so we recall here only those aspects that are needed.

\subsection{Towers and return maps}
\label{sec:tower}

In this subsection, we review a purely measure-theoretic framework of tower maps and return maps that arises throughout this paper.

Let $F:Y\to  Y$ be a measure-preserving transformation on a probability space
$(Y,\mu_Y)$, and let $\tau:Y\to\Z^+$ be integrable.
The tower $\Delta=Y^\tau$ and tower map $\hf:\Delta\to\Delta$ are given by
\begin{align} \label{eq:Delta}
\Delta=\{(y,\ell)\in Y\times\Z:0\le\ell< \tau(y)\}, \quad
\hf(y,\ell)=\begin{cases} 
(y,\ell+1) & \ell\le  \tau(y)-2 \\
(Fy,0) & \ell= \tau(y)-1 \end{cases}.
\end{align}
Define $\bar\tau=\int_Y\tau\,d\mu_Y$.  Then $\mu_\Delta=(\mu_Y\times{\rm counting})/\bar\tau$ is an $\hf$-invariant probability measure on $\Delta$.
We call $\hf:\Delta\to\Delta$ the {\em tower with base map $F$ and return time $\tau$}.

Next, let $f:X\to X$ be a measure-preserving transformation
on a probability space $(X,\mu_X)$,
and $Y\subset X$ a positive measure subset.
Let $\tau:Y\to\Z^+$ be measurable such that
$f^{\tau(y)}y\in Y$ for a.e.\ $y\in Y$; define $F=f^\tau:Y\to Y$.
Suppose that $\mu_Y$ is an $F$-invariant probability measure on $Y$ and that
$\tau$ is integrable with respect to $\mu_Y$.
Let $\hf:\Delta\to\Delta$ denote the tower with base map $F$ and return time $\tau$, and let $\pi:\Delta\to X$ be the semiconjugacy $\pi(y,\ell)=f^\ell y$.  Assume that $\mu_X=\pi_*\mu_\Delta$.
If all these assumptions are satisfied,
we call $\tau$ a {\em return time} and $F$ a {\em return map}.

\subsection{Young towers with exponential tails}
\label{sec:YT}

Let $f:X\to X$ be a measure-preserving transformation defined on a metric 
space $(X,d)$ with Borel probability measure $\mu_X$.   
Suppose that $Y$ is a positive measure subset of $X$ and that
$\tau:Y\to\Z^+$ is a return time with return map
$F=f^\tau:Y\to Y$.  
In particular, there is an $F$-invariant probability measure $\mu_Y$ on $Y$ such 
that $\tau$ is $\mu_Y$-integrable.
Let $\Delta=Y^\tau$ and $\hf:\Delta\to\Delta$ be the tower with base map $F$ and return time $\tau$ as in Subsection~\ref{sec:tower}
with $\hf$-invariant probability measure $\mu_\Delta$ and semiconjugacy $\pi:\Delta\to X$ such that $\mu_X=\pi_*\mu_\Delta$.
In addition, we assume that $\mu_Y$ and $\mu_\Delta$ (and hence $\mu_X$) are ergodic.
Moreover, we assume the exponential tails condition 
\begin{align}\label{eq:exptails}
\mu_Y(y\in Y:\tau(y)>n)=O(e^{-cn})  \quad\text{for some $c>0$}.
\end{align} 

Let $\cW^s$ be a cover of $Y$ by disjoint measurable subsets (called ``local stable leaves'') and
let $W^s_y$ denote the local stable leaf containing $y$.  
We require that $F(W^s_y)\subset W^s_{Fy}$ for all $y\in Y$.  
Let $\bar Y$ be the quotient space obtained from $Y$ by quotienting along local stable manifolds
and denote by $\bar\pi : Y \to \bar Y$ the corresponding projection.
The probability measure $\bar\mu_Y=\bar\pi_*\mu_Y$
is ergodic and invariant under the quotient map $\bar F:\bar Y\to\bar Y$, and
$\bar\pi$ defines a
measure-preserving semiconjugacy between $F$ and $\bar F$.

Let $\{a\}$ be an at most countable measurable partition of $\bar Y$.
Define $s(y,y')$ to be the least integer $n\ge0$ such that $F^ny$, $F^ny'$ lie in distinct partition elements.  It is assumed that $s(y,y')=\infty$ if and only if $y=y'$.  We require that $\bar F|_a:a\to\bar Y$ is a measurable bijection for all $a$ and that there are constants
$C>0$, $\theta\in(0,1)$ such that
\[
\SMALL |\log \frac{d\bar\mu_Y}{d\bar\mu_Y \circ \bar F}(y)-
\log \frac{d\bar\mu_Y}{d\bar\mu_Y \circ \bar F}(y')|\le C\theta^{s(y,y')}
\quad\text{for all $y,y'\in a$ and all $a$.}
\]
Under these conditions,
$\bar F:\bar Y\to\bar Y$ is called a {\em (full branch) Gibbs-Markov map}~\cite{AaronsonDenker01}.

We require that $\tau:Y\to\Z^+$ is constant on $\bar\pi^{-1}a$ for all $a$.
Hence 
$\tau$ is well-defined on $\bar Y$ and constant on partition elements.

Finally, assume that there are constants $C>0$, $\gamma_0\in(0,1)$ such that
\begin{align} \label{eq:Ws}
d(F^ny,F^ny')\le C\gamma_0^n
\quad\text{for all $y,y'\in Y$, $y'\in W^s_y$, $n\ge0$.}
\end{align}

Under these assumptions, we say that $f:X\to X$ is modelled by a {\em Young tower $\Delta=Y^\tau$ with exponential tails}.

\subsection{Chernov-Markarian-Zhang framework}
\label{sec:CMZ}

Let $T:\Lambda\to \Lambda$ be an ergodic measure-preserving transformation defined on a metric 
space $(\Lambda,d)$ with Borel probability measure $\mu$.   
Let $X\subset \Lambda$ be a Borel subset of positive measure and
define the {\em first return time}
$\varphi:X\to\Z^+$ and {\em first return map} $f=T^\varphi:X\to X$,
\begin{align} \label{eq:first}
\varphi(x)=\inf\{n\ge1:T^nx\in X\},
\qquad
f(x)=T^{\varphi(x)}x.
\end{align}
Then $\varphi$ is integrable and
$\mu_X=\mu|_X/\mu(X)$ is an ergodic $f$-invariant probability measure on $X$.
Define $\bar \varphi=\int_X\varphi\,d\mu_X$.

Next, we suppose that $f:X\to X$ is modelled by a Young tower $\Delta=Y^\tau$
with exponential tails as in Subsection~\ref{sec:YT}.
Define the {\em induced return time function}
\[
\varphi^Y:Y\to\Z^+, \qquad
\SMALL \varphi^Y=\sum_{\ell=0}^{\tau-1}\varphi\circ f^\ell.
\]
Assume that
$\varphi:X\to\Z^+$ is constant on $f^\ell\bar\pi^{-1}a$ for all
$0\le\ell<\tau(a)$ and all $a$.
Then
$\varphi^Y$ is well-defined on $\bar Y$ and constant on partition elements.

The final condition is somewhat technical and is based on~\cite[Lemma~5.4]{BalintGouezel06} which is itself based on~\cite[Sublemma, p.~612]{Young98}.
Given $h\in C^\eta(\Lambda)$, define
$H^Y=\sum_{\ell=0}^{\varphi^Y-1}h\circ T^\ell:Y\to\R$.
Let $\cB$ be the $\sigma$-algebra generated by $\cW^s$.
Then 
$\E(H^Y|\cB)=\zeta\circ \bar\pi$ where $\zeta\in L^1(\bar Y)$.
It is immediate that
\begin{align} \label{eq:zeta0}
|\zeta(y)|\le |h|_\infty\varphi^Y\!(a)
\quad\text{for all $y\in a$ and all $a$.}
\end{align}
We require that there are constants $C>0$, $\gamma_0\in(0,1)$ such that
\begin{align} \label{eq:zeta}
|\zeta(y)-\zeta(y')|\le C\varphi^Y\!(a)\gamma_0^{s(y,y')}
\quad\text{for all $y,y'\in a$ and all $a$.}
\end{align}
Under these assumptions, we say that $T:\Lambda\to\Lambda$ possesses a {\em Chernov-Markarian-Zhang structure}.

\begin{rmk}  The exponential tail condition for $\tau$ is assumed for convenience, but the abstract results require only that $\mu_Y(\tau>n)=O(n^{-q})$ for $q$ sufficiently large.
\end{rmk}

\begin{rmk}
The method of choosing a first return map modelled by a Young tower with exponential tails arises in various contexts in the literature, see for example~\cite{BruinLuzzattoStrien03,BruinTerhesiu18} in the noninvertible context.
However, the method plays a special role in the context of billiards as we now briefly recall.

Young~\cite{Young98} introduced Young towers with exponential tails as a general method for dealing with diffeomorphisms with singularities; the initial landmark application was to prove exponential decay of correlations for planar finite horizon dispersing billiards.  Chernov~\cite{Chernov99}  simplified the construction of exponential Young towers and used this to prove exponential decay of correlations for planar dispersing billiards with infinite horizon.
Then Young~\cite{Young99} studied examples with subexponential decay of correlations using Young towers with subexponential tails.  Markarian~\cite{Markarian04}, noting that Chernov's simplification no longer applies in the subexponential case, devised the method outlined in this section: namely to construct a first return map for which Chernov~\cite{Chernov99} applies.  This was used to prove the decay of correlations bound $O(1/n)$ for Bunimovich stadia.  The method was extended and simplified by
Chernov \& Zhang~\cite{ChernovZhang05}
who applied it to a large class of billiard examples.  Subsequent applications of the method include~\cite{ChernovMarkarian07,ChernovZhang05b} as well as Zhang~\cite{Zhang17a} who analysed the examples discussed in this paper.
\end{rmk}

\section{Statement of main results}
\label{sec:results}

Throughout this section, we suppose that $T:\Lambda\to\Lambda$ possesses a Chernov-Markarian-Zhang structure as in Section~\ref{sec:CMZ}, with first return map $f=T^\varphi:X\to X$ modelled by a Young tower with exponential tails.

For random elements $X_n$, $X$ taking values in a metric space, we write $X_n\to_w X$ if $\PP(X_n\in B)\to\PP(X\in B)$ for all Borel sets $B$ with
$\PP(X\in B)=0$.  When the metric space is $\R$, we write $\to_d$ instead of $\to_w$.  When the metric space is $\R$ and $X=0$, then this is equivalent to the simpler concept, convergence in probability, denoted $X_n\to_p0$.
For background on stable laws and L\'evy processes, we refer to~\cite{SamorodnitskyTaqqu}.

We assume that there exists $\alpha\in(1,2)$ such that
the first return time $\varphi:X\to\Z^+$ satisfies the limit law
\begin{align} \label{eq:stable}
\frac{1}{n^{1/\alpha}}\Big(\sum_{j=0}^{n-1}\varphi\circ f^j-n\int_X\varphi\,d\mu_X\Big)\to_d G \quad\text{on $(X,\mu_X)$},
\end{align}
where $G$ is an $\alpha$-stable law.
Since $\varphi\ge1$, this stable law
is totally skewed to the right.

Let $v:\Lambda\to\R$ be a H\"older observable with $\int_\Lambda v\,d\mu=0$.
Define the associated induced observable $V:X\to\R$ given by
$V(x)=\sum_{\ell=0}^{\varphi(x)-1}v(T^\ell x)$.
We assume that 
\begin{align} \label{eq:Lp}
V-I\varphi\in L^p(X)\quad\text{for some $I\neq 0$,\, $p>\alpha$}.
\end{align}
Define $v_n=\sum_{j=0}^{n-1}v\circ T^j:\Lambda\to\R$.

\begin{thm}[Stable law] \label{thm:stable}
Suppose that $T:\Lambda\to \Lambda$ possesses a Chernov-Markarian-Zhang structure and that
$v:\Lambda\to\R$ is a H\"older mean zero observable.
Assume~\eqref{eq:stable}  and~\eqref{eq:Lp}.  Then
$n^{-1/\alpha}v_n \to_d 
(\bar\varphi)^{-1/\alpha} I G$.
\end{thm}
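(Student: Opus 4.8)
The plan is to prove the stable law in two stages: first establish the $\alpha$-stable limit for the \emph{induced} observable $V$ on $(X,f,\mu_X)$, and then \emph{uninduce} it to $(\Lambda,T,v)$, the factor $(\bar\varphi)^{-1/\alpha}$ arising from the Kac rescaling of time. This is the standard inducing/uninducing mechanism for limit theorems, and the Chernov--Markarian--Zhang axioms (via the Young tower and its uniformly hyperbolic base $Y$) are precisely what make it run.

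\textbf{Stage 1 (stable law for $V$ on $X$).} Write $V=I\varphi+\psi$ with $\psi=V-I\varphi\in L^p(X)$, $p>\alpha$. By Kac's formula $\int_X\varphi\,d\mu_X=\bar\varphi$ and $\int_X V\,d\mu_X=\mu(X)^{-1}\int_\Lambda v\,d\mu=0$, so with $S^f_n g=\sum_{j=0}^{n-1}g\circ f^j$,
\[
S^f_n V=I\bigl(S^f_n\varphi-n\bar\varphi\bigr)+\bigl(S^f_n\psi-n{\textstyle\int_X}\psi\,d\mu_X\bigr).
\]
By~\eqref{eq:stable} the first term divided by $n^{1/\alpha}$ converges in distribution to $G$. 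For the second it suffices that $S^f_n\psi-n\int_X\psi\,d\mu_X=o_p(n^{1/\alpha})$; using~\eqref{eq:Lp} and the moment estimates for observables over a Young tower with exponential tails (cf.\ Section~\ref{sec:moment}), one has a Marcinkiewicz--Zygmund bound $S^f_n\psi-n\int_X\psi\,d\mu_X=o(n^{1/p})$ a.e., and $n^{1/p}=o(n^{1/\alpha})$ (for $p\ge2$ the law of the iterated logarithm already gives $O(\sqrt{n\log\log n})=o(n^{1/\alpha})$). Hence $n^{-1/\alpha}S^f_n V\to_d IG$ on $(X,\mu_X)$; because the base map is exponentially mixing this holds, moreover, with respect to every probability measure absolutely continuous to $\mu_X$, which is what Stage 2 requires. (Equivalently, and as in the paper's overall strategy, one may induce further to $Y$, obtain the $\mathcal J_1$ functional limit for $V^Y$ there from the stable law for $\varphi^Y$ via~\cite{Gouezel10b,TyranKaminska10}, and uninduce from $Y$; we use the version based at $X$ for brevity.)

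\textbf{Stage 2 (uninducing to $\Lambda$).} Identify $\Lambda$, modulo $\mu$-null sets, with the tower $X^\varphi$: every $y$ equals $T^i x$ for a unique $x\in X$, $0\le i<\varphi(x)$, with $\mu=(\mu_X\times\mathrm{counting})/\bar\varphi$. Given $y\in\Lambda$, let $\rho(y)<\infty$ be the time until the next visit to $X$ and $\tilde x(y)=T^{\rho(y)}y\in X$. For $n\ge\rho(y)$,
\[
v_n(y)=\sum_{j=0}^{\rho(y)-1}v(T^j y)\;+\;S^f_{N(n,y)}V(\tilde x(y))\;+\;R_n(y),
\]
where $N(n,y)$ is the number of excursions of $\tilde x(y),f\tilde x(y),\dots$ completed by time $n$ and $R_n(y)$ is the partial sum of $v$ over the incomplete final excursion, so that $|R_n(y)|\le|v|_\infty\,a_n(y)$ with $a_n(y)$ the age at time $n$. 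The first term is bounded by $|v|_\infty\rho(y)$, a fixed a.e.-finite random variable, hence $o_p(n^{1/\alpha})$. For $R_n$: by $T$-invariance $a_n$ has the fixed law of the age function, and the tower representation gives $\mu(a_n\ge m)=\bar\varphi^{-1}\E_{\mu_X}[(\varphi-m)^+]\to0$ as $m\to\infty$ since $\varphi\in L^1$; taking $m=\epsilon n^{1/\alpha}$ yields $n^{-1/\alpha}R_n\to_p0$. Finally $N(n,\cdot)/n\to\bar\varphi^{-1}$ a.e.\ by Birkhoff and Kac, and combining this with the (strong distributional) stable law for $S^f_k V$ by a random-index argument — most transparently by applying the continuous mapping theorem to the $(D[0,\infty),\mathcal M_1)$ convergence $\bigl(t\mapsto n^{-1/\alpha}S^f_{[nt]}V\bigr)\to_w IW$ composed with the time change $t\mapsto N([nt],\cdot)/n\to t/\bar\varphi$, and using the self-similarity $W(c\,\cdot)=_d c^{1/\alpha}W(\cdot)$ of the limiting $\alpha$-stable L\'evy process $W$ — gives $n^{-1/\alpha}S^f_{N(n,\cdot)}V(\tilde x(\cdot))\to_d(\bar\varphi)^{-1/\alpha}IG$. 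Adding the three pieces, $n^{-1/\alpha}v_n\to_d(\bar\varphi)^{-1/\alpha}IG$ on $(\Lambda,\mu)$.

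\textbf{Main obstacle.} The delicate feature is that one is uninducing a \emph{stable} law, not a CLT. For a CLT the incomplete-excursion term and the random time change are harmless; here they are genuine. Controlling $R_n$ crucially uses $\varphi\in L^1$, i.e.\ $\alpha>1$: for $\alpha\le1$ one typically sits, at a fixed large time $n$, inside an excursion of length comparable to $n^{1/\alpha}$, so that $R_n$ survives at the limit scale and the statement fails. The random-index step must likewise be handled carefully — the fluctuations of $N(n,\cdot)$ around $n/\bar\varphi$ have to be shown asymptotically irrelevant — which is why it is cleanest to pass through the functional limit (or to invoke the abstract uninducing result of the paper). The secondary technical ingredient, the $L^p$ Marcinkiewicz--Zygmund estimate for $\psi=V-I\varphi$, genuinely requires $p>\alpha$ together with the Young-tower structure.
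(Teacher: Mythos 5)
Your overall architecture (split $V=I\varphi+(V-I\varphi)$, feed the main term into~\eqref{eq:stable}, kill the remainder, then uninduce with the Kac factor $(\bar\varphi)^{-1/\alpha}$) is the right one and matches the paper's in spirit, but the crux of the theorem is precisely your Stage~1 error estimate, and the justification you give for it does not work. You claim that $S^f_n\psi-n\int_X\psi\,d\mu_X=o(n^{1/p})$ a.e.\ (or via an LIL) for $\psi=V-I\varphi$ merely because $\psi\in L^p(X)$ and $f$ is modelled by a Young tower. No such Marcinkiewicz--Zygmund or LIL bound holds for an arbitrary $L^p$ observable of an ergodic map --- Birkhoff gives only $o(n)$, with no rate --- and the moment estimates of Section~\ref{sec:moment} are not stated for observables on $X$: Lemma~\ref{lem:mart} controls the \emph{doubly induced} observable $\tH^Y$ on $Y$, where $\bar F$ is Gibbs--Markov, and its proof uses the H\"older regularity of $h=v-I$ on $\Lambda$ through~\eqref{eq:Ws} and the structural conditions~\eqref{eq:zeta0}--\eqref{eq:zeta} (to run the Gordin/Burkholder martingale--coboundary argument). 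That is why the paper proves the stable law for $V^Y$ on $Y$ (combining Lemma~\ref{lem:mart} with Lemma~\ref{lem:An}(a)) and never works with $\psi$ on $X$ directly. Your parenthetical ``equivalently, induce further to $Y$'' is in fact the only route available; as the primary argument it is missing, not optional.

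Two further points on Stage~2. First, you cannot ``invoke the abstract uninducing result of the paper'' (Theorem~\ref{thm:induce}) for the passage $X\to\Lambda$ as stated: its hypothesis $b_n^{-1}(\tau_n-n\bar\tau)\to_p0$ fails for the return time $\varphi$, since $n^{-1/\alpha}(\varphi_n-n\bar\varphi)\to_d G$ is nondegenerate. One must use the tightness version of Gou\"ezel's Theorem~A.1 recorded in Remark~\ref{rmk:induce} (this is exactly how the paper uninduces, from $Y$ to $\Lambda$ with return time $\varphi^Y$, using Lemma~\ref{lem:An}(b) for tightness). Second, your hands-on random-index argument leans on the functional $\cM_1$ convergence of $t\mapsto n^{-1/\alpha}S^f_{[nt]}V$ on $X$; that is Lemma~\ref{lem:XM1}, a strictly stronger statement than the stable law whose proof requires the whole inducing machinery again, so as written the step is unearned. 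The treatment of the initial segment and of the incomplete final excursion $R_n$ via the age function is fine and correctly isolates where $\alpha>1$ (integrability of $\varphi$) enters.
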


Next, let $W\in D[0,\infty)$ be the $\alpha$-stable L\'evy process with $W(1)=_d G$.
Define $W_n:\Lambda\to D[0,\infty)$ by $W_n(t)=n^{-1/\alpha}v_{[nt]}$.

Define $M_1,\,M_2:X\to[0,\infty)$,
\begin{align*}
M_1 & = \max_{1\le\ell'\le\ell\le \varphi}  (v_{\ell'}-v_\ell)
	\wedge \max_{1\le\ell'\le\ell\le \varphi}  (v_\ell-v_{\ell'}),
\\
M_2 & = \Big\{\max_{0 \le\ell\le \varphi}   (-v_\ell)
	+ \max_{0 \le\ell\le \varphi}   (v_\ell-V) \Big\} 
	 \wedge 
	\Big\{\max_{0 \le\ell\le \varphi}    v_\ell 
	+ \max_{0 \le\ell\le \varphi}  (V-v_\ell) \Big\}.
\end{align*}
Note that  $M_1=0$ if and only if excursions between returns to $X$ are monotone~\cite{MZ15}, and $M_2=0$ if and only if excursions starting at 
$x\in X$ remain between $0$ and $V(x)$.

\begin{thm}[WIP] \label{thm:levy}
Suppose that $T:\Lambda\to \Lambda$ possesses a Chernov-Markarian-Zhang structure and that
$v:\Lambda\to\R$ is a H\"older mean zero observable.
Assume~\eqref{eq:stable}  and~\eqref{eq:Lp}.  
\begin{itemize}
\item[(a)] If
$n^{-1/\alpha}\max_{j\le n}M_1\circ f^j\to_p0$
on $(X,\mu_X)$,
then $W_n\to_w (\bar\varphi)^{-1/\alpha}I W$ 
on $(\Lambda,\mu)$ in $(D[0,\infty),\cM_1)$.
\item[(b)] If
$n^{-1/\alpha}\max_{j\le n}M_2\circ f^j\to_p0$
on $(X,\mu_X)$,
then $W_n\to_w (\bar\varphi)^{-1/\alpha} I W$
on $(\Lambda,\mu)$ in $(D[0,\infty),\cM_2)$.
\end{itemize}
\end{thm}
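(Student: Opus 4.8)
\noindent\emph{Proof strategy.}
The plan is to transport a functional limit law down the tower, following the diagram in the Strategy paragraph. \emph{Step 1 (induce the stable law to $Y$).} Since $\sum_{j=0}^{n-1}\varphi^Y\circ F^j=\sum_{i=0}^{\tau_n-1}\varphi\circ f^i$ with $\tau_n=\sum_{j=0}^{n-1}\tau\circ F^j\sim\bar\tau n$ a.e.\ (Birkhoff) and $\int_Y\varphi^Y\,d\mu_Y=\bar\tau\bar\varphi$ (Kac), hypothesis~\eqref{eq:stable} together with the exponential tails~\eqref{eq:exptails} for $\tau$ gives the $\alpha$-stable law $n^{-1/\alpha}\bigl(\sum_{j=0}^{n-1}\varphi^Y\circ F^j-n\bar\tau\bar\varphi\bigr)\to_d\bar\tau^{1/\alpha}G$ on $(Y,\mu_Y)$; this return-time limit law is treated in Section~\ref{sec:An}. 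As $\varphi^Y$ is constant on stable leaves it descends to a full-branch Gibbs--Markov observable on $\bar Y$, and there the stable law upgrades to a functional limit law in $\cJ_1$: the anti-clustering criterion of Tyran-Kami\'nska~\cite{TyranKaminska10} (ruling out two consecutive large values of $\varphi^Y$) holds because each partition element maps onto all of $\bar Y$, and the L\'evy limit is identified via Gou\"ezel~\cite{Gouezel10b}. Hence $n^{-1/\alpha}\sum_{j=0}^{[nt]-1}(\varphi^Y-\bar\tau\bar\varphi)\circ F^j\to_w\bar\tau^{1/\alpha}W$ in $(D[0,\infty),\cJ_1)$ on $(Y,\mu_Y)$.

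\emph{Step 2 (pass to $V^Y$).} Inducing~\eqref{eq:Lp} gives $V^Y-I\varphi^Y=\sum_{k=0}^{\tau-1}(V-I\varphi)\circ f^k$, which combined with the exponential tails of $\tau$ lies in $L^{p'}(Y)$ for every $p'<p$ (an induced-observable estimate of the type in Section~\ref{sec:moment}; the conditions~\eqref{eq:zeta0}--\eqref{eq:zeta} reduce $V^Y$, modulo a bounded coboundary, to a Gibbs--Markov observable on $\bar Y$). By Kac's formula $\int_XV\,d\mu_X=\mu(X)^{-1}\int_\Lambda v\,d\mu=0$, so $\int_YV^Y\,d\mu_Y=0$ and $V^Y=I(\varphi^Y-\bar\tau\bar\varphi)+\psi$ with $\psi$ mean zero in $L^{p'}(Y)$, $p'>\alpha$. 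A maximal inequality for Gibbs--Markov maps gives $n^{-1/\alpha}\max_{k\le n}\bigl|\sum_{j=0}^{k-1}\psi\circ F^j\bigr|\to_p0$, so by asymptotic negligibility $n^{-1/\alpha}\sum_{j=0}^{[nt]-1}V^Y\circ F^j\to_w I\bar\tau^{1/\alpha}W$ in $(D[0,\infty),\cJ_1)$ on $(Y,\mu_Y)$.

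\emph{Step 3 (uninduce twice via Section~\ref{sec:Minduce}).} First from $Y$ to $X$: the Birkhoff-sum process of $V$ under $f$ is that of Step~2 run on the random clock $k\mapsto\sum_{j=0}^{k-1}\tau\circ F^j$, and within one $\tau$-block $\sum_{k<m}V\circ f^k=I\sum_{k<m}\varphi\circ f^k+\sum_{k<m}(V-I\varphi)\circ f^k$ is monotone in $m$ (as $\varphi\ge1$, $I\neq0$) up to a fluctuation $\le2\sum_{k=0}^{\tau-1}|(V-I\varphi)\circ f^k|$ whose running maximum over the first $n$ blocks is $o(n^{1/\alpha})$ in probability by the $L^{p'}$ bound with $p'>\alpha$; thus the monotonicity-defect hypothesis of the uninducing lemma of Section~\ref{sec:Minduce} holds automatically and $n^{-1/\alpha}\sum_{j=0}^{[nt]-1}V\circ f^j$ converges in $(D[0,\infty),\cM_1)$ on $(X,\mu_X)$. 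Now uninduce from $X$ to $\Lambda$: $W_n$ is the $V$-process on $X$ run on the random clock $k\mapsto\sum_{j=0}^{k-1}\varphi\circ f^j$ of rate $\bar\varphi$, and within one $\varphi$-block the excursion $m\mapsto v_m$ has monotonicity defect, resp.\ set-distance defect from the segment $[0,V]$, governed precisely by $M_1$, resp.\ $M_2$. Hence hypothesis (a) supplies the defect hypothesis of the Section~\ref{sec:Minduce} lemma in its $\cM_1$ form, and (b) in its $\cM_2$ form; self-similarity of the $\alpha$-stable L\'evy process converts the clock rate $\bar\varphi$ into the factor $(\bar\varphi)^{-1/\alpha}$ (the auxiliary mean $\bar\tau$ cancelling between Steps~2 and~3), and passing from $(X,\mu_X)$ to $(\Lambda,\mu)$ costs only a lower-order initial segment before the a.e.\ finite first entrance time to $X$. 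This yields $W_n\to_w(\bar\varphi)^{-1/\alpha}I\,W$ on $(\Lambda,\mu)$, in $(D[0,\infty),\cM_1)$ under (a) and in $(D[0,\infty),\cM_2)$ under (b).

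\emph{Main obstacle.} The crux is the abstract uninducing result of Section~\ref{sec:Minduce} and its two applications: one must relate the Skorohod topology of a process obtained by a tight random time change to the within-block excursion defects, using the oscillation-function characterizations of $\cM_1$ and $\cM_2$ from Appendix~\ref{app:topologies}, controlling the finitely many dominant jumps in a compact interval and their separation, and --- the genuinely new point compared with~\cite{MZ15} --- handling the $\cM_2$ regime in which excursions overshoot $[0,V]$. A secondary technical point is the verification of Tyran-Kami\'nska's anti-clustering condition for $\varphi^Y$ on the Gibbs--Markov base in Step~1.
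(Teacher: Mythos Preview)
Your proposal is correct and follows essentially the same route as the paper: induce the stable law for $\varphi$ from $X$ to $Y$ (Lemma~\ref{lem:An}), upgrade to a $\cJ_1$ functional limit via Gou\"ezel/Tyran-Kami\'nska, pass to $V^Y$ by controlling the remainder with the moment/maximal estimate of Lemma~\ref{lem:mart}, then uninduce twice via Theorem~\ref{thm:M1induce}---first $Y\to X$ (Lemma~\ref{lem:XM1}) using that $I\varphi$ is monotone and the fluctuation $H^*=\sum_{\ell<\tau}|V-I\varphi|\circ f^\ell$ lies in $L^{p'}$, $p'>\alpha$, then $X\to\Lambda$ using the hypotheses on $M_i$. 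One minor imprecision: in your Step~2 the reduction does not give a bounded coboundary to a Gibbs--Markov observable but rather an $L^p$ martingale-coboundary decomposition via~\eqref{eq:zeta0}--\eqref{eq:zeta} (this is what Lemma~\ref{lem:mart} does), though the conclusion and the ingredients you cite are correct.
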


The theorem asserts that whenever excursions satisfy a mild monotonicity condition ($n^{-1/\alpha}\max_{j\le n}M_1\circ f^j\to_p0$),
or lie within a controlled distance from its endpoints ($n^{-1/\alpha}\max_{j\le n}M_2\circ f^j\to_p0$), then we obtain the WIP
in the $\cM_1$ or $\cM_2$ topology respectively. 

\begin{rmk} \label{rmk:SDC}
Let $(\Omega,\PP)$ be a probability space and $R_n:\Omega\to S$ a
sequence of Borel measurable maps where $S$ is a metric space.
{\em Strong distributional convergence} of $R_n$ to a random element $R$
on $(\Omega,\PP)$ means that $R_n\to_w R$ in $S$ on the probability 
space $(\Omega,\PP')$ for all probability measures $\PP'\ll \PP$.

In the context of Theorem~\ref{thm:levy}, strong distributional convergence 
on $(\Lambda,\mu)$ is automatic.  
Let $T$ be an ergodic measure-preserving transformation on a probability space $(\Lambda,\mu)$ and let $\mu'$ be an absolutely continuous probability measure.  Based on ideas of~\cite{Eagleson76},
it was shown in~\cite[Theorem~1 and Corollary~3]{Zweimuller07} that 
distributional convergence in $(D[0,\infty),\cJ_1)$ holds on $(\Lambda,\mu)$ if and only if it holds on $(\Lambda,\mu')$.
Hence
distributional convergence in $D[0,\infty)$ with the $\cJ_1$ topology on $(\Lambda,\mu)$ is equivalent to strong distributional convergence.  As pointed out in~\cite[Proposition~2.8]{MZ15}, this carries over immediately to weaker topologies on $D[0,\infty)$
such as $\cM_1$ and $\cM_2$.
\end{rmk}

\begin{rmk}  A more concise formula for $M_2$ can be obtained by noting that
\begin{align*}
M_2 & = 
  \Big\{-\min_{0 \le\ell\le \varphi}   v_\ell
	+ \max_{0 \le\ell\le \varphi}   v_\ell-V \Big\} 
	 \wedge 
	\Big\{\max_{0 \le\ell\le \varphi}   v_\ell 
	+ V- \min_{0 \le\ell\le \varphi}  v_\ell \Big\}
\\ & = \max_{0 \le\ell\le \varphi}   v_\ell - \min_{0 \le\ell\le \varphi}   v_\ell - |V|.
\end{align*}
\end{rmk}

The next result, proved in Section~\ref{sec:An}, extends and significantly improves~\cite[Proposition~2.7]{MZ15}.

\begin{prop} \label{prop:dom}
Let $i\in\{1,2\}$.  Suppose that there are constants 
$C>0$, $\delta\in(0,1)$ such that
 $M_i\le C\varphi^\delta$ almost everywhere.
 Then the assumption on $M_i$ in Theorem~\ref{thm:levy} is satisfied.
\end{prop}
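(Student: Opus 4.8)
\emph{Strategy.} The point is that the exponent $\delta<1$ lets us replace the normalization $n^{1/\alpha}$ appearing in Theorem~\ref{thm:levy}, which is too small for a crude argument, by the larger normalization $n^{1/(\alpha\delta)}$; against the latter, the regularly varying tail of $\varphi$ implied by~\eqref{eq:stable} is easily controlled by a union bound. The argument is the same for $i=1$ and $i=2$ since only the bound $M_i\le C\varphi^\delta$ is used.

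\emph{Reduction.} Since $t\mapsto t^\delta$ is increasing on $[0,\infty)$ and $n^{-1/\alpha}=\big(n^{-1/(\alpha\delta)}\big)^\delta$, the hypothesis $M_i\le C\varphi^\delta$ a.e.\ gives, for all $n\ge1$,
\[
n^{-1/\alpha}\max_{j\le n}M_i\circ f^j\;\le\;C\Big(n^{-1/(\alpha\delta)}\max_{j\le n}\varphi\circ f^j\Big)^{\!\delta}.
\]
As $\delta>0$, it therefore suffices to prove that $n^{-1/(\alpha\delta)}\max_{j\le n}\varphi\circ f^j\to_p0$ on $(X,\mu_X)$. Since $\delta<1$ we may fix once and for all an exponent $\alpha'$ with $\alpha\delta<\alpha'<\alpha$.

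\emph{Tail estimate and union bound.} By~\eqref{eq:stable}, $\varphi$ lies in the domain of attraction of an $\alpha$-stable law with normalizing sequence $n^{1/\alpha}$; by the analysis of the return times in Section~\ref{sec:An} (equivalently, by standard stable-law theory), $\mu_X(\varphi>t)$ is regularly varying of index $-\alpha$, so in particular $\mu_X(\varphi>t)\ll t^{-\alpha'}$. Hence, for fixed $\eps>0$, $f$-invariance of $\mu_X$ together with a union bound gives
\[
\mu_X\Big(\max_{j\le n}\varphi\circ f^j>\eps\,n^{1/(\alpha\delta)}\Big)\le\sum_{j=0}^{n}\mu_X\big(\varphi\circ f^j>\eps\,n^{1/(\alpha\delta)}\big)=(n+1)\,\mu_X\big(\varphi>\eps\,n^{1/(\alpha\delta)}\big)\ll\eps^{-\alpha'}n^{\,1-\alpha'/(\alpha\delta)}.
\]
Since $\alpha'>\alpha\delta$, the exponent $1-\alpha'/(\alpha\delta)$ is negative, so the right-hand side tends to $0$ as $n\to\infty$ for each fixed $\eps>0$. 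This proves $n^{-1/(\alpha\delta)}\max_{j\le n}\varphi\circ f^j\to_p0$, and hence the proposition.

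\emph{Main obstacle.} The only substantive ingredient is the regular variation of the tail $\mu_X(\varphi>t)$, which is where the Chernov-Markarian-Zhang hypotheses and the particular choice of normalization in~\eqref{eq:stable} enter; everything else is the elementary observation that $\delta<1$ supplies a normalization large enough to absorb the factor $n+1$. Note that any weaker tail bound $\mu_X(\varphi>t)=O(t^{-\alpha'})$ for some $\alpha'\in(\alpha\delta,\alpha)$ would suffice, so the argument is robust; I expect the mildly delicate step to be the passage from the distributional limit~\eqref{eq:stable} to regular variation of the marginal tail of $\varphi$, but in the tower setting of Section~\ref{sec:prel} this is routine.
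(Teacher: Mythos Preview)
Your argument is correct and takes a genuinely different route from the paper. The paper proceeds via Corollary~\ref{cor:An}: it first establishes the full $\cM_1$ WIP for the centred return time process $A_n$ (Lemma~\ref{lem:An}(d)), then applies the continuous mapping theorem with the supremum functional to deduce that $n^{-1/\alpha}\max_{j\le n}|\tvarphi|\circ f^j$ converges in distribution, and hence that $n^{-1/(\alpha\delta)}\max_{j\le n}|\tvarphi|\circ f^j\to_p0$. Your approach is more elementary: you bypass the WIP entirely and use only the moment bound $\varphi\in L^{\alpha'}(X)$ for $\alpha'<\alpha$ (Remark~\ref{rmk:intphi}), combined with Markov's inequality and a union bound. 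This is cleaner and more self-contained, at the cost of not exploiting machinery the paper has already built.

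One point of exposition: your claim that $\mu_X(\varphi>t)$ is regularly varying of index $-\alpha$ is not actually established in Section~\ref{sec:An}. What is shown there (via Gou\"ezel's result applied to the Gibbs--Markov map $\bar F$) is regular variation of the tail of $\tvarphi^Y$ on $\bar Y$, and then only the integrability $\varphi\in L^q(X)$ for $q<\alpha$ is pushed back to $X$ (Remark~\ref{rmk:intphi}). As you correctly note in your final paragraph, the weaker bound $\mu_X(\varphi>t)=O(t^{-\alpha'})$ for any $\alpha'\in(\alpha\delta,\alpha)$ suffices, and this follows directly from $L^{\alpha'}$-integrability via Markov's inequality. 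So the proof stands, but you should cite Remark~\ref{rmk:intphi} rather than assert regular variation.
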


In Section~\ref{sec:JZ}, we require the following converse result for the $\cM_1$ topology. (There  is no such converse result for $\cM_2$.)   

\begin{prop} \label{prop:converse}
If $W_n\to_w IW$ in $(D[0,\infty),\cM_1)$ for some constant $I\neq0$,
then $n^{-1/\alpha}\max_{0\le j\le n}M_1\circ f^j\to_p0$.
\end{prop}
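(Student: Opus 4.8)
The plan is to contrapose: assuming that $n^{-1/\alpha}\max_{0\le j\le n}M_1\circ f^j$ does \emph{not} converge to $0$ in probability, I will exhibit a continuous functional $g:(D[0,\infty),\cM_1)\to\R$ (or rather a family of such functionals distinguishing limits) on which $W_n$ cannot converge to $IW$. The key geometric fact is that $M_1(x)>0$ precisely measures the largest ``reversal'' within the excursion over a single return to $X$: there are indices $1\le\ell'\le\ell\le\varphi(x)$ (or the reverse order) with $v_\ell(x)-v_{\ell'}(x)$ of size $M_1(x)$, so the path $t\mapsto v_{[nt]}$ contains, within a time window of length $\varphi(x)/n\to 0$, both an up-move and a down-move each of size $\gtrsim M_1(x)/n^{1/\alpha}$. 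In $\cM_1$ (unlike $\cJ_1$) a cluster of increments collapsing to a point is allowed to approximate a jump, but only a \emph{monotone} one: the $\cM_1$ limit of such a window is a single jump equal to the net displacement, so an extra oscillation of size $\delta n^{-1/\alpha}$ that does not shrink is incompatible with $\cM_1$-convergence to a pure-jump L\'evy path $IW$, whose jumps are (almost surely) approached monotonically.

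Concretely, first I would recall from the theory of the $\cM_1$ topology (Whitt~\cite{Whitt}, and as used in~\cite{MZ15}) that $W_n\to_w IW$ in $\cM_1$ forces, for each fixed $t$ and each $\eps>0$, a tightness/oscillation control: the modulus $w_s(W_n,\delta,t)$ built from the parametric representation (distance of sample points to the completed graph) is small with high probability for small $\delta$. The Skorohod-space criterion says that if a path has, in an arbitrarily short time interval, an ``overshoot'' of size bounded away from $0$ relative to the chord joining the endpoints of that interval, then it is bounded away in $\cM_1$ from any path whose graph is monotone across that interval; since $IW$ has only countably many jumps and is otherwise locally of small oscillation, its graph restricted to a short interval around a non-jump time has small oscillation, and around a jump time the completed graph is the vertical segment, which is monotone. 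So an $\cM_1$-limit rules out persistent non-monotone overshoots of macroscopic ($n^{-1/\alpha}$) size.

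Second, I would translate the failure of $n^{-1/\alpha}\max_{j\le n}M_1\circ f^j\to_p0$ into exactly such a persistent overshoot. By definition of $M_1$ and of the first-return structure, for each $n$ let $j=j(n,x)\le n$ realize the max; the excursion of $v_{[n\,\cdot\,]}$ over the block of time indices $[\sigma_j(x),\sigma_{j+1}(x))$ (where $\sigma_j=\sum_{i<j}\varphi\circ f^i$) has, somewhere inside, two times at which the path differs from the value at a third intermediate time by at least $M_1(f^jx)/n^{1/\alpha}$ on \emph{opposite sides}; this block has length $\varphi(f^jx)/n$, which is $o(1)$ in probability because $\varphi\in L^1$ (indeed has a stable law) so $n^{-1}\max_{j\le n}\varphi\circ f^j\to_p0$. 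Hence on an event of non-vanishing probability there is a vanishingly short time interval on which $W_n$ overshoots its chord by a fixed amount $\delta>0$ — contradicting the $\cM_1$-oscillation bound forced by $W_n\to_w IW$. Here I would be careful to pass from convergence on $(X,\mu_X)$ to convergence on $(\Lambda,\mu)$ and back using the first-return correspondence and Kac's formula, exactly as the strong-distributional-convergence remark (Remark~\ref{rmk:SDC}) permits.

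The main obstacle I anticipate is making the ``overshoot forbids $\cM_1$-convergence'' step quantitative and uniform: one must choose the right continuous functional (or the right Borel sets $B$ with $\PP(IW\in\partial B)=0$) so that the persistent overshoot registers despite the fact that $W$ itself has genuine jumps of all sizes, and one must ensure the overshoot window does not, with non-negligible probability, sit on top of a genuine large jump of the limit (where a large oscillation would be ``allowed''). This is handled by noting that large jumps of $W$ occur at finitely many times in $[0,t]$ and have a continuous joint law with the limiting path, so conditioning on avoiding a $\kappa$-neighborhood of those times — an event of probability $\to1$ as $\kappa\to0$ — isolates the spurious overshoot on a region where the target path has small oscillation; the rest is the standard $\cM_1$ Portmanteau argument. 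The bookkeeping of constants ($\delta$, $\kappa$, the probability of the bad event) and the precise form of the $\cM_1$ oscillation modulus are routine once this structure is in place.
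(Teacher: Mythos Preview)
Your approach shares the right geometric intuition with the paper --- that persistent non-monotone oscillations of $W_n$ over vanishingly short time windows obstruct $\cM_1$-convergence --- but you miss the key structural simplification that makes the argument clean, and your final worry about conditioning away from jump times of $W$ indicates a confusion that could stall the proof.

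The crucial observation you do not use is that $G$ is \emph{totally skewed}: the L\'evy measure of $W$ is supported on $(0,\infty)$, so $IW$ has jumps of one sign only. The paper exploits this by bounding
\[
M_1 = \Big(\max_{\ell'\le\ell\le\varphi}(v_{\ell'}-v_\ell)\Big)\wedge\Big(\max_{\ell'\le\ell\le\varphi}(v_\ell-v_{\ell'})\Big)
\;\le\;
\max_{\ell'\le\ell\le\varphi}(v_{\ell'}-v_\ell),
\]
i.e.\ by the largest \emph{downward} increment within an excursion. This in turn is bounded by $\max^*\bigl(W_n(t')-W_n(t)\bigr)$ over pairs $t'<t$ lying in a window of length $n^{-1}\max_{j\le n}\varphi\circ f^j\to0$. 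Since $IW$ has no negative jumps at all, the event $E_\delta=\{u:u(t)-u(t')<-c\text{ for some }0\le t'<t<t'+\delta\}$ has limiting probability zero, and $\cM_1$-convergence then forces $\mu(W_n\in E_\delta)<\eps$ for large $n$. No conditioning on jump locations is needed: the one-sidedness of the jumps makes every short window of $IW$ automatically free of macroscopic downward moves, whether or not it contains a (positive) jump.

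By contrast, your argument tries to detect a two-sided overshoot relative to the chord and then worries that this could be ``allowed'' near a genuine jump of the limit. That concern is misplaced --- even at a jump, the completed graph of $IW$ is a monotone vertical segment, and a non-monotone excursion of $W_n$ would still be $\cM_1$-far from it --- but more importantly it is unnecessary once you use the skewness. Your route could in principle be pushed through, but it requires a careful translation of the condition $M_1>\delta$ into a lower bound on the Whitt $\cM_1$ oscillation modulus $w_s(W_n,\cdot)$, which is not entirely immediate from the definition of $M_1$ as a minimum. The paper's one-sided bound sidesteps this entirely.
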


\begin{proof}  
Without loss, $I=1$.
Fix $c>0$.  
Define $\Delta W(t)=W(t)-W(t_-)$.  
The stable law $G$ is totally skewed with L\'evy measure supported in $(0,\infty)$, so  
$\PP\{\Delta W(t)<-c\;\text{for some $0\le t\le 2\bar\varphi$}\}=0$.

For $\delta>0$, define
\[
E_\delta=\{u\in D[0,2\bar\varphi]:u(t)-u(t')<-c
\quad\text{for some $0\le t'<t<(t'+\delta)\wedge 2\bar\varphi$}\}.
\]
Since $W_n\to_w W$ in $\cM_1$, for any $\eps>0$ there exists $\delta>0$, $n_0\ge1$ such that
\[
\mu(W_n\in E_\delta)<\eps
\quad\text{for $n\ge n_0$.}
\]

Let $\varphi_n=\sum_{j=0}^{n-1}\varphi\circ f^j$.
Since $\varphi$ is integrable, it follows from the ergodic theorem that $n^{-1}\varphi_n\to\bar\varphi$ a.e.\ and so
$n^{-1}\varphi\circ f^n\to0$ a.e.  
It follows easily that $n^{-1}\max_{j\le n}\varphi\circ f^j\to0$ a.e.  
Hence there exists $n_1\ge n_0$ such that
\[
\mu\big(n^{-1}\max_{j\le n}\varphi\circ f^j\ge \delta\big)+
 \mu(n^{-1}\varphi_n\ge 2\bar\varphi)<\eps
\quad\text{for $n\ge n_1$.}
\]

Now,
\begin{align*}
n^{-1/\alpha}\max_{j\le n}M_1\circ f^j
& \le 
n^{-1/\alpha}\max_{j\le n}\max_{0\le\ell'<\ell<\varphi\circ f^j}(v_{\ell'}-v_{\ell})\circ f^j
 \le {\max}^* (W_n(t')-W_n(t)),
\end{align*}
where $\max^*$ is the maximum over
$0\le t'<t<(t'+n^{-1}\max_{j\le n}\varphi\circ f^j)\wedge n^{-1}\varphi_n$.

It follows that for $n\ge n_1$,
\begin{align*}
 & \mu\big\{n^{-1/\alpha}\max_{j\le n}M_1\circ f^j>c\big\}   \le 
 \mu\big\{{\max}^*(W_n(t')-W_n(t))>c\big\}  
\\ & \le 
\mu(n^{-1}\varphi_n\ge 2\bar\varphi)+\mu\big(n^{-1}\max_{j\le n}\varphi\circ f^j\ge \delta\big) +
 \mu\Big(\max_{0\le t'<t<(t'+\delta)\wedge 2\bar\varphi}(W_n(t')-W_n(t))> c\Big)
\\ & <\eps+\mu(W_n\in E_\delta)<2\eps.
\end{align*}
Hence $n^{-1/\alpha}\max_{j\le n}M_1\circ f^j\to_p0$.
\end{proof}

\begin{rmk} \label{rmk:PM}
Convergence results in the $\cM_1$ topology for nonuniformly hyperbolic maps were considered previously by~\cite{MZ15} with applications to  Markov Pomeau-Manneville intermittent maps~\cite{PomeauManneville80}.
Such maps fall into a greatly simplified version of the Chernov-Markov-Zhang framework.
Fix $\alpha\in(1,2)$ and set $\Lambda=[0,1]$.
A prototypical example~\cite{LSV99} is the map $T:\Lambda\to\Lambda$ given by
$Tx=\begin{cases} x(1+2^{1/\alpha}x^{1/\alpha}) & x<\frac12 \\ 2x-1 & x>\frac12
\end{cases}$, but the method applies equally to the general class of intermittent Markov maps considered by~\cite{Thaler95}.
Taking $X=[\frac12,1]$, the first return map $f=T^\varphi:X\to X$ is already Gibbs-Markov, so there is no need to consider an induced return map $F=T^{\varphi^Y}$, nor to quotient along stable leaves.  In other words, $X=Y=\bar Y$.
For these examples, condition~\eqref{eq:stable} holds by~\cite{Gouezel04}.
Condition~\eqref{eq:Lp} and condition~(a) in Theorem~\ref{thm:levy} were verified in~\cite[Section~4]{MZ15}.

Theorem~\ref{thm:levy}(a) also applies to non-Markovian intermittent maps $T:\Lambda\to\Lambda$: the so-called AFN maps studied by~\cite{Zweimuller98}.  
A specific example is 
given by $Tx=x(1+bx^{1/\alpha}) \bmod1$ which is not Markov   
when the positive constant $b$ is not an integer.  
As far as we know, the WIP for stable laws has not been previously studied for such maps.  
Since this is a much simpler situation than for our main billiard example, we just sketch the details.
(In fact, the situation lies in between those for Markov intermittent maps and billiards: quotienting along stable leaves is not required, but we do need to consider an induced map $F=T^{\varphi^Y}$.)

Take $X$ to be the interval of domain of the rightmost branch of $T$.  Let $v:\Lambda\to\R$ be H\"older with $v(0)\neq0$ and define $V=\sum_{\ell=0}^{\varphi-1}v\circ T^\ell$ where $\varphi:X\to\Z^+$ is the first return time.  The same calculations as in the Markov case show that
$\mu_X(\varphi>n)\sim cn^{-\alpha}$ for some $c>0$ and that $V-v(0)\varphi\in L^p(X)$ for some $p>\alpha$.  Hence~\eqref{eq:Lp} is satisfied.
Also condition~(a) of Theorem~\ref{thm:levy} holds as in the Markov case.
By~\cite[Section~9]{BruinTerhesiu18}, $f=T^\varphi$ is modelled by a Young tower with exponential tails so these maps fall into the Chernov-Markarian-Zhang framework.

It remains to verify 
the stable law~\eqref{eq:stable}.  One method is to proceed as in~\cite[Section~3]{JungZhangsub}, but alternatively we can make use of the fact proved in~\cite{BruinTerhesiu18}  that $\varphi^Y$ inherits the tail asymptotic satisfied by $\varphi$.
Since $F:Y\to Y$ is Gibbs-Markov and $\varphi^Y$ is constant on partition elements, a stable law for $\varphi^Y$ is immediate by~\cite[Theorem~6.1]{AaronsonDenker01}.  This yields the desired stable law for $\varphi$ by Theorem~\ref{thm:induce}.  
\end{rmk}

\section{Inducing functional limit laws}
\label{sec:Minduce}

The proof of Theorem~\ref{thm:levy}(a) makes use of a purely probabilistic
result~\cite[Theorem~2.2]{MZ15} on inducing functional limit laws on $D[0,\infty)$ with the $\cM_1$ topology.  The result in~\cite{MZ15} is stated in a slightly generalised form in Theorem~\ref{thm:M1induce} below.
The proof of Theorem~\ref{thm:levy}(b) makes use of the corresponding result
in the $\cM_2$ topology.
In this section, it is not required that $W$ is a L\'evy process.

We assume the set up in Section~\ref{sec:tower} but with different notation
(this simplifies the application of Theorem~\ref{thm:M1induce} in Sections~\ref{sec:An}
and~\ref{sec:proof}).
Let $S:\Omega\to\Omega$ be an ergodic measure-preserving transformation on a probability space $(\Omega,\mu_\Omega)$ 
and fix a positive measure subset $\Omega_0\subset\Omega$.  
Let $\mu_{\Omega_0}$ be a probability measure on $\Omega_0$ and let $r:\Omega_0\to\Z^+$ be an integrable return time 
such that the return map (not necessarily the first return) $S_0=S^r:\Omega_0\to\Omega_0$ is measure-preserving and ergodic.
Define $\bar r=\int_{\Omega_0}r\,d\mu_{\Omega_0}$.  
Let $\hS:\Delta\to\Delta$ denote the tower with base map $S_0$ and return time $r$, and let
$\pi:\Delta\to\Omega$ be the semiconjugacy $\pi(y,\ell)=S^\ell y$.  
We assume that 
$\mu_\Delta=(\mu_{\Omega_0}\times{\rm counting})/\bar r$ is ergodic and that
$\pi_*\mu_\Delta =\mu_\Omega$.

Let $\phi:\Omega\to\R$ be measurable, with induced observable
$\Phi : \Omega_0 \to \R$ given by $\Phi=\sum_{\ell=0}^{r-1} \phi\circ S^\ell$.
Let
$b_n$ be a sequence of positive numbers.
Define c\`adl\`ag processes $\psi_n$ on $\Omega$ and $\Psi_n$ on $\Omega_0$:
\[
\psi_n(t)=\frac1{b_n} \sum_{j=0}^{[nt]-1} \phi\circ S^j , \qquad
	\Psi_n(t)=\frac1{b_n} \sum_{j=0}^{[nt]-1} \Phi\circ S_0^j.
\]
Let $W\in D[0,\infty)$ and define $\tW(t)=W(\bar r t)$.
(If $W$ is an $\alpha$-stable L\'evy process, $\alpha\in(0,2]$, then $\tW=\bar r^{1/\alpha} W$.)
Also, define
$\phi_\ell=\sum_{j=0}^{\ell-1}\phi\circ S^j$ and
\begin{align*}
M_1 & = \max_{1\le\ell'\le\ell\le r}  (\phi_{\ell'}-\phi_\ell)
	\wedge \max_{1\le\ell'\le\ell\le r}  (\phi_\ell-\phi_{\ell'}),
\\
M_2 & = \Big\{\max_{0 \le\ell\le r}   (-\phi_\ell)
	+ \max_{0 \le\ell\le r}   (\phi_\ell-\Phi) \Big\} 
	 \wedge 
	\Big\{\max_{0 \le\ell\le r}    \phi_\ell 
	+ \max_{0 \le\ell\le r}  (\Phi-\phi_\ell) \Big\}.
\end{align*}

\begin{thm} \label{thm:M1induce} Let $i\in\{1,2\}$.
Suppose that on $(\Omega_0,\mu_{\Omega_0})$
\begin{enumerate}
\item 
	$\Psi_n \to_w \tW$ in $(D[0,\infty),\cM_i)$ and
\item $\frac1{b_n} \max_{0\le j \le n} M_i\circ S_0^j \to_p 0$.
\end{enumerate}
Then $\psi_n \to_w W$ in $(D[0,\infty),\cM_i)$ on $(\Omega,\mu_\Omega)$.
\end{thm}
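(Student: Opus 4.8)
The plan is to establish the result first on the base $(\Omega_0,\mu_{\Omega_0})$, by comparing $\psi_n$ (the sum along $S$) with a deterministically time-changed copy of $\Psi_n$ (the sum along $S_0$), and then to lift the conclusion to $(\Omega,\mu_\Omega)$ through the tower $\Delta$. For $y\in\Omega_0$ set $r_k(y)=\sum_{i=0}^{k-1}r(S_0^iy)$ (the time of the $k$-th return) and $m_N(y)=\max\{k\ge0:r_k(y)\le N\}$ (the number of completed returns by time $N$). Splitting a Birkhoff sum along $S$ into completed excursions plus the excursion in progress gives
\[
\sum_{j=0}^{N-1}\phi\circ S^j(y)=\sum_{k=0}^{m_N(y)-1}\Phi\circ S_0^k(y)
+\phi_{\,N-r_{m_N(y)}(y)}\bigl(S_0^{m_N(y)}y\bigr),
\]
and hence $\psi_n=\chi_n+E_n$, where $\chi_n(t)=b_n^{-1}\sum_{k<m_{[nt]}(y)}\Phi\circ S_0^k(y)=\Psi_n\bigl(\lambda_n(t)\bigr)$ with time change $\lambda_n(t)=m_{[nt]}(y)/n$, and $E_n(t)=b_n^{-1}\phi_{\,[nt]-r_{m_{[nt]}(y)}(y)}\bigl(S_0^{m_{[nt]}(y)}y\bigr)$ is the in-progress excursion, of length $<r(S_0^{m_{[nt]}(y)}y)$. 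It then suffices to show (i) $\chi_n\to_w W$ in $\cM_i$, and (ii) $\psi_n$ and $\chi_n$ are $\cM_i$-close in probability.

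For (i): the ergodic theorem for $S_0$ applied to the integrable $r$ gives $r_k(y)/k\to\bar r$, hence $m_N(y)/N\to\bar r^{-1}$ a.e., and since $\lambda_n$ and $\lambda(t):=t/\bar r$ are nondecreasing with $\lambda$ continuous, pointwise convergence upgrades to $\lambda_n\to\lambda$ uniformly on compacts, a.e.\ $y$. As $\lambda$ is deterministic, continuous and strictly increasing, the first hypothesis together with $\lambda_n\to\lambda$ in probability yields the joint convergence $(\Psi_n,\lambda_n)\to_w(\tW,\lambda)$ in $(D[0,\infty),\cM_i)\times C[0,\infty)$; since the composition map $(x,g)\mapsto x\circ g$ is continuous at every pair whose second coordinate is continuous and strictly increasing (\cite[Ch.~13]{Whitt}), the continuous mapping theorem gives $\chi_n=\Psi_n\circ\lambda_n\to_w\tW\circ\lambda=W$, using $\tW(\lambda(t))=\tW(t/\bar r)=W(t)$.

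For (ii): fix $T>0$ and compare the thickened graphs of $\psi_n$ and $\chi_n$ on $[0,T]$. On the $k$-th excursion $\chi_n$ is constant and then jumps by $b_n^{-1}\Phi(S_0^ky)$, while $\psi_n=\chi_n+E_n$ runs, over a time interval of length $<r(S_0^ky)/n$, through the intermediate values $\chi_n^-+b_n^{-1}\phi_\ell(S_0^ky)$, $0\le\ell\le r(S_0^ky)$, ending exactly at the post-jump value of $\chi_n$ since $\phi_{r(S_0^ky)}(S_0^ky)=\Phi(S_0^ky)$; thus $\psi_n$ merely ``fills in'' the jumps of $\chi_n$. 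The horizontal discrepancy between suitable parametric representations is at most $\max_{k\le m_{[nT]}(y)}r(S_0^ky)/n\to0$ a.e., again by the ergodic theorem. The vertical discrepancy is the amount by which the excursion path leaves the interval that $\chi_n$ spans at the jump: for $i=2$ this interval is $\bigl[\,0\wedge\Phi(S_0^ky),\,0\vee\Phi(S_0^ky)\,\bigr]$ and the excess is bounded by $b_n^{-1}M_2(S_0^ky)$ in view of the identity $M_2=\max_\ell\phi_\ell-\min_\ell\phi_\ell-|\Phi|$; for $i=1$ the relevant quantity is the failure of monotonicity of the excursion (one cannot follow a non-monotone path with a monotone parametric representation), bounded by a constant times $b_n^{-1}M_1(S_0^ky)$. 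Hence the $\cM_i$ distance between $\psi_n|_{[0,T]}$ and $\chi_n|_{[0,T]}$ is $O\bigl(b_n^{-1}\max_{k\le[nT]}M_i(S_0^ky)\bigr)+o(1)$ (using $m_{[nT]}(y)\le[nT]$), which tends to $0$ in probability by the second hypothesis together with a routine splitting of $\{0,\dots,[nT]\}$ into $O(T)$ windows of length $n$ and the $S_0$-invariance of $\mu_{\Omega_0}$. Since $\cM_i$-convergence is stable under perturbations that are $\cM_i$-small in probability, combining (i) and (ii) gives $\psi_n\to_w W$ in $(D[0,\infty),\cM_i)$ on $(\Omega_0,\mu_{\Omega_0})$.

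It remains to pass from the base to $(\Omega,\mu_\Omega)$. Lifting $\phi$ to $\hat\phi=\phi\circ\pi$ on $\Delta$, the associated process $\hat\psi_n$ along $\hat S$ restricts to $\psi_n$ on the base $\Omega_0\times\{0\}$, and because $\hat S^j(y,\ell)=\hat S^{j+\ell}(y,0)$ for $\ell<r(y)$, the process started at height $\ell$ differs from the one started at the base only by a time shift of $\ell$ and a vertical shift by a length-$\ell$ Birkhoff sum, both $\cM_i$-negligible as $n\to\infty$ on the full-measure set $\{r<\infty\}$; since $\mu_\Delta=(\mu_{\Omega_0}\times\mathrm{counting})/\bar r$ and $\pi_*\mu_\Delta=\mu_\Omega$, the conclusion passes from $(\Omega_0,\mu_{\Omega_0})$ to $(\Delta,\mu_\Delta)$ and, via $\pi$, to $(\Omega,\mu_\Omega)$ (alternatively one may invoke the Eagleson--Zweim\"uller transfer recalled in Remark~\ref{rmk:SDC}). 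I expect step (ii) in the case $i=2$ to be the main obstacle: one must verify, in the parametric representation defining $\cM_2$ convergence, that the filled-in excursion path of $\psi_n$ stays within the thickened graph of $\chi_n$ up to a vertical error governed precisely by $M_2$ and a horizontal error governed by the excursion durations, and that this bookkeeping is uniform over the $O(n)$ excursions inside a window. The case $i=1$ is essentially \cite[Theorem~2.2]{MZ15}; the new point is that $M_2$ plays for $\cM_2$ exactly the role that $M_1$ plays for $\cM_1$.
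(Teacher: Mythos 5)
Your proposal is correct and follows essentially the same route as the paper: reduce to the first-return case via the tower, split $\psi_n$ into the time-changed completed-excursion process plus the in-progress excursion, handle the random time change by the ergodic theorem and composition continuity (the content of \cite[Lemma~3.4]{MZ15}), and bound the per-excursion $\cM_i$ discrepancy by the excursion duration over $n$ plus $M_i/b_n$. The one piece you assert rather than prove --- that the $\cM_2$ distance between an excursion path on $[t_j,t_{j+1}]$ and the corresponding completed graph of the jump is at most $t_{j+1}-t_j$ plus $A_j\wedge B_j\le b_n^{-1}M_2(S_0^jx)$ --- is exactly the paper's Lemma~\ref{lem:approx}, whose short case analysis on completed graphs you would still need to supply, but you have correctly identified both the statement and the quantities involved.
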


\begin{pfof}{Theorem~\ref{thm:M1induce} for $i=1$.}
Under the additional assumptions that $b_n$ is regularly varying and $ r$ is the first return time, this is is precisely~\cite[Theorem~2.2]{MZ15}.
(The conclusion in~\cite[Theorem~2.2]{MZ15} is stated slightly differently using that $\bar r^{-1}=\mu_\Omega(\Omega_0)$ for first return times.)
It is easily checked that the proof in~\cite{MZ15} does not use any properties of the sequence $b_n$.   

It remains to drop the assumption that $r$ is the first return time to $\Omega_0$.
Note that $\Omega_0\subset \Omega$ is naturally identified with $\Delta_0=\{(y,0): y\in \Omega_0\}\subset\Delta$ and $ r:\Delta_0\to\R$ is now the first return to $\Delta_0$ for the dynamics on $\Delta$.  Define $\hS_0:\Delta_0\to\Delta_0$, $\hS_0(y,0)=(S_0y,0)$.

The observable $\phi:\Lambda\to\R$ lifts to an observable $\hat \phi=\phi\circ\pi:\Delta\to\R$.
Define the corresponding c\`adl\`ag process 
$\hpsi_n(t)=b_n^{-1}\sum_{j=0}^{[nt]-1}\hat \phi\circ \hS^j$ on $\Delta$.
Also, we define $\hPhi$, $\hPsi_n$, $\hM_1$ on $\Delta_0$ (corresponding to $\Phi$, $\Psi_n$, $M_1$ on $\Omega_0$) using $\hphi$, $\hS$, $\hS_0$ instead of $\phi$, $S$, $S_0$, so
\[
\hPhi=\sum_{\ell=0}^{\hat r-1}\hphi\circ \hS^\ell,
\quad
\hPsi_n(t)=\frac{1}{b_n}\sum_{j=0}^{[nt]-1}\hPhi\circ \hS_0^j, \quad
\hM_1=
\max_{1\le\ell'\le\ell\le \hat r}  (\hat \phi_{\ell'}-\hat \phi_\ell)
	\wedge \max_{1\le\ell'\le\ell\le \hat r}  (\hat \phi_\ell-\hat \phi_{\ell'}),
\]
where $\hat \phi_\ell=\sum_{j=0}^{\ell-1}\hat \phi\circ\hS^j$
and $\hat r(y,0)=r(y)$.

Note that
\[
\hPhi(y,0)=\Phi(y), \quad \hPsi_n(t)(y,0)=\Psi_n(t)(y), \quad \hM_1(y,0)=M_1(y).
\]
In particular, the assumptions 1 and 2 for 
$\Psi_n$ and $M_1$ on $\Omega_0$  imply the corresponding assumptions for
$\hPsi_n$ and $\hM_1$ on $\Delta_0$.
Since $\hat r:\Delta_0\to\Z^+$ is the first return time, 
$\hpsi_n\to_w W$ on $(\Delta,\mu_\Delta)$ in $(D[0,\infty),\cM_1)$
by~\cite[Theorem~2.2]{MZ15}.
The result follows 
since $\pi$ is a measure-preserving semiconjugacy.
\end{pfof}

\begin{pfof}{Theorem~\ref{thm:M1induce} for $i=2$.}
The strategy here is similar to the one of \cite[Theorem~2.2]{MZ15}.
As in the proof for $i=1$, by considering the associated tower we may suppose without loss that
$ r:\Omega_0\to\Z^+$ is the first return time.

Write $\psi_n = U_n + R_n$, where
\[
U_n(t)  = \frac1{b_n}\sum_{\ell=0}^{N_{[nt]}-1} \Phi \circ S_0^\ell
	\quad\text{and}\quad 
	R_n(t) = \frac1{b_n} \Bigg( \sum_{\ell=0}^{[nt]-  r_{N_{[nt]}}-1} \phi \circ S^\ell \Bigg) \circ S_0^{N_{[nt]}}.
\]
Here, $r_k=\sum_{j=0}^{k-1} r\circ S^j$ and $N_k(x)=\max \{ \ell \ge 1 : r_{\ell}(x) \le k \}$ is the number of returns of $x$ 
to the set $\Omega_0$, under iteration by $S$,
up to time $k$. 

By~\cite[Lemma~3.4]{MZ15}, $U_n\to_w (\tW(\bar r^{-1}t))_{t\ge0}=W$
in $(D[0,\infty),\cM_2)$.  (The hypotheses of~\cite[Lemma~3.4]{MZ15}
are with respect to the $\cM_1$ topology.  
However, most of the proof holds in any separable metric space and the only 
ingredient that relies on the specific topology is~\cite[Theorem~13.2.3]{Whitt} which is formulated for both $\cM_1$ and $\cM_2$.)

We claim that
$d_{\cM_2,[0,K]} (\psi_n, U_n)\to 0$ as $n\to\infty$
for each $K\in\N$.
Then by~\cite[Theorem~3.1]{Billingsley99}, 
$\psi_n\to_w W$ in $(D[0,K],\cM_2)$ 
for each $K\in\N$, and the result follows.

It remains to verify the claim.
This means taking into account the contribution of the final incomplete excursion from $\Omega_0$ (if any) encoded by $R_n$.
Following \cite[Lemmas~3.5 and~3.6]{MZ15},
given $x\in \Omega_0$, $n\ge1$, write $g_j(t)= \psi_n (t) (x) |_{[t_j,t_{j+1}]}$ for every $0\le j \le Kn+1$, where $t_j= \frac1n  r_j \wedge K$. Then 
\[
d_{\cM_2, [0,K]}(\psi_n(\cdot)(x), U_n(\cdot)(x)) 
	 \le \max_{0\le j \le Kn+1} \; d_{\cM_2, [t_j,t_{j+1}]}(g_j , \bar g_j),
\]
where $\bar g_j=U_n|_{[t_j,t_{j+1}]}=g_j|_{[t_j,t_{j+1})}+1_{\{t_{j+1}\}}g_j(t_{j+1})$.

By Lemma~\ref{lem:approx},
\begin{align*}
	d_{\cM_2, [t_j,t_{j+1}]}(g_j , \bar g_j) 
	& \le  t_{j+1}-t_j +  A_j\wedge B_j
	\le   \frac1n  r(S_0^jx) +  A_j\wedge B_j,
\end{align*}
where
\begin{align*}
A_j & = 
\sup_{t\in [t_j,t_{j+1}]} ( g_j(t_j) -g_j(t))+\sup_{t\in [t_j,t_{j+1}]}(g_j(t)-g_j(t_{j+1}))
 \\& = \sup_{t\in [t_j,t_{j+1}]}
 (\psi_n(t_j)(x)-\psi_n(t)(x))
+\sup_{t\in [t_j,t_{j+1}]}(\psi_n(t)(x)-\psi_n(t_{j+1})(x)) \\
 & = \frac{1}{b_n}\max_{0\le\ell\le r(S_0^jx)}(-\phi_{\ell}(S_0^jx))
 +\frac{1}{b_n}\max_{0\le\ell\le r(S_0^jx)}(\phi_\ell(S_0^jx)-\Phi(S_0^jx)),
\end{align*}
and similarly
\begin{align*}
B_j & = 
\sup_{t\in [t_j,t_{j+1}]} ( g_j(t) -g_j(t_j))+\sup_{t\in [t_j,t_{j+1}]}(g_j(t_{j+1})-g_j(t))
\\
& = \frac{1}{b_n}\max_{0\le\ell\le r(S_0^jx)}\phi_{\ell}(S_0^jx)
 +\frac{1}{b_n}\max_{0\le\ell\le r(S_0^jx)}(\Phi(S_0^jx)-\phi_{\ell}(S_0^jx)).
\end{align*}
In particular, $A_j\wedge B_j\le \frac{1}{b_n}M_2(S_0^jx)$.  Hence
we have shown that
\[
d_{\cM_2, [0,K]}(\psi_n, U_n)
	 \le \frac1n\max_{0\le j \le Kn+1} 
 r\circ S_0^j+\frac{1}{b_n}\max_{0\le j \le Kn+1}M_2\circ S_0^j.
\]
The first term converges to zero a.e.\ by ergodicity, and the second term
converges to zero in probability by the assumption on $M_2$.
\end{pfof}

\section{Limit laws for $\varphi$ and $\varphi^Y$}
\label{sec:An}

Recall that $T:\Lambda\to \Lambda$ is assumed to possess a Chernov-Markarian-Zhang structure,  with first return map $f=T^\varphi:X\to X$ modelled by a Young tower $\Delta=Y^\tau$ with exponential tails and
induced return time
$\varphi^Y=\sum_{\ell=0}^{\tau-1}\varphi\circ f^\ell:Y\to\Z^+$.

In this section, we show how to pass from the stable law~\eqref{eq:stable}
for $\varphi$ to a stable law for $\varphi^Y$ and WIPs for $\varphi$ and $\varphi^Y$.  We also prove Proposition~\ref{prop:dom}.

Note that $\int_Y \varphi^Y\,d\mu_Y=\bar\varphi\bar\tau$.
Define the centered return times
\[
\tvarphi=\varphi-\bar \varphi, \qquad
\tvarphi^Y=\varphi^Y-\tau\bar \varphi, \qquad
\widetilde{\varphi^Y}=\varphi^Y-\bar\tau\bar\varphi.
\]
Define c\`adl\`ag processes $A_n$ and $A^Y_n$ on $X$ and $Y$,
\begin{align} \label{eq:An}
\SMALL A_n(t)=n^{-1/\alpha}\sum_{j=0}^{[nt]-1}\tvarphi\circ f^j, \qquad
A_n^Y(t)=n^{-1/\alpha}\sum_{j=0}^{[nt]-1}\tvarphi^Y\circ F^j.
\end{align}

\begin{lemma} \label{lem:An}
Assume that~\eqref{eq:stable} holds and
let $W$ be the $\alpha$-stable L\'evy process corresponding to the
totally skewed $\alpha$-stable law~$G$ in~\eqref{eq:stable}.
Then
\\[.75ex]
(a) $n^{-1/\alpha}\sum_{j=0}^{n-1}\tvarphi^Y\circ F^j\to (\bar\tau)^{1/\alpha}G$
on $(Y,\mu_Y)$.
\\[.75ex]
(b) $n^{-1/\alpha}\sum_{j=0}^{n-1}\widetilde{\varphi^Y}\circ F^j\to (\bar\tau)^{1/\alpha}G$
on $(Y,\mu_Y)$.
\\[.75ex]
(c) $A_n^Y\to_w (\bar\tau)^{1/\alpha}W$ on $(Y,\mu_Y)$ in $(D[0,\infty),\cJ_1)$.
\\[.75ex]
(d) $A_n\to_w W$ on $(X,\mu_X)$ in $(D[0,\infty),\cM_1)$.
\end{lemma}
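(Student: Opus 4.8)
The plan is to deduce parts (a)--(d) essentially by combining the inducing/uninducing machinery of Section~\ref{sec:Minduce} with standard transfer results between return maps and base maps. For part (a), the natural route is to apply an \emph{inducing} result for stable laws: the stable law \eqref{eq:stable} holds for $\varphi$ on $(X,\mu_X)$, and $F=f^\tau$ is a return map over $f$ with integrable return time $\tau$ (indeed with exponential tails by \eqref{eq:exptails}), so that $\varphi^Y=\sum_{\ell=0}^{\tau-1}\varphi\circ f^\ell$ is exactly the induced observable of $\varphi$. One expects a result of the form: if $b_n^{-1}\sum_{j<n}(\varphi-\bar\varphi)\circ f^j\to_d G$ on the return map, then $b_n^{-1}\sum_{j<n}(\varphi^Y-\bar\tau\bar\varphi)\circ F^j\to_d \bar\tau^{1/\alpha}G$ on the base, with $b_n=n^{1/\alpha}$; the scaling factor $\bar\tau^{1/\alpha}$ arises because over time $n$ for $F$ one sees roughly $\bar\tau n$ iterates of $f$, and $(\bar\tau n)^{1/\alpha}=\bar\tau^{1/\alpha}n^{1/\alpha}$. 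I would either cite the relevant statement (e.g.\ the Gou\"ezel / Tyran-Kami\'nska-type transfer referenced elsewhere in the paper, or \cite[Theorem~6.1]{AaronsonDenker01} combined with an Aaronson--Denker-type inducing argument) or, if no off-the-shelf statement is available, prove it via the reverse of the $\cM_1$-uninducing argument: note that $A_n^Y$ on $Y$ uninduces to $A_n$ on $X$, and conversely the stable law for $\varphi$ lifts. Part (b) follows from (a) once one checks that $\tvarphi^Y$ and $\widetilde{\varphi^Y}$ differ by a coboundary-plus-negligible term: $\tvarphi^Y-\widetilde{\varphi^Y}=\bar\varphi(\bar\tau-\tau)$, and $\tau-\bar\tau$ has a Birkhoff sum of size $o(n^{1/\alpha})$ — in fact $O(\sqrt{n\log\log n})$ a.e.\ by the CLT/LIL for $\tau$, which has exponential tails — so the difference of the two normalized sums is $n^{-1/\alpha}\cdot o(n^{1/\alpha})\to_p 0$, and Slutsky finishes it.

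For part (c), the strategy is to upgrade the one-dimensional convergence in (a)/(b) to a functional limit law in the Skorohod $\cJ_1$ topology. Here the key point is that $\varphi^Y$ is constant on partition elements of the Gibbs--Markov map $\bar F$ (this was imposed in Section~\ref{sec:CMZ}), so on the quotient $\bar Y$ one has a Gibbs--Markov observable in the domain of an $\alpha$-stable law. For such observables the functional stable limit theorem in the $\cJ_1$ topology is known — I would invoke Tyran-Kami\'nska~\cite{TyranKaminska10} (a functional limit theorem giving $\cJ_1$-convergence to the corresponding L\'evy process for Birkhoff sums of observables in a stable domain of attraction under a mixing/Gibbs--Markov hypothesis) together with Gou\"ezel~\cite{Gouezel10b}, exactly as flagged in the ``Strategy of proof'' paragraph. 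One must check the hypotheses of that theorem: regular variation of the tail of $\varphi^Y$ with index $\alpha$ (inherited from $\varphi$ via the inducing in (a)), and the spectral-gap / Gibbs--Markov structure of $\bar F$. Then $A_n^Y\to_w \bar\tau^{1/\alpha}W$ on $\bar Y$, hence on $Y$ since $\bar\pi$ is a measure-preserving semiconjugacy and $\varphi^Y$ factors through it.

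Finally, part (d) is the $\cM_1$-uninducing step: apply Theorem~\ref{thm:M1induce} with $i=1$ in the notation of Section~\ref{sec:Minduce}, taking $\Omega=X$, $S=f$, $\Omega_0=Y$, $r=\tau$, $S_0=F$, $\phi=\tvarphi$, $\Phi=\widetilde{\varphi^Y}$ (or $\tvarphi^Y$, adjusting by the coboundary as in (b)), and $b_n=n^{1/\alpha}$, $W$ the L\'evy process, so that $\tW=\bar\tau^{1/\alpha}W$ matches the conclusion of (c) — this is hypothesis~1. Hypothesis~2 asks that $n^{-1/\alpha}\max_{j\le n}M_1^\tau\circ F^j\to_p 0$, where $M_1^\tau$ measures the nonmonotonicity of the excursion of $\varphi$ over one $\tau$-block. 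The clean way is Proposition~\ref{prop:dom}: since $\varphi\ge 1$, the relevant $M_1$ for $\phi=\tvarphi$ over a $\tau$-block is bounded by the total variation of the Birkhoff sum of $\varphi$ over that block, which is $\le \varphi^Y$, but one needs a bound $\le C(\varphi^Y)^\delta$ with $\delta<1$ — this does \emph{not} hold in general, so instead I would argue directly: $M_1$ here is bounded by $\max_\ell \varphi\circ f^\ell$ over the block (the jump between consecutive return-time values), and since $\tau$ has exponential tails while $\varphi$ has polynomial tails of index $\alpha$, the $\mu_Y$-maximum of this quantity over a $\tau$-block of length $n$ is $o(n^{1/\alpha})$ in probability by a Borel--Cantelli / maximal-inequality estimate. \textbf{This last verification of hypothesis~2 for part (d) is the main obstacle}: one must carefully control the interplay between the exponential tails of $\tau$ and the $\alpha$-stable tails of $\varphi$ to show the within-block oscillation of partial sums of $\tvarphi$ is negligible at scale $n^{1/\alpha}$, which is where the bulk of the honest work lies; the rest is bookkeeping with semiconjugacies and Slutsky-type reductions.
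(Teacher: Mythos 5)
Your treatment of parts (a)--(c) is essentially the paper's: (a) is the two-way inducing theorem for stable laws (Theorem~\ref{thm:induce} and Remark~\ref{rmk:A2}, whose hypothesis $n^{-1/\alpha}(\tau_n-n\bar\tau)\to_p0$ you must and do essentially supply via the Gibbs--Markov CLT for $\tau\in L^2$ with $\alpha<2$); (b) is the Slutsky reduction using $\widetilde{\varphi^Y}-\tvarphi^Y=\bar\varphi(\tau-\bar\tau)$; and (c) is Gou\"ezel~\cite{Gouezel10b} plus Tyran-Kami\'nska~\cite{TyranKaminska10} on the quotient Gibbs--Markov map.

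Part (d), however, contains a genuine gap at exactly the step you flag as the main obstacle, and your proposed fix would fail. First, the bound $M_1\le\max_{0\le \ell<\tau}\varphi\circ f^\ell$ is false: take a block on which $\varphi\equiv 1$ for the first half and $\varphi\equiv K$ (with $K$ slightly larger than $\bar\varphi$) for the second half; then both the downward and the upward oscillation of $\tvarphi_\ell=\varphi_\ell-\ell\bar\varphi$ are of order $\tau$, so $M_1\approx c\tau$, which exceeds $K$ for $\tau$ large. Second, and more seriously, even the quantity you propose to control is \emph{not} negligible: $\max_{j\le n}\max_{0\le\ell<\tau}\varphi\circ f^\ell\circ F^j$ is the maximum of $\varphi$ over roughly $\bar\tau n$ iterates of $f$, and since $\varphi$ lies in the domain of attraction of an $\alpha$-stable law with $\alpha<2$, this maximum is of exact order $n^{1/\alpha}$ (cf.\ Proposition~\ref{prop:notprob}); no Borel--Cantelli argument can make it $o_p(n^{1/\alpha})$ --- the large single excursions are precisely what produce the jumps of $W$. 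The correct observation is elementary and exploits that $M_1$ is a \emph{minimum} of the upward and the downward oscillation: since $\varphi>0$, the partial sums $\varphi_\ell$ are nondecreasing, so
\[
M_1\le \max_{1\le\ell'\le\ell\le\tau}(\tvarphi_{\ell'}-\tvarphi_\ell)
=\max_{1\le\ell'\le\ell\le\tau}\big\{(\varphi_{\ell'}-\varphi_\ell)+(\ell-\ell')\bar\varphi\big\}\le\tau\bar\varphi ,
\]
i.e.\ only the linear centering can drive the excursion downwards. Since $\tau$ has exponential tails (in particular $\tau\in L^\alpha(Y)$), the ergodic theorem gives $\max_{j\le n}\tau\circ F^j=o(n^{1/\alpha})$ a.e., which is condition~2 of Theorem~\ref{thm:M1induce}. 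There is no interplay between the tails of $\tau$ and those of $\varphi$ to control; the heavy tails of $\varphi$ never enter this estimate.
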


\begin{proof}
(a) Since $\tau:\bar Y\to\Z^+$ has exponential tails, we certainly have that
$\tau\in L^2$.  Also $\tau$ is constant on partition elements  and $\bar F:\bar Y\to\bar Y$ is Gibbs-Markov, so
it is standard (see for example~\cite[Theorem~1.5]{Gouezel10b}) that $n^{-1/2}(\sum_{j=0}^{n-1}\tau\circ \bar F^j-n\bar\tau)$ converges in distribution (to a possibly degenerate normal distribution).
Since $\alpha\in(1,2)$,
\begin{align} \label{eq:tau}
n^{-{1/\alpha}}\Big(\sum_{j=0}^{n-1}\tau\circ F^j-n\bar\tau\Big)=_d
n^{-{1/\alpha}}\Big(\sum_{j=0}^{n-1}\tau\circ \bar F^j-n\bar\tau\Big)\to_p0.
\end{align}
By assumption~\eqref{eq:stable}, the centered return time function $\tvarphi$ satisfies a stable law on $X$.
Hence condition~(a) in Theorem~\ref{thm:induce}
is satisfied with $b_n=n^{1/\alpha}$ and it follows from Theorem~\ref{thm:induce} and Remark~\ref{rmk:A2} that $\tvarphi^Y$ satisfies the required stable law on $Y$.
\\[.75ex]
(b)
By~\eqref{eq:tau} and part~(a), 
\begin{align*}
 n^{-1/\alpha}\sum_{j=0}^{n-1}\widetilde{\varphi^Y}\circ F^j  & =
n^{-1/\alpha}\Big(\sum_{j=0}^{n-1}\varphi^Y\circ F^j-n\bar\tau\bar\varphi\Big)
\\ & =
n^{-1/\alpha}\sum_{j=0}^{n-1}\tvarphi^Y\circ F^j
+\bar\varphi n^{-1/\alpha}\Big( \sum_{j=0}^{n-1}\tau\circ F^j -n\bar\tau\Big)
\to_d
(\bar\tau)^{1/\alpha}G.
\end{align*}
\\[.75ex]
(c)
Recall that $\tvarphi^Y$ is constant on partition elements of the 
Gibbs-Markov map $\bar F:\bar Y\to\bar Y$.
By part~(a),
$n^{-1/\alpha}\sum_{j=0}^{n-1}\tvarphi^Y\circ \bar F^j\to_d 
(\bar\tau)^{1/\alpha}G$.
By Gou\"ezel~\cite[Theorem~1.5]{Gouezel10b}, $\tvarphi^Y$ lies in the domain
of attraction of the stable law
$(\bar\tau)^{1/\alpha}G$  and hence has tails that are
regularly varying with index $\alpha$.  We have verified the hypotheses 
of Tyran-Kami{\'n}ska~\cite[Corollary~4.1]{TyranKaminska10}\footnote{The hypothesis ``exponentially continued fraction mixing'' in~\cite[Corollary~4.1]{TyranKaminska10} is automatic for full-branch Gibbs-Markov maps (see the discussion immediately after~\cite[Example~4.1]{TyranKaminska10}).}, and so deduce that
$A_n^Y\to_w (\bar\tau)^{1/\alpha}W$ in
the $\cJ_1$ topology.
\\[.75ex]
(d) We apply Theorem~\ref{thm:M1induce} with $i=1$ to pass from $A_n^Y:Y\to\R$ to
$A_n:X\to\R$ via the inducing time $\tau:Y\to\Z^+$.
(The spaces $\Omega_0\subset \Omega$ in Theorem~\ref{thm:M1induce} correspond to the
spaces $Y\subset X$ here.  Similarly $\phi$, $\Phi$, $\psi_n$, $\Psi_n$, $r$ are called $\tvarphi$, $\tvarphi^Y$, $A_n$, $A_n^Y$, $\tau$,
and the maps $S:\Omega\to \Omega$, $S_0:\Omega_0\to \Omega_0$ are called $f:X\to X$, $F:Y\to Y$.)

Condition 1 of Theorem~\ref{thm:M1induce} is immediate from part~(c).
Define $M_1:Y\to\R$,
\[
M_1  = \max_{1\le\ell'\le\ell\le \tau}  (\tvarphi_{\ell'}-\tvarphi_\ell)
        \wedge \max_{1\le\ell'\le\ell\le \tau}  (\tvarphi_\ell-\tvarphi_{\ell'}),
\]
where $\tvarphi_\ell=\sum_{j=0}^{\ell-1}\tvarphi\circ f^j$.
We claim that $n^{-1/\alpha}\max_{0\le j\le n}M_1\circ F^j\to0$ a.e.
This implies condition~2 of Theorem~\ref{thm:M1induce} and the result follows.

By positivity of $\varphi$,
\[
M_1\le \max_{1\le\ell'\le\ell\le \tau}  (\tvarphi_{\ell'}-\tvarphi_\ell)
= \max_{1\le\ell'\le\ell\le \tau}  \big\{(\varphi_{\ell'}-\varphi_\ell)-(\ell'-\ell)\bar\varphi\big\}\le\tau\bar\varphi.
\]
Since $\tau$ has exponential tails, it is certainly the case that
$\tau\in L^\alpha(Y)$.  By the ergodic theorem,
$n^{-1}\sum_{j=0}^{n-1}\tau^\alpha\circ F^j\to \int_Y\tau^\alpha\,d\mu_Y$ a.e.\ and so
$\tau\circ F^n=o(n^{1/\alpha})$ a.e.   It follows easily that
$\max_{j\le n}\tau\circ F^j=o(n^{1/\alpha})$ a.e.   
Hence $n^{-1/\alpha}\max_{j\le n}M_1\circ F^j\to0$ a.e.\ as required.
\end{proof}

\begin{cor} \label{cor:An}
Under the assumptions of Lemma~\ref{lem:An}, if $\delta\in(0,1)$  then
$n^{-1/\alpha}\max_{0\le j\le n}|\tvarphi|^\delta\circ f^j\to_p0$ on $(X,\mu_X)$.
\end{cor}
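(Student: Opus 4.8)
The plan is to read the corollary off the functional convergence $A_n\to_w W$ in $(D[0,\infty),\cM_1)$ proved in Lemma~\ref{lem:An}(d). The key observation is that each centered return time $\tvarphi\circ f^j$ with $0\le j\le n$ is precisely $n^{1/\alpha}$ times an increment of $A_n$ over a time window contained in $[0,2]$, so $\max_{0\le j\le n}|\tvarphi\circ f^j|$ is controlled by $n^{1/\alpha}$ times the oscillation of $A_n$ on $[0,2]$, which is bounded in probability. Raising to the power $\delta<1$ then produces a decaying prefactor $n^{-(1-\delta)/\alpha}$, and the result follows.

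Concretely, I would first note the identity $\tvarphi\circ f^j=n^{1/\alpha}\bigl(A_n(\tfrac{j+1}{n})-A_n(\tfrac jn)\bigr)$, valid for all $j\ge0$ directly from the definition of $A_n$. For $0\le j\le n$ both points $\tfrac jn$ and $\tfrac{j+1}{n}$ lie in $[0,2]$, so
\[
\max_{0\le j\le n}|\tvarphi\circ f^j|\le n^{1/\alpha}R_n,\qquad R_n:=\sup_{[0,2]}A_n-\inf_{[0,2]}A_n .
\]
Since $t=2$ is almost surely a continuity point of the L\'evy process $W$, Lemma~\ref{lem:An}(d) restricts to $A_n|_{[0,2]}\to_w W|_{[0,2]}$ in $(D[0,2],\cM_1)$; as $u\mapsto\sup_{[0,2]}u$ and $u\mapsto\inf_{[0,2]}u$ are continuous on $(D[0,2],\cM_1)$ (see~\cite[Section~13.4]{Whitt}), the continuous mapping theorem gives $R_n\to_w\sup_{[0,2]}W-\inf_{[0,2]}W$, which is finite almost surely because $W$ is c\`adl\`ag. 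In particular $(R_n)$ is bounded in probability. (One may equally argue that weakly convergent sequences in $(D[0,2],\cM_1)$ are tight and that $\cM_1$-compact sets are bounded in supremum norm, so $\sup_{[0,2]}|A_n|$ is bounded in probability, which is all that is needed.)

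It then remains to combine the estimates: for every $n\ge1$,
\[
n^{-1/\alpha}\max_{0\le j\le n}|\tvarphi|^\delta\circ f^j
=n^{-1/\alpha}\Bigl(\max_{0\le j\le n}|\tvarphi\circ f^j|\Bigr)^{\delta}
\le n^{-1/\alpha}\bigl(n^{1/\alpha}R_n\bigr)^{\delta}=n^{-(1-\delta)/\alpha}R_n^{\delta}.
\]
Because $\delta\in(0,1)$ the deterministic factor $n^{-(1-\delta)/\alpha}$ tends to $0$, while $R_n^\delta$ is bounded in probability, so the right-hand side tends to $0$ in probability, as claimed.

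The argument is short; the only delicate points are the two standard facts about the $\cM_1$ topology used above (stability of weak convergence under restriction to $[0,2]$, valid since $2$ is not a fixed discontinuity of $W$, and continuity of the supremum and infimum functionals), both available in~\cite{Whitt}, so I anticipate no real obstacle. I would also remark that a completely elementary alternative is available whenever one knows in addition that $\mu_X(\varphi>t)$ is regularly varying of index $-\alpha$ (as holds in all the applications in this paper): then $\E|\tvarphi|^q<\infty$ for every $q<\alpha$, and choosing $q\in(\alpha\delta,\alpha)$ a union bound gives $\mu_X\bigl(\max_{0\le j\le n}|\tvarphi|^\delta\circ f^j>\eps n^{1/\alpha}\bigr)\le(n+1)\,\mu_X\bigl(|\tvarphi|>\eps^{1/\delta}n^{1/(\alpha\delta)}\bigr)\ll n^{\,1-q/(\alpha\delta)}\to0$.
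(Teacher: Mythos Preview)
Your proof is correct and follows essentially the same approach as the paper: apply the continuous mapping theorem to the functional convergence in Lemma~\ref{lem:An}(d) to deduce that $n^{-1/\alpha}\max_{0\le j\le n}|\tvarphi\circ f^j|$ is bounded in probability, and then exploit $\delta<1$ to get decay. Your formulation using the oscillation $R_n$ on $[0,2]$ and the inequality $\max_{0\le j\le n}|\tvarphi\circ f^j|\le n^{1/\alpha}R_n$ is in fact slightly more careful than the paper's version, which uses $\sup_{[0,1]}|A_n|$ and writes an equality where an inequality (up to a factor~$2$, and on a marginally larger interval to accommodate $j=n$) is what is actually true.
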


\begin{proof}
The functional $\chi:(D[0,\infty),\cM_1)\to\R$, $\chi(g)=\sup_{[0,1]}|g|$
is continuous so, by the continuous mapping theorem applied to Lemma~\ref{lem:An}(d), we have
$\chi(A_n)\to_w\chi(W)$ on $(X,\mu_X)$.
Hence
$ n^{-1/\alpha}\max_{0\le j\le n}|\tvarphi|\circ f^j=\chi(A_n)$ converges in distribution and so 
$ n^{-1/(\delta\alpha)}\max_{0\le j\le n}|\tvarphi|\circ f^j\to_p0$.
The result follows.
\end{proof}

\begin{pfof}{Proposition~\ref{prop:dom}}
We have $\varphi=\bar\varphi+\tvarphi\ll 1+|\tvarphi|$,
so $M_i\ll\varphi^\delta\ll 1+|\tvarphi|^\delta$.  Hence
$n^{-1/\alpha}\max_{j\le n} M_i\circ f^j
\ll n^{-1/\alpha}(1+\max_{j\le n}|\tvarphi|^\delta\circ f^j)\to_p0$
by Corollary~\ref{cor:An}.
\end{pfof}

\begin{rmk} \label{rmk:intphi}
As seen in the proof of Lemma~\ref{lem:An}(c), $\tvarphi^Y$ lies in the domain of attraction of an $\alpha$-stable law,
so $\varphi^Y\in L^q(Y)$ for all $q<\alpha$.  It follows easily that
$\varphi\in L^q(X)$ for all $q<\alpha$.
\end{rmk}

\section{Moment estimates for induced observables}
\label{sec:moment}

In this section, we consider estimates for certain induced observables.
We continue to assume that $T:\Lambda\to\Lambda$ possesses a Chernov-Markarian-Zhang structure.   Our method follows~\cite[Section~5]{BalintGouezel06}.

\begin{prop} \label{prop:Lp}  
Let $H:X\to\R$ and suppose that $H\in L^q(X)$ for some $q>1$.
Define $H^Y=\sum_{\ell=0}^{\tau-1}H\circ f^\ell$.
Then $H^Y\in L^p(Y)$ for all $p<q$.
\end{prop}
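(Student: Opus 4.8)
The plan is to reduce the $L^p$ bound for $H^Y=\sum_{\ell=0}^{\tau-1}H\circ f^\ell$ to the exponential tail control on $\tau$ together with the hypothesis $H\in L^q(X)$, using a H\"older-type splitting that separates the ``number of terms'' factor from the ``size of each term'' factor. Concretely, fix $p<q$ and choose $r>1$ with $pr<q$, and let $r'$ be the conjugate exponent of $r$. First I would write, for each $y\in Y$,
\[
|H^Y(y)|\le \sum_{\ell=0}^{\tau(y)-1}|H|(f^\ell y)\le \tau(y)^{1/r'}\Big(\sum_{\ell=0}^{\tau(y)-1}|H|^r(f^\ell y)\Big)^{1/r}
\]
by H\"older's inequality, so that $|H^Y|^p\le \tau^{p/r'}\big(\sum_{\ell=0}^{\tau-1}|H|^r\circ f^\ell\big)^{p/r}=\tau^{p/r'}\big((|H|^r)^Y\big)^{p/r}$, where $(|H|^r)^Y=\sum_{\ell=0}^{\tau-1}|H|^r\circ f^\ell$.

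Next I would apply H\"older's inequality on $(Y,\mu_Y)$ itself to $\int_Y \tau^{p/r'}\big((|H|^r)^Y\big)^{p/r}\,d\mu_Y$, with some exponents $a,a'$ to be chosen, bounding it by $\big(\int_Y \tau^{ap/r'}\,d\mu_Y\big)^{1/a}\big(\int_Y ((|H|^r)^Y)^{a'p/r}\,d\mu_Y\big)^{1/a'}$. The first factor is finite because $\tau$ has exponential tails (indeed all moments), so no matter how large $ap/r'$ is, $\tau\in L^{ap/r'}(Y)$. For the second factor, I would choose $a'$ so that $a'p/r=1$, i.e.\ $a'=r/p$; then $\int_Y (|H|^r)^Y\,d\mu_Y=\int_Y\sum_{\ell=0}^{\tau-1}|H|^r\circ f^\ell\,d\mu_Y$, and by the Kac/Abramov-type identity for the tower (or directly: $\mu_\Delta$ is $\hat f$-invariant and $\pi_*\mu_\Delta=\mu_X$, with $\int_Y\tau\,d\mu_Y=\bar\tau$) this equals $\bar\tau\int_X|H|^r\,d\mu_X$, which is finite since $r<q$ so $|H|^r\in L^{q/r}\subset L^1$. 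With $a'=r/p$ the conjugate $a$ is determined, and $ap/r'$ is just some finite exponent, so the tail hypothesis handles the first factor. Assembling, $\int_Y|H^Y|^p\,d\mu_Y<\infty$, i.e.\ $H^Y\in L^p(Y)$.

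I would also need the telescoping measure identity $\int_Y\sum_{\ell=0}^{\tau-1}g\circ f^\ell\,d\mu_Y=\bar\tau\int_X g\,d\mu_X$ for $g\ge0$; this follows from the definitions in Section~\ref{sec:tower}, since $\int_Y\sum_{\ell=0}^{\tau-1}g\circ f^\ell\,d\mu_Y=\bar\tau\int_\Delta g\circ\pi\,d\mu_\Delta=\bar\tau\int_X g\,d\mu_X$, using $\mu_\Delta=(\mu_Y\times\text{counting})/\bar\tau$ and $\pi_*\mu_\Delta=\mu_X$.

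**Main obstacle.** The calculations above are entirely routine; the only point requiring care is the bookkeeping of exponents to make sure all three applications of H\"older (one pointwise over $\ell$, one over $Y$, and the implicit $L^{q/r}\subset L^1$ step) are simultaneously admissible — this is exactly where the strict inequality $p<q$ is used and where a buffer exponent $r$ with $p<pr<q$ must be inserted. Since $\tau$ has all moments, the $\tau$-side never constrains anything, so the argument goes through for every $p<q$ with no loss; indeed this is the same mechanism as in~\cite[Section~5]{BalintGouezel06}, and the analogue of~\eqref{eq:exptails} is all that is needed from the tower structure.
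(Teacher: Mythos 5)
Your overall strategy is sound and rests on the same two ingredients as the paper's proof: the exponential tails of $\tau$ (hence all moments finite) to absorb the length of the excursion, and the Kac-type identity $\int_Y\sum_{\ell=0}^{\tau-1}g\circ f^\ell\,d\mu_Y=\bar\tau\int_X g\,d\mu_X$ to transfer integrability from $X$ to $Y$. The paper organizes these differently — it decomposes over the level sets $\{\tau=n\}$, applies the triangle inequality in $L^q$ together with H\"older in the form $\|1_{\{\tau=n\}}S\|_p\le\mu_Y(\tau=n)^{1/a}\|1_{\{\tau=n\}}S\|_q$ with $1/a+1/q=1/p$, and sums the series $\sum_n n e^{-c'n}$ — whereas you use a pointwise H\"older over each excursion followed by a H\"older on $Y$. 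Either route is viable.

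However, there is a concrete error in your exponent bookkeeping. You fix $r>1$ with $pr<q$ and then apply H\"older on $(Y,\mu_Y)$ with the pair $(a,a')$ where $a'=r/p$. For this to be a legitimate H\"older pair you need $a'>1$, i.e.\ $r>p$; combined with your constraint $pr<q$ this forces $p^2<q$ (for $p\ge1$, which is the relevant case since the claim for $p<1$ follows from the case $p=1$). Hence for $p\in[\sqrt{q},q)$ — precisely the strongest instances of the proposition — no admissible $r$ exists, and with $a'<1$ the inequality you invoke goes the wrong way. The repair is immediate: drop the condition $pr<q$ and take any $r$ with $p<r\le q$, most simply $r=q$. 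Then $a'=r/p>1$ with conjugate $a=r/(r-p)$, the first factor $\big(\int_Y\tau^{ap/r'}\,d\mu_Y\big)^{1/a}$ is finite because $\tau$ has exponential tails, and the second factor equals $\big(\bar\tau\int_X|H|^r\,d\mu_X\big)^{p/r}<\infty$ since $r\le q$ and $\mu_X$ is a probability measure. With that correction your argument is complete.
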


\begin{proof}
Let $a>1$ with $1/a+1/q=1/p$.  Let $c'=c/a$, where $c>0$ is given by \eqref{eq:exptails}.  By H\"older's inequality,
\begin{align*}
{|H^Y|}_{L^p(Y)} & \ll
\sum_{n\ge1}\Big|1_{\{\tau=n\}}\sum_{\ell=0}^{n-1}H\circ f^\ell\Big|_{L^p(Y)}
 \le 
\sum_{n\ge1}\mu_Y(\tau=n)^{1/a}\Big|1_{\{\tau=n\}}\sum_{\ell=0}^{n-1}H\circ f^\ell\Big|_{L^q(Y)}
\\ & \le 
\sum_{n\ge1}e^{-c'n}\sum_{\ell=0}^{n-1}\Big|1_{\{\tau=n\}}H\circ f^\ell\Big|_{L^q(Y)}
 \ll 
\sum_{n\ge1}e^{-c'n}n {|H|}_{L^q(X)}
\ll {|H|}_{L^q(X)}<\infty,
\end{align*}
as required.
\end{proof}

\begin{lemma} \label{lem:mart}
Let $p\in(1,2]$.
Let $h\in C^\eta(\Lambda)$ with induced observables 
\[
H=\sum_{\ell=0}^{\varphi-1}h\circ T^\ell:X\to\R, \qquad
\tH^Y=\sum_{\ell=0}^{\tau-1}\tH\circ f^\ell:Y\to\R,
\]
where $\tH=H-\int_X H\,d\mu_X$.
Suppose that $H\in L^q(X)$ for some $q>p$.  Then
$\big|\max_{j\le n}|\sum_{i=0}^{j-1}\tH^Y\circ F^i|\big|_p\ll n^{1/p}$.
\end{lemma}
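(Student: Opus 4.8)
The plan is to realize $\tH^Y$ as an $L^p$-bounded function on the Gibbs-Markov base $\bar Y$ (up to a coboundary-type correction handled by the $\cB$-conditioning machinery of Section~\ref{sec:CMZ}), and then invoke a maximal inequality for Gibbs-Markov partial sums. Concretely, first observe that since $H\in L^q(X)$ for $q>p>1$, Proposition~\ref{prop:Lp} gives $H^Y\in L^{p'}(Y)$ for any $p'\in(p,q)$; subtracting the constant $\int_X H\,d\mu_X$ times $\tau$ (which lies in every $L^{p'}$ by exponential tails) shows $\tH^Y\in L^{p'}(Y)$ as well, with the relevant norm $\ll|H|_{L^q(X)}$ by the computation already in the proof of Proposition~\ref{prop:Lp}. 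In particular $\int_Y\tH^Y\,d\mu_Y=0$.

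Next I would pass to the quotient. Write $\zeta=\E(\tH^Y|\cB)$, which by the standing Chernov-Markarian-Zhang hypotheses \eqref{eq:zeta0}--\eqref{eq:zeta} is of the form $\zeta\circ\bar\pi$ with $|\zeta(y)|\le|h|_\infty\varphi^Y(a)$ on each partition element $a$ and Lipschitz-along-orbits control $|\zeta(y)-\zeta(y')|\le C\varphi^Y(a)\gamma_0^{s(y,y')}$. Since $\varphi^Y$ is constant on partition elements of $\bar Y$ and lies in $L^{p'}(\bar Y)$ (Remark~\ref{rmk:intphi} gives $\varphi^Y\in L^q$ for all $q<\alpha$, and one checks $p<\alpha$ may be assumed, or more simply one uses that $\varphi^Y$ inherits regularly varying tails of index $\alpha>p$), the function $\zeta$ lies in $L^{p'}(\bar Y)$ and has summable variation in the Gibbs-Markov sense. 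Therefore $\zeta$ belongs to the function space for which the Gibbs-Markov map $\bar F:\bar Y\to\bar Y$ admits the martingale decomposition $\zeta=\chi\circ\bar F-\chi+m$ with $m\in L^{p'}$ a martingale difference for the reverse filtration and $\chi\in L^{p'}$; this is the standard construction (Gordin / Liverani, in the Gibbs-Markov form of~\cite{Gouezel10b} or~\cite{AaronsonDenker01}). Then $\tH^Y-\zeta\circ\bar\pi$ is itself an $L^{p'}$ martingale difference with respect to the filtration generated by $\cB$ and its pushforwards, because conditioning on $\cB$ kills it by construction.

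For the maximal bound: decompose $\sum_{i=0}^{j-1}\tH^Y\circ F^i=\sum_{i=0}^{j-1}(\tH^Y-\zeta\circ\bar\pi)\circ F^i+\sum_{i=0}^{j-1}(\chi\circ\bar F-\chi)\circ\bar\pi\circ\bar F^i+\sum_{i=0}^{j-1}(m\circ\bar\pi)\circ\bar F^i$, where I use that $\bar\pi\circ F=\bar F\circ\bar\pi$ and $\zeta\circ\bar\pi\circ F^i=(\zeta\circ\bar F^i)\circ\bar\pi=\ldots$ telescopes. The telescoping coboundary contributes $\max_{j\le n}|\chi\circ\bar F^j\circ\bar\pi-\chi\circ\bar\pi|$, which is $\ll|\chi|_p\,n^{1/p}$ crudely via $\|\max_{j\le n}|\chi\circ\bar F^j|\|_p\le n^{1/p}|\chi|_p$ (or even $o(n^{1/p})$ by a Borel-Cantelli argument), certainly $\ll n^{1/p}$. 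The two martingale sums are each bounded in $L^p$-maximal norm by $Cn^{1/p}$ via Doob's $L^p$ maximal inequality together with the Burkholder/von~Bahr--Esseen inequality for $p\in(1,2]$: $\|\max_{j\le n}|\sum_{i<j}m_i|\|_p\ll(\sum_{i<n}\|m_i\|_p^p)^{1/p}=n^{1/p}\|m\|_p$ by stationarity. Summing the three pieces gives $\big|\max_{j\le n}|\sum_{i=0}^{j-1}\tH^Y\circ F^i|\big|_p\ll n^{1/p}$, with the implied constant depending only on $|h|_{C^\eta}$ and the Gibbs-Markov/tail data.

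The main obstacle is the second paragraph: making precise that $\zeta$ lands in the right Gibbs-Markov function space (summable-variation or Lipschitz-in-symbolic-metric class) so that the martingale--coboundary splitting applies, and that the resulting $\chi,m$ are genuinely in $L^{p'}\subset L^p$. This requires combining \eqref{eq:zeta0}--\eqref{eq:zeta} with the $L^{p'}$-integrability and regular variation of $\varphi^Y$ (from Lemma~\ref{lem:An}(c) / Remark~\ref{rmk:intphi}) to control both $|\zeta|_{p'}$ and its variation sums; everything else is a routine application of Doob and von~Bahr--Esseen. One should also record at the outset that $p<\alpha$ entails no loss, since the hypothesis $q>p$ and $\varphi^Y\in L^{q'}$ for all $q'<\alpha$ is exactly what is needed, and the statement is only invoked later with such $p$.
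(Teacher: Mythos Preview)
Your overall strategy---Gordin-type martingale-coboundary decomposition followed by Burkholder and Doob---matches the paper's, and your treatment of $\zeta=\E(\tH^Y|\cB)$ via the Gibbs-Markov transfer operator corresponds to the paper's verification of the backward summability condition $\sum_{n\ge1}|\E(\tH^Y|F^{-n}\cB)|_p<\infty$.  However, there is a genuine gap in your handling of the remainder $\tH^Y-\zeta\circ\bar\pi$.

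You claim this remainder is a martingale difference ``because conditioning on $\cB$ kills it''.  It is true that $\E(\tH^Y-\zeta\circ\bar\pi\,|\,F^{-1}\cB)=0$ (tower property, since $F^{-1}\cB\subset\cB$), but $\tH^Y-\zeta\circ\bar\pi$ is \emph{not} $\cB$-measurable: $\tH^Y$ depends on the full point $y\in Y$, not just on its stable leaf.  Hence $(\tH^Y-\zeta\circ\bar\pi)\circ F^{-n}$ is not $F^n\cB$-measurable, the sequence is not adapted to the filtration $\{F^n\cB\}_{n\in\Z}$, and Burkholder/Doob do not apply.  This is precisely the ``stable direction'' that the paper controls via the additional \emph{forward} summability condition
\[
\sum_{n\ge0}\big|\E(\tH^Y\,|\,F^n\cB)-\tH^Y\big|_p<\infty,
\]
proved using the exponential contraction~\eqref{eq:Ws} along stable leaves: atoms of $F^n\cB$ have diameter $\ll\gamma_0^n$, so $|\tH^Y-\E(\tH^Y|F^n\cB)|\ll\varphi^Y\gamma^n$ pointwise, and a splitting on $\{\varphi^Y\le n^{2r}\}$ versus its complement yields summability in $L^p$.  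Only after combining \emph{both} summability conditions does the paper obtain a genuinely $\cB$-measurable increment $m=\tH^Y+\chi-\chi\circ F$ with $\E(m|F^{-1}\cB)=0$.  Your decomposition needs this extra ingredient; the obstacle you flagged (placing $\zeta$ in the right Gibbs-Markov space) is by comparison routine, and is exactly what~\eqref{eq:zeta0}--\eqref{eq:zeta} are designed for.

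A minor point: you invoke Remark~\ref{rmk:intphi} to get $\varphi^Y\in L^{p'}$, but that remark relies on the stable-law hypothesis~\eqref{eq:stable}, which is not assumed in Lemma~\ref{lem:mart}.  The paper avoids this: $L^1$-integrability of $\varphi^Y$ (automatic from the setup) already suffices for the transfer-operator bound $|P^n\zeta|_\infty\ll\gamma^n$, and $\zeta\in L^{p'}$ follows directly from $\tH^Y\in L^{p'}$ since conditional expectation is an $L^{p'}$-contraction.
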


\begin{proof}
Following~\cite{BalintGouezel06}, we apply a Gordin type argument~\cite{Gordin69} to obtain an $L^p$ martingale-coboundary decomposition.

First, by Proposition~\ref{prop:Lp} we may suppose that $\tH^Y\in L^q(Y)$ for some (smaller) $q>p$.
Let $\overline{\cB}$ denote the underlying $\sigma$-algebra on $\bar Y$
and let $\cB=\bar\pi^{-1}\overline{\cB}$.
Then $\{F^n\cB,\, n\in\Z\}$ defines an increasing sequence of
$\sigma$-algebras on $Y$.
We claim that
\begin{align}
\label{eq:mart}
& \sum_{n=0}^\infty \big|\E(\tH^Y|F^n\cB)-\tH^Y\big|_p<\infty, \qquad
 \sum_{n=1}^\infty \big|\E(\tH^Y|F^{-n}\cB)\big|_p<\infty.
\end{align}

Suppose that the claim is true.  Then equivalently,
\[
 \sum_{n=0}^\infty \big|\E(\tH^Y\circ F^n|\cB)-\tH^Y\circ F^n\big|_p<\infty, \qquad
 \sum_{n=1}^\infty \big|\E(\tH^Y\circ F^{-n}|\cB)\big|_p<\infty,
\]
so the series 
\[
\chi=\sum_{n=0}^\infty (\E(\tH^Y\circ F^n|\cB)-\tH^Y\circ F^n)
+\sum_{n=1}^\infty \E(\tH^Y\circ F^{-n}|\cB),
\]
converges in $L^p(Y)$.
Define 
\begin{align} \label{eq:m}
m=\tH^Y+\chi-\chi\circ F\in L^p(Y).
\end{align}
Then
\begin{align} \label{eq:mg}
\SMALL m=\sum_{n=-\infty}^\infty (g_n-g_n\circ F)
=\sum_{n=-\infty}^\infty (g_{n+1}-g_n\circ F),
\end{align}
where $g_n=\E[\tH^Y\circ F^n|\cB]$.

Now, $g_n$ is $\cB$-measurable, while
$g_n\circ F$ is measurable with respect to $F^{-1}\cB\subset \cB$.
Hence $m$ is $\cB$-measurable.  Next, 
$g_n\circ F=\E[\tH^Y\circ F^n|\cB]\circ F=
\E[\tH^Y\circ F^{n+1}|F^{-1}\cB]$.
It follows that
\[
\E[g_n\circ F|F^{-1}\cB]=\E[\tH^Y\circ F^{n+1}|F^{-1}\cB]
=\E[\E[\tH^Y\circ F^{n+1}|\cB]|F^{-1}\cB]
=\E[g_{n+1}|F^{-1}\cB],
\]
where we used again that $F^{-1}\cB\subset \cB$.
Substituting into~\eqref{eq:mg},
we obtain \mbox{$\E[m|F^{-1}\cB]=0$}.  
Hence $\{m\circ F^{-n};n\in\Z\}$ is a martingale difference sequence with respect to the filtration $\{F^n\cB;n\in\Z\}$.

By Burkholder's inequality~\cite[Theorem~3.2]{Burkholder73},
\begin{align*}
\Big|\sum_{j=1}^{n}m\circ F^{-j}\Big|_p
& \ll \Big|\Big(\sum_{j=1}^{n}m^2\circ F^{-j}\Big)^{1/2}\Big|_p
=\Big(\int\Big(\sum_{j=1}^{n} m^2\circ F^{-j}\Big)^{p/2}\Big)^{1/p}
\\ & \le \Big(\int\sum_{j=1}^{n} |m|^p\circ F^{-j}\Big)^{1/p}
= |m|_p\,n^{1/p}.
\end{align*}
By Doob's inequality~\cite{Doob} (see also~\cite[Equation~(1.4), p.~20]{Burkholder73}),
\[
\Big|\max_{j\le n}\big|\sum_{i=0}^{j-1}m\circ F^i\big|\Big|_p
\le
2\Big|\max_{j\le n}\big|\sum_{i=1}^{j}m\circ F^{-i}\big|\Big|_p
\ll n^{1/p}.
\]
Also,
\[
\int_Y\max_{j\le n}|\chi\circ F^j-\chi|^p\le
2\sum_{j=0}^n|\chi\circ F^j|_p^p=2(n+1)|\chi|_p^p,
\]
so
$\big|\max_{j\le n}|\chi\circ F^j-\chi|\big|_p\ll n^{1/p}$.

By~\eqref{eq:m}, $\sum_{j=0}^{n-1}\tH^Y\circ F^j=
\sum_{j=0}^{n-1}m\circ F^j+\chi\circ F^n-\chi$, so the desired estimate for
$\tH^Y$ follows from the estimates for $m$ and $\chi$.

It remains to verify the claim.  The argument is identical to the one 
in~\cite[Lemma~5.3]{BalintGouezel06} except for the order of integrability.
 Hence we only sketch the argument referring 
to~\cite{BalintGouezel06}
  for the details (especially the prerequisite
 estimates for systems modelled by Young towers).

If $y,y'\in Y$ lie in the same stable leaf, then $|\tH^Y(y)-\tH^Y(y')|\ll \varphi^Y\!(y) d(y,y')^\eta$.
By~\eqref{eq:Ws},
the atoms of $F^n\cB$ have diameter at most $C\gamma_0^n$ for some $C>0$,
$\gamma_0\in(0,1)$.   Hence setting $\gamma=\gamma_0^\eta$, we have
(cf.~\cite[Estimate~(54)]{BalintGouezel06})
\[
|\tH^Y-\E(\tH^Y|F^n\cB)|\ll \varphi^Y \gamma^n.
\]
Choose $r>1$ with $1/r+1/q=1/p$.  In the case that the inducing time is large,
\[
\big|1_{\{\varphi^Y>n^{2r}\}}\tH^Y\big|_p
	\le  \mu(\varphi^Y>n^{2r})^{1/r}|\tH^Y|_q
	\le n^{-2}|\varphi^Y|_1^{1/r}|\tH^Y|_q,
\]
and similarly,
\[
\big|1_{\{\varphi^Y>n^{2r}\}}\E(\tH^Y|F^n\cB)\big|_p\le 
n^{-2}|\varphi^Y|_1^{1/r}|\E(\tH^Y|F^n\cB)|_q
\le n^{-2}|\varphi^Y|_1^{1/r}|\tH^Y|_q.
\]
Hence 
$\big|1_{\{\varphi^Y > n^{2r}\}}\{\tH^Y-\E(\tH^Y|F^n\cB)\}\big|_p
\le 2n^{-2}|\varphi^Y|_1^{1/r}|\tH^Y|_q$.
On the other hand,
\[
\big|1_{\{\varphi^Y\le n^{2r}\}}\{\tH^Y-\E(\tH^Y|F^n\cB)\}\big|_\infty\ll n^{2r}\gamma^n.
\]
Combining the last two estimates, we obtain the first part of~\eqref{eq:mart}.

Next, write 
$\E(\tH^Y|\cB)=\zeta\circ\bar\pi$ where
$\zeta\in L^1(\bar Y)$.
Let 
$P:L^1(\bar Y)\to L^1(\bar Y)$ be the transfer operator associated to $\bar F$ 
(so $\int_{\bar Y}Pv\,w\,d\bar\mu_Y=\int_{\bar Y}v\,w\circ \bar F\,d\bar\mu_Y$ for all $w\in L^\infty(\bar Y)$).
By standard methods (see for example~\cite[Corollary~2.3(a)]{MN05}), it follows from integrability of $\varphi^Y$ and the estimates~\eqref{eq:zeta0} and~\eqref{eq:zeta} that there exist constants $C>0$, $\gamma\in(0,1)$ such that $|P^n\zeta|_\infty\le C\gamma^n$.
Moreover $\E(\cdot|\bar F^{-n}\overline{\cB})=(UP)^n=U^nP^n$
where $Uv=v\circ \bar F$.
Hence
\begin{align*}
\E(\tH^Y|F^{-n}\cB) & =
\E(\E(\tH^Y|\cB)|F^{-n}\cB) 
=\E(\zeta\circ\bar\pi|F^{-n}\cB) 
\\ & =\E(\zeta\circ\bar\pi|\bar\pi^{-1}\bar F^{-n}\overline{\cB}) 
=\E(\zeta|\bar F^{-n}\overline{\cB}) \circ\bar\pi
=(U^nP^n\zeta)\circ\bar\pi,
\end{align*}
and so 
\[
\big|\E(\tH^Y|F^{-n}\cB)\big|_{L^p(Y)}
=|U^nP^n\zeta|_{L^p(\bar Y)}
=|P^n\zeta|_{L^p(\bar Y)} \le |P^n\zeta|_\infty\ll\gamma^n.
\]
Hence $\big|\E(\tH^Y|F^{-n}\cB)\big|_{L^p(Y)}$ is summable, completing the proof of~\eqref{eq:mart}.~
\end{proof}

\section{Proof of Theorems~\ref{thm:stable} and~\ref{thm:levy}}
\label{sec:proof}

In this section, we complete the proof of the main results in Section~\ref{sec:results}.   

\begin{pfof}{Theorem~\ref{thm:stable}}
Define the H\"older observable $h=v-I:\Lambda\to\R$.
As in the statement of Lemma~\ref{lem:mart},
define $H=V-I\varphi$, $\tH=V-I\tvarphi$, 
$\tH^Y=V^Y-I\tvarphi^Y$. 
By~\eqref{eq:Lp}, $H\in L^p(X)$ for some $p>\alpha$ and hence
by Lemma~\ref{lem:mart},
\begin{align} \label{eq:E}
n^{-1/\alpha}\max_{j\le n}\Big|\sum_{i=0}^{j-1}\tH^Y\circ F^i\Big|
\to_p0.
\end{align}
Hence by Lemma~\ref{lem:An}(a),
\begin{align} \label{eq:stable2}
n^{-1/\alpha}\sum_{j=0}^{n-1}V^Y\circ F^j=
n^{-1/\alpha}\sum_{j=0}^{n-1}(I\tvarphi^Y+\tH^Y)\circ F^j\to_d (\bar\tau)^{1/\alpha} I G.
\end{align}

As in the proof of Theorem~\ref{thm:M1induce}, we can suppose without loss that 
$F=T^{\varphi^Y}:Y\to Y$ is a first return map.  
As a consequence of~\eqref{eq:stable2}  and Lemma~\ref{lem:An}(b) we can apply~\cite[Theorem~A.1]{Gouezel07} (see Remark~\ref{rmk:induce}) and it follows that
$n^{-1/\alpha}\sum_{j=0}^{n-1}v\circ T^j\to_d (\int_Y\varphi^Y\,d\mu_Y)^{-1/\alpha}(\bar\tau)^{1/\alpha} I G=(\bar\varphi)^{-1/\alpha} I G$.
(In applying Remark~\ref{rmk:induce}, it should be noted that $T:\Lambda\to\Lambda$, $v$, $\varphi^Y$ are called $f:X\to X$, $V$, $\tau$ in Appendix~\ref{app:induce}.)
\end{pfof}

Recall that
$W^Y_n(t)=n^{-1/\alpha}\sum_{j=0}^{[nt]-1}V^Y\circ F^j$ 
is a c\`adl\`ag process on $Y$.

\begin{lemma}[WIP on $Y$] \label{lem:J1}
Under the assumptions of Theorem~\ref{thm:stable},
$W^Y_n\to_w (\bar\tau)^{1/\alpha} I W$
on $(Y,\mu_Y)$ in $(D[0,\infty),\cJ_1)$.
\end{lemma}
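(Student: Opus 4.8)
The plan is to decompose the induced observable exactly as in the proof of Theorem~\ref{thm:stable}, writing $V^Y=I\tvarphi^Y+\tH^Y$ with $\tH^Y=V^Y-I\tvarphi^Y$, so that
\[
W^Y_n(t)=I\,A^Y_n(t)+E_n(t),\qquad E_n(t)=n^{-1/\alpha}\sum_{j=0}^{[nt]-1}\tH^Y\circ F^j,
\]
with $A^Y_n$ as in~\eqref{eq:An}. The leading term is governed by Lemma~\ref{lem:An}(c): since multiplication by the constant $I$ is continuous on $(D[0,\infty),\cJ_1)$, the continuous mapping theorem gives $I\,A^Y_n\to_w(\bar\tau)^{1/\alpha}IW$ in $\cJ_1$. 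Everything then reduces to showing that $E_n$ is negligible, uniformly on compacts.

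For the negligibility I would invoke~\eqref{eq:E} (which is a consequence of Lemma~\ref{lem:mart} applied to $h=v-I$, valid because $H=V-I\varphi\in L^p(X)$ for some $p>\alpha$ by~\eqref{eq:Lp}): this says $n^{-1/\alpha}\max_{j\le n}\big|\sum_{i=0}^{j-1}\tH^Y\circ F^i\big|\to_p0$. Fixing $K\in\N$ and replacing $n$ by $\lceil nK\rceil$ yields $\sup_{0\le t\le K}|E_n(t)|\le n^{-1/\alpha}\max_{j\le\lceil nK\rceil}\big|\sum_{i=0}^{j-1}\tH^Y\circ F^i\big|\to_p0$, i.e.\ $E_n\to_p0$ uniformly on $[0,K]$. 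Since the $\cJ_1$ metric on $D[0,K]$ is dominated by the uniform metric, $d_{\cJ_1,[0,K]}(W^Y_n,I\,A^Y_n)\le\sup_{[0,K]}|E_n|\to_p0$, and a standard converging-together argument (\cite[Theorem~3.1]{Billingsley99}, as used elsewhere in the paper) gives $W^Y_n\to_w(\bar\tau)^{1/\alpha}IW$ in $(D[0,K],\cJ_1)$.

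Finally, because $W$ is a L\'evy process it a.s.\ has no jump at any fixed deterministic time, so the restriction maps $D[0,\infty)\to D[0,K]$ are a.s.\ continuous at $W$; hence convergence in $(D[0,K],\cJ_1)$ for every $K\in\N$ upgrades to convergence in $(D[0,\infty),\cJ_1)$ (see~\cite{Whitt}). I do not anticipate a genuine obstacle: the substantive inputs—the $\cJ_1$ functional limit law for $\tvarphi^Y$ in Lemma~\ref{lem:An}(c) (resting on Gou\"ezel and Tyran-Kami\'nska) and the $L^p$ martingale--coboundary bound of Lemma~\ref{lem:mart}—are already established, so the lemma is essentially a ``Slutsky in Skorohod space'' assembly; the only points needing a little care are the domination of $\cJ_1$ by the uniform metric and the passage from $[0,K]$ to $[0,\infty)$, both routine.
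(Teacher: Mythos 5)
Your proof is correct and follows essentially the same route as the paper: the same decomposition $W^Y_n=IA^Y_n+B^Y_n$ with $B^Y_n$ controlled uniformly on compacts via~\eqref{eq:E}, combined with Lemma~\ref{lem:An}(c) for the leading term. The extra detail you supply on the converging-together step and the passage from $[0,K]$ to $[0,\infty)$ is routine and consistent with what the paper leaves implicit.
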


\begin{proof}
Write $V^Y=I\tvarphi^Y+\tH^Y$ as in the proof of Theorem~\ref{thm:stable}.
Then $W_n^Y=IA_n^Y+B_n^Y$ where $A_n^Y$ is as in~\eqref{eq:An} and
$B_n^Y(t)=n^{-1/\alpha}\sum_{j=0}^{[nt]-1}\tH^Y\circ F^j$.
By~\eqref{eq:E}, for every $K>0$ one has that
$\sup_{[0,K]}|B_n^Y| \ll n^{-1/\alpha}\max_{j\le Kn}|\sum_{i=0}^{j-1}\tH^Y\circ F^i|\to_p0$.
Hence the result follows from Lemma~\ref{lem:An}(c).
\end{proof}

Next,  recall that
$W^X_n(t)=n^{-1/\alpha}\sum_{j=0}^{[nt]-1}V\circ f^j$ 
is a  c\`adl\`ag process 
on $X$.

\begin{lemma}[WIP on $X$] \label{lem:XM1}
Under the assumptions of Theorem~\ref{thm:stable},
$W^X_n\to_w I W$ on $(X,\mu_X)$ in $(D[0,\infty),\cM_1)$.
\end{lemma}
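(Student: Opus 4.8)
The plan is to obtain Lemma~\ref{lem:XM1} from the $\cJ_1$-WIP on $Y$ (Lemma~\ref{lem:J1}) by uninducing via the return time $\tau:Y\to\Z^+$, exactly as in the proof of Lemma~\ref{lem:An}(d) but with the observable $V$ in place of $\tvarphi$. Concretely, apply Theorem~\ref{thm:M1induce} with $i=1$, taking $\Omega=X$, $\Omega_0=Y$, $S=f$, $S_0=F$, $r=\tau$, $\phi=V$, $\Phi=V^Y$, $\psi_n=W^X_n$, $\Psi_n=W^Y_n$, $b_n=n^{1/\alpha}$, and the generic limit ``$W$'' of Theorem~\ref{thm:M1induce} taken to be $IW$; since $W$ is $\alpha$-stable Lévy the corresponding $\widetilde W$ equals $(\bar\tau)^{1/\alpha}IW$. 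The measure-theoretic hypotheses of Theorem~\ref{thm:M1induce} are part of the Young tower / Chernov-Markarian-Zhang structure. Condition~1 of Theorem~\ref{thm:M1induce} then reads $W^Y_n\to_w(\bar\tau)^{1/\alpha}IW$ in $\cM_1$, which follows from Lemma~\ref{lem:J1} because convergence in $\cJ_1$ implies convergence in the weaker $\cM_1$ topology. So the whole proof reduces to verifying condition~2, namely
\[
n^{-1/\alpha}\max_{0\le j\le n}M_1\circ F^j\to_p0 \quad\text{on }(Y,\mu_Y), \qquad
M_1=\max_{1\le\ell'\le\ell\le\tau}(V_{\ell'}-V_\ell)\wedge\max_{1\le\ell'\le\ell\le\tau}(V_\ell-V_{\ell'}),
\]
where $V_\ell=\sum_{j=0}^{\ell-1}V\circ f^j$.

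The heart of the matter — and the only place where genuine work is required — is a bound on this $M_1$. The naive estimate $M_1\le\sum_{j=0}^{\tau-1}|V|\circ f^j\le|v|_\infty\varphi^Y$ is \emph{not} good enough: by Remark~\ref{rmk:intphi} the function $\varphi^Y$ has regularly varying tails of index $\alpha$, so $n^{-1/\alpha}\max_{j\le n}\varphi^Y\circ F^j$ converges in law to a nondegenerate limit rather than to $0$. Instead I would exploit the decomposition $V=I\varphi+H$ with $H=V-I\varphi\in L^p(X)$, $p>\alpha$, from~\eqref{eq:Lp}, together with the monotonicity of $\varphi_\ell:=\sum_{j=0}^{\ell-1}\varphi\circ f^j$ in $\ell$ (which holds since $\varphi\ge1$). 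Assuming without loss that $I>0$ (the case $I<0$ is symmetric, swapping the two maxima in $M_1$), for $1\le\ell'\le\ell\le\tau$ one has $V_\ell-V_{\ell'}=I(\varphi_\ell-\varphi_{\ell'})+(H_\ell-H_{\ell'})\ge-\sum_{j=0}^{\tau-1}|H|\circ f^j$, whence $M_1\le\max_{1\le\ell'\le\ell\le\tau}(V_{\ell'}-V_\ell)\le K^Y$, where $K^Y:=\sum_{\ell=0}^{\tau-1}|H|\circ f^\ell$ is the induced observable of $|H|$.

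To finish, note that $|H|\in L^p(X)$, so by Proposition~\ref{prop:Lp} we get $K^Y\in L^{p'}(Y)$ for every $p'<p$; fix $p'\in(\alpha,p)$. Then $p'/\alpha>1$, and Markov's inequality together with $F$-invariance of $\mu_Y$ gives $\sum_{n\ge1}\mu_Y(K^Y\circ F^n>\eps n^{1/\alpha})\ll\eps^{-p'}\sum_{n\ge1}n^{-p'/\alpha}<\infty$ for every $\eps>0$, so by Borel–Cantelli $n^{-1/\alpha}K^Y\circ F^n\to0$ a.e., and then $n^{-1/\alpha}\max_{j\le n}K^Y\circ F^j\to0$ a.e.\ by the same elementary argument used in the proof of Lemma~\ref{lem:An}(d). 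Hence $n^{-1/\alpha}\max_{j\le n}M_1\circ F^j\to0$ a.e., which implies condition~2, and Theorem~\ref{thm:M1induce} delivers $W^X_n\to_w IW$ on $(X,\mu_X)$ in $(D[0,\infty),\cM_1)$. The one obstacle worth flagging is precisely this refined control of $M_1$: recognising that the naive $\varphi^Y$-bound fails and that it is the $L^p$-error term $H$ — controlled through Proposition~\ref{prop:Lp} — that must be estimated; everything else is a routine transcription of Lemma~\ref{lem:An}(d).
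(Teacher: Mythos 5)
Your proposal is correct and follows essentially the same route as the paper: uninduce via Theorem~\ref{thm:M1induce} with $i=1$, get condition~1 from Lemma~\ref{lem:J1}, and control $M_1$ by discarding the monotone part $I\varphi_\ell$ so that $M_1\le\sum_{\ell=0}^{\tau-1}|H|\circ f^\ell$, which lies in $L^{p'}(Y)$ for some $p'>\alpha$ by Proposition~\ref{prop:Lp}. The only (immaterial) difference is the last step: you use Markov plus Borel--Cantelli to get a.e.\ convergence of $n^{-1/\alpha}\max_{j\le n}M_1\circ F^j$, whereas the paper bounds $\int_Y(\max_{j\le n}M_1\circ F^j)^{p'}\,d\mu_Y\le (n+1)\int_Y(H^*)^{p'}\,d\mu_Y$ directly; both are valid.
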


\begin{proof}
We apply Theorem~\ref{thm:M1induce} (with $i=1$) to pass from $V^Y:Y\to\R$ to
$V:X\to\R$ via the inducing time $\tau:Y\to\Z^+$.
(The spaces $\Omega_0\subset \Omega$ in Theorem~\ref{thm:M1induce} correspond to the
spaces $Y\subset X$ here.  Similarly $\phi$, $\Phi$, $\psi_n$, $\Psi_n$, $r$ are called $V$, $V^Y$, $W_n^X$, $W_n^Y$, $\tau$,
and the maps $S:\Omega\to \Omega$, $S_0:\Omega_0\to \Omega_0$ are called $f:X\to X$, $F:Y\to Y$.)

Condition 1 of Theorem~\ref{thm:M1induce} is immediate from Lemma~\ref{lem:J1}.
Define $M_1:Y\to\R$,
\[
M_1  = \max_{1\le\ell'\le\ell\le \tau}  (V_{\ell'}-V_\ell)
        \wedge \max_{1\le\ell'\le\ell\le \tau}  (V_\ell-V_{\ell'}),
\]
where $V_\ell=\sum_{j=0}^{\ell-1}V\circ f^j$.
We claim that $n^{-1/\alpha}\max_{0\le j\le n}M_1\circ F^j\to0$ in $L^1(Y)$.
This implies condition~2 of Theorem~\ref{thm:M1induce}
and the result follows.

It remains to verify the claim.  
Recall that $V=I\varphi+H$ and correspondingly $V^Y=I\varphi^Y+H^Y$.
Define $H_\ell=\sum_{j=0}^{\ell-1}H\circ f^j$
and $H^*=|H|^Y=\sum_{\ell=0}^{\tau-1}|H|\circ f^\ell$.
By assumption~\eqref{eq:Lp} and Proposition~\ref{prop:Lp},
$H^*\in L^p(Y)$ for some $p>\alpha$.

Suppose that $I>0$ (the case $I<0$ is similar).
Then $I\varphi>0$ and
\begin{align*}
M_1\le \max_{1\le\ell'\le\ell\le \tau}  (V_{\ell'}-V_\ell)\le 
\max_{1\le\ell'\le\ell\le \tau}  (H_{\ell'}-H_\ell)\le H^*.
\end{align*}
Hence 
\begin{align*}
\int_Y(\max_{j\le n}M_1\circ F^j)^p\,d\mu\le 
\int_Y \sum_{j=0}^n (H^*\circ F^j)^p\,d\mu=(n+1)\int_Y {H^*}^p\,d\mu\ll n,
\end{align*}
and so
$n^{-1/p}\max_{j\le n}M_1\circ F^j$ is bounded in $L^p(Y)$ proving the claim.
\end{proof}

\begin{pfof}{Theorem~\ref{thm:levy}}
We apply Theorem~\ref{thm:M1induce} to pass from $V:X\to\R$ to $v:\Lambda\to\R$ via the return time $\varphi:X\to\Z^+$.
(This time, the spaces $\Omega_0\subset \Omega$ in Theorem~\ref{thm:M1induce} correspond to the
spaces $X\subset \Lambda$ here.  Similarly $\phi$, $\Phi$, $\psi_n$, $\Psi_n$, $r$ are called $v$, $V$, $W_n$, $W_n^X$, $\varphi$,
and the maps $S:\Omega\to \Omega$, $S_0:\Omega_0\to \Omega_0$ are called $T:\Lambda\to \Lambda$, $f:X\to X$.
Also, $M_1$ and $M_2$ are defined as in Section~\ref{sec:results}.)
\\[.75ex]
(a)
Conditions~1 and~2 of Theorem~\ref{thm:M1induce} with $i=1$ correspond to
Lemma~\ref{lem:XM1} and
the assumption on $M_1$ respectively.
\\[.75ex]
(b)
Lemma~\ref{lem:XM1} asserts convergence in the $\cM_1$ topology and hence in the $\cM_2$ topology, so condition~1 of Theorem~\ref{thm:M1induce} ($i=2$) is satisfied.
Condition~2 of Theorem~\ref{thm:M1induce} corresponds to
the assumption on $M_2$.
\end{pfof}

\section{Billiards with cusps at flat points}
\label{sec:JZ}

We consider the Jung \& Zhang example~\cite{JungZhangsub} described in Example~\ref{ex:JZ}.
Zhang~\cite{Zhang17a} showed that such billiard maps $T:\Lambda \to \Lambda$ fit in the Chernov-Markarian-Zhang framework with first return map
$f=T^\varphi:X\to X$ where $X=(\Gamma_3\cup\dots\cup \Gamma_{n_0})\times[0,\pi]$.   
Recall that $\alpha=\frac{\beta}{\beta-1}\in(1,2)$.   
Define $I_v(s)$ as in~\eqref{eq:Iv} 
for continuous functions $v:\Lambda\to\R$.

In the remainder of this section, we fix $v:\Lambda\to\R$ H\"older continuous with mean zero such that $I_v(\pi)>0$.
Define the strictly increasing, hence invertible, function $\Psi(s)= I_1(\pi)^{-1}I_1(s)$, $s\in[0,\pi]$.

\begin{prop} \label{prop:I}
Let $\delta=\eta/(\beta-1)$ where $\eta$ is the H\"older exponent of $v$.
There is a constant $C>0$ such that
for all $0\le\ell\le \varphi(x)$, $x\in X$,
\[
v_\ell(x)=\varphi(x) I_1(\pi)^{-1} I_v\circ\Psi^{-1}(\ell/\varphi(x)) + E_\ell(x), \qquad
|E_\ell(x)|
\le C\varphi(x)^{1-\delta}.
\]
\end{prop}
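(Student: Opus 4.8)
The plan is to track, excursion by excursion, what the Birkhoff sum $v_\ell$ looks like during a single sojourn near the cusp. The mechanism from the Jung--Zhang analysis is that a point $x\in X$ entering the cusp region performs $\varphi(x)$ collisions while its angular coordinate $\theta$ essentially stays near a ``frozen'' value, and the reflections alternate between the two nearly-tangent walls $\Gamma_1$ and $\Gamma_2$, so that after $2k$ collisions (for $k$ up to roughly $\varphi(x)/2$) the running sum picks up approximately $k$ copies of $v(r',\theta)+v(r'',\pi-\theta)$ evaluated at the appropriate angle. First I would recall the precise statement from~\cite{Zhang17a} (or re-derive it as in~\cite[Section~3]{JungZhangsub}) describing the dynamics of a cusp excursion: the angle parameter evolves along an almost-integrable flow whose time-$\ell$ position, after rescaling $\ell$ by the total excursion length $\varphi(x)$, is governed by the function $\Psi^{-1}$ — this is exactly why $\Psi(s)=I_1(\pi)^{-1}I_1(s)$ appears, since $I_1$ is (up to constants) the relevant ``clock'' for the excursion, and $s\mapsto I_1(s)$ is strictly increasing by definition of $I_1$ with the $(\sin\theta)^{1/\alpha}$ weight.

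The key computational step is then: given the excursion has length $N=\varphi(x)$, one writes
\[
v_\ell(x)=\sum_{m=0}^{\ell-1} v(T^m x),
\]
pairs up consecutive terms $v(T^{2k}x)+v(T^{2k+1}x)$, and uses the geometry of the cusp to identify $T^{2k}x\approx(r',\theta_k)$, $T^{2k+1}x\approx(r'',\pi-\theta_k)$ where $\theta_k$ is the angular parameter after $2k$ collisions. Summation over $k$ turns the discrete sum into a Riemann sum for $\int_0^{\theta(\ell/N)}\{v(r',\theta)+v(r'',\pi-\theta)\}\,(\text{density})\,d\theta$, and matching the density against~\eqref{eq:Iv} produces $\varphi(x)I_1(\pi)^{-1}I_v\circ\Psi^{-1}(\ell/N)$ as the main term. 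The error $E_\ell$ accumulates from two sources: (i) the difference between the true iterates $T^m x$ and their idealized positions $(r',\theta_k)$ or $(r'',\pi-\theta_k)$, which is controlled using H\"older continuity of $v$ with exponent $\eta$ together with the known rate at which $\theta$ drifts during an excursion of length $N$ (this drift is of order $N^{-(\beta-1)}$ per step, whence the exponent $\delta=\eta/(\beta-1)$ after summing $N$ terms and applying $|v(a)-v(b)|\le C d(a,b)^\eta$); and (ii) the boundary terms from pairing and from the incomplete pair when $\ell$ is odd, which contribute $O(|v|_\infty)=O(1)\le C\varphi(x)^{1-\delta}$ since $\varphi(x)$ is large on excursions into the cusp. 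I would also need to handle $x$ whose excursion does \emph{not} enter the cusp, i.e.\ $\varphi(x)$ bounded: there the statement is trivial since both $|v_\ell|$ and the main term are $O(\varphi(x))=O(1)$ and can be absorbed into $C\varphi(x)^{1-\delta}$ (equivalently, into $CE_\ell$), possibly after enlarging $C$.

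The main obstacle is step (i): making rigorous the comparison between the actual billiard orbit near a flat cusp and the idealized ``bouncing between two curves $y=\pm\beta^{-1}s^\beta$'' picture, with quantitative control of the $\theta$-drift and the position error uniformly over all excursion lengths $N$. This is precisely the technical heart of~\cite{JungZhangsub} and~\cite{Zhang17a}, so I would lean on their estimates rather than reproving them; the new content here is only the bookkeeping that converts their orbit description into the clean formula with the universal profile $I_v\circ\Psi^{-1}$ and the explicit H\"older-dependent error exponent $\delta$. A secondary subtlety is checking that $\Psi^{-1}$ is well-defined and Lipschitz-enough on $[0,1]$ so that evaluating it at $\ell/\varphi(x)$ makes sense and does not itself introduce errors larger than $O(\varphi(x)^{1-\delta})$; since $I_1$ has a nonvanishing weight $(\sin\theta)^{1/\alpha}$ that only degenerates at the endpoints $\theta=0,\pi$, one must be slightly careful near the start and end of the excursion, but there the number of collisions spent is a negligible fraction of $N$ and the contribution is again absorbed into the error term.
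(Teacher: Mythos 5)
Your proposal follows essentially the same route as the paper: both reduce to the Jung--Zhang description of a cusp excursion, convert the Birkhoff sum into a Riemann sum for $\int \tv\circ\Psi^{-1}$ (with $\tv(\theta)=\frac12\{v(r',\theta)+v(r'',\pi-\theta)\}$), change variables using $\Psi'=I_1(\pi)^{-1}(\sin\theta)^{1/\alpha}$ to produce $\varphi\, I_1(\pi)^{-1}I_v\circ\Psi^{-1}(\ell/\varphi)$, and outsource the quantitative orbit estimates to \cite[Section~6, proof of Lemma~4.4]{JungZhangsub}.

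Two points need tightening. First, the paper does not run a single approximation uniformly over the whole excursion: the Jung--Zhang estimate is invoked directly only for $0\le\ell\le\varphi/2$; the second half is treated by time reversibility applied to $v_\varphi-v_\ell$, and the value of $v_\varphi$ itself is then pinned down by matching the two expressions at $\ell=\varphi/2$ before substituting back. Your sketch implicitly assumes the orbit description (and hence the Riemann-sum identification) is valid for all $\ell$ up to $\varphi$, which is not what the cited estimates deliver; without the reversibility step and the midpoint matching, the formula for $\ell>\varphi/2$ is unjustified. Second, your error heuristic has the drift rate inverted: a per-step positional error of order $N^{-(\beta-1)}$ would, after applying the H\"older bound and summing $N$ terms, yield an error $N^{1-\eta(\beta-1)}$, i.e.\ $\delta=\eta(\beta-1)$, not the claimed $\delta=\eta/(\beta-1)$; the exponent in the proposition forces the relevant positional scale to be $N^{-1/(\beta-1)}$. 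Neither issue changes the overall strategy, but both must be repaired for the argument to close.
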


\begin{proof}
Let $\tv(\theta)=\frac12\{v(r',\theta)+v(r'',\pi-\theta)\}$.
Proceeding as in~\cite[Section 6: Proof of Lemma~4.4]{JungZhangsub}, for $0\le\ell\le \varphi/2$,
\begin{align*}
v_\ell   
=\sum_{j=0}^{\ell-1}v\circ T^j
& =\sum_{j=0}^{\ell-1}\tv\circ\Psi^{-1}(j/\varphi)+O(\varphi^{1-\delta})
 =\varphi\int_0^{\ell/\varphi}\tv\circ\Psi^{-1}\,d\theta+O(\varphi^{1-\delta})
\\ & =\varphi I_1(\pi)^{-1}\int_0^{\Psi^{-1}(\ell/\varphi)}\tv(\theta)(\sin\theta)^{1/\alpha}\,d\theta+O(\varphi^{1-\delta})
 \\ & =\varphi I_1(\pi)^{-1} I_v\circ\Psi^{-1}(\ell/\varphi)+O(\varphi^{1-\delta}).
\end{align*}
In particular, $v_{\varphi/2}=
 \varphi I_1(\pi)^{-1} I_v(\pi/2)+O(\varphi^{1-\delta})$.

For $\varphi/2\le \ell\le \varphi$, using time reversibility and the estimates in~\cite{JungZhangsub}, 
\begin{align} \label{eq:I}  \nonumber
v_\varphi  -v_\ell  =\sum_{j=\ell}^{\varphi-1} v\circ T^j
  & =\varphi\int_{\ell/\varphi}^{1}\tv\circ \Psi^{-1}\,d\theta+O(\varphi^{1-\delta})
 \\ & =\varphi I_1(\pi)^{-1}\int_{\Psi^{-1}(\ell/\varphi)}^{\Psi^{-1}(1)}\tv(\theta)(\sin\theta)^{1/\alpha}\,d\theta+O(\varphi^{1-\delta})
  \\ &  =\varphi I_1(\pi)^{-1}\{I_v(\pi)-I_v\circ\Psi^{-1}(\ell/\varphi)\}+O(\varphi^{1-\delta}).
\nonumber
\end{align}
In particular, $v_\varphi-v_{\varphi/2}=\varphi I_1(\pi)^{-1}\{I_v(\pi)-I_v(\pi/2)\}+O(\varphi^{1-\delta})$, so
$v_\varphi=\varphi I_1(\pi)^{-1} I_v(\pi)+O(\varphi^{1-\delta})$.
Substituting the final estimate into~\eqref{eq:I} completes the proof.
\end{proof}
\begin{lemma} \label{lem:JZ}
Conditions~\eqref{eq:stable} and~\eqref{eq:Lp} hold.
\end{lemma}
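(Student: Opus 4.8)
The plan is to read off condition~\eqref{eq:stable} from the work of Jung \& Zhang and to derive condition~\eqref{eq:Lp} from Proposition~\ref{prop:I}; essentially all of the analytic work is already packaged into those two inputs, so what remains is bookkeeping.

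For~\eqref{eq:stable} I would argue as follows. By Zhang~\cite{Zhang17a}, $f=T^\varphi:X\to X$ is modelled by a Young tower with exponential tails, and the cusp geometry of $\Gamma_1,\Gamma_2$ at the flat point produces a power-law tail $\mu_X(\varphi>n)\approx n^{-\alpha}$, with $\alpha=\frac{\beta}{\beta-1}$. The key computation of~\cite[Theorem~3.1]{JungZhangsub} then yields that the first return time $\varphi$ satisfies a stable limit law on $(X,\mu_X)$ with normalisation $n^{1/\alpha}$; since $\varphi\ge1$, the limit $G$ is an $\alpha$-stable law totally skewed to the right, which is exactly~\eqref{eq:stable}. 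The one thing to verify is that the centering and normalisation used in~\cite{JungZhangsub} agree with those in~\eqref{eq:stable}; they differ at most by the elementary rescalings recorded in the footnote to~\eqref{eq:Iv}, and the precise scale parameter of $G$ will in any case only be pinned down later. Granting~\eqref{eq:stable}, Remark~\ref{rmk:intphi} then gives $\varphi\in L^q(X)$ for every $q<\alpha$, a fact I will use below.

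For~\eqref{eq:Lp} I would apply Proposition~\ref{prop:I} with $\ell=\varphi(x)$. Since $\Psi(\pi)=I_1(\pi)^{-1}I_1(\pi)=1$, we have $\Psi^{-1}(1)=\pi$, so $V=v_\varphi=I\varphi+E$ with $I:=I_v(\pi)/I_1(\pi)$ and $|E|\le C\varphi^{1-\delta}$, where $\delta=\eta/(\beta-1)$. Here $I\neq0$, since $I_v(\pi)>0$ by hypothesis and $I_1(\pi)=\int_0^\pi(\sin\theta)^{1/\alpha}\,d\theta>0$, and $\delta\in(0,1)$, since $\eta\le1<\beta-1$. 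Hence the interval $(\alpha,\alpha/(1-\delta))$ is nonempty; choosing $p$ in it gives $p(1-\delta)<\alpha$, so $\varphi^{1-\delta}\in L^p(X)$ and therefore $|V-I\varphi|=|E|\le C\varphi^{1-\delta}\in L^p(X)$ with $p>\alpha$. This is~\eqref{eq:Lp}.

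I do not expect a genuine obstacle here: the substantive estimates have already been carried out in Proposition~\ref{prop:I} and in~\cite[Theorem~3.1]{JungZhangsub}. The only steps requiring a little care are the convention-matching between~\cite{JungZhangsub} and~\eqref{eq:stable}, and the elementary check that $\delta<1$ (so that an admissible exponent $p>\alpha$ exists), which holds because the H\"older exponent $\eta\le1$ while $\beta-1>1$.
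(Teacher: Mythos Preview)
Your proposal is correct and follows essentially the same approach as the paper: cite \cite[Theorem~3.1]{JungZhangsub} for~\eqref{eq:stable}, then apply Proposition~\ref{prop:I} at $\ell=\varphi$ to get $V=I\varphi+E$ with $|E|\ll\varphi^{1-\delta}$ and invoke Remark~\ref{rmk:intphi} to choose $p\in(\alpha,\alpha/(1-\delta))$. Your additional checks that $I\neq0$ and $\delta<1$ are more explicit than the paper's treatment but entirely in the same spirit.
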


\begin{proof}
Condition~\eqref{eq:stable} holds by~\cite[Theorem~3.1]{JungZhangsub}.
Taking $\ell=\varphi(x)$ in Proposition~\ref{prop:I},
$V=I\varphi+E$ where $I=I_v(\pi)/I_1(\pi)$ and
$|E|\ll \varphi^{1-\delta}$ for some $\delta>0$.  
Choose $p\in(\alpha,\alpha/(1-\delta))$.   Then 
$\varphi^{(1-\delta)p}$ is integrable by Remark~\ref{rmk:intphi} and
$\int_X |E|^p\,d\mu_X\ll
\int_X \varphi^{(1-\delta)p}\,d\mu_X$,
so~\eqref{eq:Lp} is satisfied.
\end{proof}

Note that Theorem~\ref{thm:JZ} is an immediate
consequence of Theorem~\ref{thm:stable} and Lemma~\ref{lem:JZ}.

\begin{cor} \label{cor:JZus}
The conditions on $I_v(s)$ in Theorems~\ref{thm:JZus1} and~\ref{thm:JZus2} are sufficient for the WIP.
In particular, Theorem~\ref{thm:JZus2} holds.
\end{cor}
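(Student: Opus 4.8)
The statement to prove is Corollary~\ref{cor:JZus}: the conditions on $I_v$ appearing in Theorems~\ref{thm:JZus1} and~\ref{thm:JZus2} are sufficient for the corresponding WIP. Since Lemma~\ref{lem:JZ} gives us \eqref{eq:stable} and \eqref{eq:Lp}, Theorem~\ref{thm:levy} applies, and the whole task reduces to translating the geometric hypotheses on $I_v$ into the probabilistic hypotheses $n^{-1/\alpha}\max_{j\le n}M_i\circ f^j\to_p0$ of Theorem~\ref{thm:levy}. The natural strategy is: (i) use Proposition~\ref{prop:I} to express $v_\ell(x)$ for $0\le\ell\le\varphi(x)$ in terms of the profile function $I_v\circ\Psi^{-1}$ up to an error $O(\varphi(x)^{1-\delta})$; (ii) from this, bound $M_1(x)$ and $M_2(x)$ above by (a multiple of) $\varphi(x)$ times a quantity built from the oscillation of $I_v$, plus an error $O(\varphi(x)^{1-\delta})$; (iii) observe that under the hypothesis of Theorem~\ref{thm:JZus1} the $I_v$-term in the bound for $M_1$ vanishes, and under the hypothesis of Theorem~\ref{thm:JZus2} the $I_v$-term in the bound for $M_2$ vanishes, leaving only the error term $O(\varphi^{1-\delta})$; (iv) apply Corollary~\ref{cor:An} (with exponent $1-\delta\in(0,1)$) to conclude $n^{-1/\alpha}\max_{j\le n}M_i\circ f^j\to_p 0$, and invoke Theorem~\ref{thm:levy}.

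The computational heart is step (ii)--(iii). Write $P(u)=I_1(\pi)^{-1}I_v\circ\Psi^{-1}(u)$ for $u\in[0,1]$, so that Proposition~\ref{prop:I} reads $v_\ell(x)=\varphi(x)P(\ell/\varphi(x))+E_\ell(x)$ with $|E_\ell(x)|\le C\varphi(x)^{1-\delta}$. For $M_1$: the condition in Theorem~\ref{thm:JZus1} says $\tv\ge0$, equivalently $I_v$ (hence $P$) is nondecreasing on $[0,1]$; then for $1\le\ell'\le\ell\le\varphi$ we have $v_{\ell'}-v_\ell=\varphi\{P(\ell'/\varphi)-P(\ell/\varphi)\}+(E_{\ell'}-E_\ell)\le 2C\varphi^{1-\delta}$, so $\max_{\ell'\le\ell}(v_{\ell'}-v_\ell)\le 2C\varphi^{1-\delta}$, hence $M_1\le 2C\varphi^{1-\delta}$. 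For $M_2$, using the concise formula $M_2=\max_\ell v_\ell-\min_\ell v_\ell-|V|$: the hypothesis of Theorem~\ref{thm:JZus2} is that $I_v(s)\in[0,I_v(\pi)]$ for all $s$, i.e.\ $P(u)\in[0,P(1)]$ for all $u\in[0,1]$ (recall $I_v(\pi)>0$); since $v_0=0$ and $V=v_\varphi=\varphi P(1)+E_\varphi$, we get $\max_\ell v_\ell\le \varphi P(1)+C\varphi^{1-\delta}$ and $\min_\ell v_\ell\ge -C\varphi^{1-\delta}$ (using $P\ge0$ and $P\le P(1)$), so that $\max_\ell v_\ell-\min_\ell v_\ell\le \varphi P(1)+2C\varphi^{1-\delta}$, while $|V|=|\varphi P(1)+E_\varphi|\ge \varphi P(1)-C\varphi^{1-\delta}$; subtracting gives $M_2\le 3C\varphi^{1-\delta}$. (One should be slightly careful that the $O(\varphi^{1-\delta})$ constants are uniform and that the formula for $M_2$ is applied with the correct endpoints $0\le\ell\le\varphi$; the estimate $v_0=0$ is automatic.)

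The main obstacle, such as it is, is purely bookkeeping: Proposition~\ref{prop:I} already does the genuine billiard analysis, so one only needs to verify that the ``profile'' $P$ absorbs the full oscillation of $v_\ell$ over an excursion and that the two sign/range conditions exactly kill the leading-order profile contributions to $M_1$ and $M_2$ respectively. Once $M_i\le C\varphi^{1-\delta}$ is established, Proposition~\ref{prop:dom} (equivalently, a direct appeal to Corollary~\ref{cor:An}) finishes the argument. Concretely:

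\begin{proof}
By Lemma~\ref{lem:JZ}, conditions \eqref{eq:stable} and \eqref{eq:Lp} hold, so Theorem~\ref{thm:levy} is applicable; it suffices to check its hypothesis on $M_i$.

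Write $P(u)=I_1(\pi)^{-1}I_v\circ\Psi^{-1}(u)$ for $u\in[0,1]$, so by Proposition~\ref{prop:I} there is $C>0$ with
\[
v_\ell(x)=\varphi(x)\,P(\ell/\varphi(x))+E_\ell(x),\qquad |E_\ell(x)|\le C\varphi(x)^{1-\delta},
\]
for all $x\in X$, $0\le\ell\le\varphi(x)$, where $\delta=\eta/(\beta-1)\in(0,1)$. Note $v_0=0$ and $V=v_\varphi=\varphi P(1)+E_\varphi$.

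Suppose first the condition of Theorem~\ref{thm:JZus1}, i.e.\ $I_v$ is nondecreasing on $[0,\pi]$; equivalently $P$ is nondecreasing on $[0,1]$. For $1\le\ell'\le\ell\le\varphi(x)$,
\[
v_{\ell'}(x)-v_\ell(x)=\varphi(x)\{P(\ell'/\varphi(x))-P(\ell/\varphi(x))\}+E_{\ell'}(x)-E_\ell(x)\le 2C\varphi(x)^{1-\delta}.
\]
Hence $\max_{1\le\ell'\le\ell\le\varphi}(v_{\ell'}-v_\ell)\le 2C\varphi^{1-\delta}$, and therefore $M_1\le 2C\varphi^{1-\delta}$ almost everywhere. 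By Proposition~\ref{prop:dom} (with $i=1$), $n^{-1/\alpha}\max_{j\le n}M_1\circ f^j\to_p0$, so Theorem~\ref{thm:levy}(a) gives $W_n\to_w(\bar\varphi)^{-1/\alpha}IW$ in $(D[0,\infty),\cM_1)$. This is the WIP asserted in Theorem~\ref{thm:JZus1}.

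Now suppose the condition of Theorem~\ref{thm:JZus2}, i.e.\ $I_v(s)\in[0,I_v(\pi)]$ for all $s\in[0,\pi]$; equivalently $0\le P(u)\le P(1)$ for all $u\in[0,1]$, where $P(1)=I>0$. Using the formula $M_2=\max_{0\le\ell\le\varphi}v_\ell-\min_{0\le\ell\le\varphi}v_\ell-|V|$ and $P\in[0,P(1)]$,
\[
\max_{0\le\ell\le\varphi}v_\ell\le \varphi P(1)+C\varphi^{1-\delta},\qquad
\min_{0\le\ell\le\varphi}v_\ell\ge -C\varphi^{1-\delta},
\]
while $|V|\ge \varphi P(1)-C\varphi^{1-\delta}$. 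Subtracting,
\[
M_2\le \big(\varphi P(1)+C\varphi^{1-\delta}\big)-\big(-C\varphi^{1-\delta}\big)-\big(\varphi P(1)-C\varphi^{1-\delta}\big)=3C\varphi^{1-\delta}
\]
almost everywhere. By Proposition~\ref{prop:dom} (with $i=2$), $n^{-1/\alpha}\max_{j\le n}M_2\circ f^j\to_p0$, so Theorem~\ref{thm:levy}(b) gives $W_n\to_w(\bar\varphi)^{-1/\alpha}IW$ in $(D[0,\infty),\cM_2)$, proving Theorem~\ref{thm:JZus2}.
\end{proof}
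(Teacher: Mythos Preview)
Your proof is correct and follows essentially the same approach as the paper: use Proposition~\ref{prop:I} to reduce the excursion $v_\ell$ to a profile term plus an $O(\varphi^{1-\delta})$ error, observe that the hypotheses on $I_v$ kill the profile contribution to $M_i$, and invoke Proposition~\ref{prop:dom}. The only cosmetic difference is that for $M_2$ you use the concise formula $M_2=\max_\ell v_\ell-\min_\ell v_\ell-|V|$ (from the Remark after Theorem~\ref{thm:levy}) whereas the paper works directly with the first branch of the $\wedge$-definition; both routes give the same bound $M_2\le 3C\varphi^{1-\delta}$.
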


\begin{proof}
We verify the assumptions of Theorem~\ref{thm:levy}.
Conditions~\eqref{eq:stable} and~\eqref{eq:Lp} hold by Lemma~\ref{lem:JZ}.
Hence it suffices to prove that 
$n^{-1/\alpha}\max_{j\le n}M_i\circ f^j\to_p0$ where $i\in\{1,2\}$ respectively.

First suppose that $s\mapsto I_v(s)$ is nondecreasing.
Note that $\Psi$ is increasing, so $v_\ell-E_\ell$ is a nondecreasing function of $\ell$.
Hence by Proposition~\ref{prop:I}, 
\[
M_1\le 
\max_{1\le\ell'\le\ell\le \varphi}  (v_{\ell'}-v_\ell)
\le \max_{1\le\ell'\le\ell\le \varphi}  (E_{\ell'}-E_\ell)
\le 2C\varphi^{1-\delta}.
\]
By Proposition~\ref{prop:dom},
$n^{-1/\alpha}\max_{j\le n}M_1\circ f^j\to_p0$.

Next suppose that $I_v(s)\in[0,I_v(\pi)]$ for all $s$.
Then $v_\ell\ge E_\ell$ and $v_\varphi-v_\ell\ge E_\varphi-E_\ell$
for $0\le \ell\le\varphi$.
By Proposition~\ref{prop:I},
\[
M_2  \le \max_{0 \le\ell\le \varphi}   (-v_\ell)
	+ \max_{0 \le\ell\le \varphi}   (v_\ell-v_\varphi) 
\le \max_{0 \le\ell\le \varphi}   (-E_\ell)
	+ \max_{0 \le\ell\le \varphi}   (E_\ell-E_\varphi) 
\le 3C\varphi^{1-\delta}.
\]
Again, it follows from Proposition~\ref{prop:dom} that
$n^{-1/\alpha}\max_{j\le n}M_2\circ f^j\to_p0$.
\end{proof}

It remains to prove necessity of the conditions for the WIP in Theorem~\ref{thm:JZus1}.
We require one further result from~\cite{JungZhangsub}.
\begin{prop} \label{prop:notprob}
$n^{-1/\alpha}\max_{0\le j\le n}\varphi\circ f^j\not\to_p0$.
\end{prop}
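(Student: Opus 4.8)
The plan is to prove the equivalent assertion that $\liminf_n\mu_X\big(\max_{0\le j\le n}\varphi\circ f^j>\eps n^{1/\alpha}\big)>0$ for some $\eps>0$, by a second moment argument, using a lower tail bound for $\varphi$ as the one further input from~\cite{JungZhangsub}. Recall that~\cite[Theorem~3.1]{JungZhangsub} establishes the stable law~\eqref{eq:stable} for $\varphi$ on $X$; the estimates there on the measures of the cusp cells identify the order of the tail, giving in particular constants $c>0$, $t_0\ge1$ with $\mu_X(\varphi>t)\ge c\,t^{-\alpha}$ for all $t\ge t_0$. Arguing by contradiction, I would assume $n^{-1/\alpha}\max_{0\le j\le n}\varphi\circ f^j\to_p0$ on $(X,\mu_X)$, fix $\eps>0$ small enough that $c\,\eps^{-\alpha}\ge1$, put $s_n=\eps n^{1/\alpha}$, and consider the integer-valued count $Z_n=\sum_{j=0}^{n}1_{\{\varphi\circ f^j>s_n\}}$, so that $\{\max_{0\le j\le n}\varphi\circ f^j>s_n\}=\{Z_n\ge1\}=\{Z_n>0\}$.

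The two moments are then immediate from $f$-invariance of $\mu_X$. First,
\[
\int_X Z_n\,d\mu_X=(n+1)\,\mu_X(\varphi>\eps n^{1/\alpha})\ge c\,\eps^{-\alpha}\ge1\qquad(n\text{ large}).
\]
Second, by invariance the double sum in $\int_X Z_n^2\,d\mu_X$ reduces to lags, and here I would feed in the second ingredient drawn from~\cite{JungZhangsub} (from the same cusp-cell geometry, together with the fact that $f$ is modelled by a Young tower with exponential tails): the uniform decorrelation bound
\[
\mu_X\big(\varphi>s,\ \varphi\circ f^k>s\big)\le C\,\mu_X(\varphi>s)^2\qquad\text{for all }k\ge1,\ s\ge t_0,
\]
i.e.\ the statement that two deep passages out of $X$, at any positive time lag, are asymptotically uncorrelated. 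Granting it,
\[
\int_X Z_n^2\,d\mu_X=(n+1)\mu_X(\varphi>s_n)+2\sum_{k=1}^{n}(n+1-k)\,\mu_X\big(\varphi>s_n,\varphi\circ f^k>s_n\big)\le\int_X Z_n\,d\mu_X+C\Big(\int_X Z_n\,d\mu_X\Big)^2,
\]
so by the Cauchy--Schwarz bound $\mu_X(Z_n>0)\ge(\int_X Z_n\,d\mu_X)^2/\int_X Z_n^2\,d\mu_X\ge(1+C)^{-1}$ for all large $n$. This contradicts $n^{-1/\alpha}\max_{0\le j\le n}\varphi\circ f^j\to_p0$ and completes the proof.

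The moment computations are routine; the work is entirely in the uniform-in-$k$ decorrelation bound, and specifically in its small-lag range. Exponential decay of correlations for $f$, applied to the indicators $1_{\{\varphi>s\}}$ (which are constant on the cells of the natural partition), only yields the bound for $k$ above a fixed multiple of $\log s$; the range $1\le k\ll\log s$ cannot be treated by the trivial bound $\mu_X(\varphi>s,\varphi\circ f^k>s)\le\mu_X(\varphi>s)$, since summing that would contribute a term of size comparable to $\eps^{-\alpha}\log n$ to the second moment and break the estimate. For small lags one instead has to use the billiard geometry directly: a passage making more than $s$ collisions confines the orbit data to a strip of measure comparable to $\mu_X(\varphi>s)$ whose image under the return map to $X$ is distorted by only a bounded factor, so the portion producing another deep passage within the next $k$ returns again has relative measure comparable to $\mu_X(\varphi>s)$ — precisely the kind of cell estimates carried out in~\cite{JungZhangsub}. (If one reads off from~\cite{JungZhangsub} that the point process of deep-passage times converges to a Poisson process, the proposition is immediate.) Finally, I would note that one cannot sidestep this by inducing to the Gibbs--Markov map $F$ on $Y$: although $n^{-1/\alpha}\max_{0\le j\le n}\varphi^Y\circ F^j\not\to_p0$ on $Y$ is easy there (e.g.\ from Lemma~\ref{lem:An}(c) and a.s.-continuity of the maximal-jump functional in the $\cJ_1$ topology, using $|\varphi^Y-\tau\bar\varphi|\le\varphi^Y+\bar\varphi\tau$ and $\max_{j\le n}\tau\circ F^j=o(n^{1/\alpha})$ a.e.), un-inducing it back to $X$ loses a factor $\log n$ coming from $\max_{j\le n}\tau\circ F^j$, which is fatal for the present statement.
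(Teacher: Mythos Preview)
Your parenthetical remark --- that the proposition is immediate once one reads off from~\cite{JungZhangsub} that the point process of rescaled return times converges --- \emph{is} the paper's proof. The paper sets $\nu_n=\sum_{j=1}^n\delta_{n^{-1/\alpha}\varphi\circ f^j}$ and cites~\cite[Eq.~(3.7) and Section~3.2]{JungZhangsub} for the fact that $\mu(\nu_n((1,\infty))=0)\to c$ with $c<1$; since $\{\nu_n((1,\infty))\ge1\}=\{n^{-1/\alpha}\max_{1\le j\le n}\varphi\circ f^j>1\}$, the conclusion follows in two lines.

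Your main second-moment route is structurally sound but not complete as written. The crucial input --- the bound $\mu_X(\varphi>s,\,\varphi\circ f^k>s)\le C\,\mu_X(\varphi>s)^2$ uniformly over \emph{all} $k\ge1$ --- is asserted rather than established, and you correctly flag the small-lag range $1\le k\ll\log s$ as the genuine obstacle. You are right that anti-clustering estimates of this flavour are what underlie the point-process convergence in~\cite{JungZhangsub}, but you have not checked that the uniform-in-$k$ bound (as opposed to a weaker $\mathcal D'$-type sum condition) is actually what is proved there, and your sketch of the billiard geometry for $k=1$ is not a proof. Even if the bound can be extracted, you would be mining an intermediate step from inside an argument whose \emph{conclusion} already yields the proposition directly. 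The second-moment scaffolding therefore buys nothing here: promote the parenthetical to the proof and discard the rest. (Your closing observation that inducing to $Y$ loses a $\log n$ factor is correct and a useful sanity check, but it is likewise superseded by the direct citation.)
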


\begin{proof}
Define $\nu_n=\sum_{j=1}^n \delta_{n^{-1/\alpha}\varphi\circ f^j}$.
This is the expression in~\cite[Eq.~(3.15)]{JungZhangsub}.
By~\cite[Eq.~(3.7) and Section~3.2]{JungZhangsub}, $\mu(\nu_n((1,\infty))=0)\to c<1$.
In particular, 
$\mu(n^{-1/\alpha}\max_{j\le n}\varphi\circ f^j>1)=
\mu(\nu_n(1,\infty)\ge1)\not\to0$.
\end{proof}

\begin{pfof}{Theorem~\ref{thm:JZus1}}
Suppose that $I_v(s)$ is not monotone.  Then
there exists $0<s_1<s_2<\pi$ such that $I_v(s_2)<I_v(s_1)$.

For each $x\in X$,
set $\ell_r(x)=[\varphi(x)\Psi(s_r)]$ for $r=1,2$.
Then $0\le\ell_1\le \ell_2\le \varphi$.
By Proposition~\ref{prop:I},
$v_{\ell_r}=\varphi I_v(s_r)+O(\varphi^{1-\delta})$, so
\[
v_{\ell_1}-v_{\ell_2}=c_1\varphi+O(\varphi^{1-\delta}),\qquad
V=c_2\varphi+O(\varphi^{1-\delta}),
\]
where $c_1,c_2>0$.
Hence $M_1\ge c\varphi+O(\varphi^{1-\delta})$ where
$c=c_1\wedge c_2>0$.
By Proposition~\ref{prop:notprob},
$n^{-1/\alpha}\max_{j\le n}M_1\circ f^j\not\to_p0$.
By Proposition~\ref{prop:converse}, $W_n\not\to_w W$ in $\cM_1$.
The other direction was proved in 
Corollary~\ref{cor:JZus} so this completes the proof.
\end{pfof}

\appendix

\section{Inducing stable laws in both directions}
\label{app:induce}

We assume the set up from Section~\ref{sec:tower} with measure-preserving transformations $f$, $F=f^\tau$ and $\hf$ on probability spaces
$(X,\mu_X)$, $(Y,\mu_Y)$ and $(\Delta,\mu_\Delta)$ respectively.
Let $\pi:\Delta\to X$ be the measure-preserving semiconjugacy $\pi(y,\ell)=f^\ell y$ and set $\bar \tau=\int_Y\tau\,d\mu_Y$.  
We assume in addition that the probability measures $\mu_X$, $\mu_Y$, $\mu_\Delta$ are ergodic.

In the following result, based on~\cite{Gouezel07,MT04}, we relate limit theorems on $X$ and $Y$.

\begin{thm} \label{thm:induce}
Let $V\in L^1(X)$ with $\int_X V\,d\mu_X=0$.
Define the induced observable 
\[
\SMALL V^Y:Y\to\R,\qquad V^Y=\sum_{\ell=0}^{\tau-1}V\circ f^\ell,
\]
and the Birkhoff sums
\[
\SMALL V_n=\sum_{j=0}^{n-1}V\circ f^j, \qquad
V^Y_n=\sum_{j=0}^{n-1}V^Y\circ F^j, \qquad
\tau_n=\sum_{j=0}^{n-1}\tau\circ F^j, \quad
n\ge 1.
\]
Let $G$ be a random variable.
Let $b_n>0$ be a sequence with $b_n\to\infty$,
such that $\inf_{n\ge1} b_n/b_{[\bar\tau^{-1}n+cb_n]}>0$ for each $c>0$.
Assume that \mbox{$b_n^{-1}(\tau_n-n\bar\tau)\to_p0$} as $n\to\infty$.
Then the following are equivalent:
\begin{itemize}
\item[(a)] $b_n^{-1}V_n\to_d G$ on $(X,\mu_X)$ as $n\to\infty$.
\item[(b)] $b_n^{-1}V^Y_{[n/\bar\tau]}\to_d G$ on $(Y,\mu_Y)$ as $n\to\infty$.
\end{itemize}
\end{thm}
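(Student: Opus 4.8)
The strategy is to rephrase (a) and (b) as limit laws for Birkhoff sums of the \emph{same} observable on the \emph{same} probability space, linked by the telescoping identity for the induced observable, and then to absorb the error coming from incomplete excursions. The first step is to reduce to the case in which $\tau$ is the first return time to $Y$. Lift $V$ to $\hat V=V\circ\pi$ on $\Delta$; since $\pi$ is a measure-preserving semiconjugacy, $b_n^{-1}V_n\to_d G$ on $(X,\mu_X)$ is equivalent to $b_n^{-1}\hat V_n\to_d G$ on $(\Delta,\mu_\Delta)$, and under the identification of $Y$ with the base $\{(y,0):y\in Y\}\subset\Delta$ the map $F$ becomes the first return map, $\tau$ the first return time, $V^Y$ the induced observable of $\hat V$, and $\mu_Y$ the normalised restriction $\bar\tau\,\mu_\Delta|_Y$, which is \emph{absolutely continuous} with respect to $\mu_\Delta$. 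This last point is essential: by the Eagleson/Zweim\"uller equivalence between distributional convergence and strong distributional convergence (the principle already used in Remark~\ref{rmk:SDC}), $b_n^{-1}\hat V_n\to_d G$ on $(\Delta,\mu_\Delta)$ is equivalent to $b_n^{-1}\hat V_n\to_d G$ on $(\Delta,\mu_Y)$. Hence both (a) and (b) become limit laws on the single space $(Y,\mu_Y)$, for the sequences $y\mapsto V_n(y)$ and $y\mapsto V^Y_{[n/\bar\tau]}(y)=V_{\tau_{[n/\bar\tau]}(y)}(y)$ respectively, the last equality being the telescoping identity $V^Y_m=\sum_{\ell=0}^{\tau_m-1}V\circ f^\ell=V_{\tau_m}$ valid on $Y$.

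The second step proves the equivalence of these two limit laws on $(Y,\mu_Y)$. Put $\kappa_n=\tau_{[n/\bar\tau]}$. Applying the hypothesis $b_n^{-1}(\tau_n-n\bar\tau)\to_p0$ at index $[n/\bar\tau]$, together with the growth hypothesis $\inf_n b_n/b_{[\bar\tau^{-1}n+cb_n]}>0$ (which is exactly what keeps the norming $b$ comparable at the fluctuating indices that appear here, so that the normings on the $Y$-time and $X$-time scales are compatible), one obtains $\kappa_n=n+o_p(b_n)$. Then $V_{\kappa_n}-V_n$ is a block Birkhoff sum over $|\kappa_n-n|=o_p(b_n)$ consecutive iterates; a maximal inequality combined with the tightness furnished by (a) or (b), again using the growth properties of $b_n$, shows such a block is $o_p(b_n)$, whence $b_n^{-1}V_{\kappa_n}\to_d G\iff b_n^{-1}V_n\to_d G$. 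For (a)$\Rightarrow$(b) this is applied to the identity $V^Y_{[n/\bar\tau]}=V_{\kappa_n}$ directly. For (b)$\Rightarrow$(a) one writes, for a.e.\ $x$, $V_n(x)=V_{\sigma(x)}(x)+V^Y_{N_n(x)}(x')+V_{m_n(x)}(F^{N_n(x)}x')$, where $\sigma(x)$ is the first hitting time of $Y$, $x'=f^{\sigma(x)}x\in Y$, $N_n(x)$ is the number of returns to $Y$ before time $n$, and $0\le m_n(x)<\tau(F^{N_n(x)}x')$; here $N_n=[n/\bar\tau]+o_p(b_n)$ is the ``inverse'' form of the $\tau_n$ estimate, and the middle term is treated as above (after transferring the limit law on $(Y,\mu_Y)$ for $V^Y$-Birkhoff sums with a random index to one with the deterministic index $[n/\bar\tau]$).

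The principal obstacle is showing that the two boundary contributions are negligible after dividing by $b_n$. The leading term $b_n^{-1}V_{\sigma(x)}(x)$ is harmless, since $\sigma$ and $V_\sigma$ are a.e.\ finite and $b_n\to\infty$. The trailing incomplete-excursion term $b_n^{-1}V_{m_n(x)}(F^{N_n(x)}x')$ is the delicate one: it is dominated by $b_n^{-1}W(F^{N_n(x)}x')$ with $W=\max_{0\le k<\tau}|V_k|\le|V|^Y$, and although $N_n\sim n/\bar\tau\to\infty$ gives $W\circ F^{N_n}=o(N_n)$ a.e.\ by the ergodic theorem (using $|V|^Y\in L^1(\mu_Y)$ by Kac), one needs the sharper statement $W\circ F^{N_n}=o(b_n)$, and more precisely $b_n^{-1}\max_{j\le N_n}W\circ F^j\to_p0$. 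This requires a quantitative tail bound on $\tau$, hence on $W$; the hypothesis $b_n^{-1}(\tau_n-n\bar\tau)\to_p0$ is precisely what supplies it (a converse-to-the-limit-theorem argument shows $\tau$ has tails decaying fast enough relative to $b_n$), and feeding this into a Borel--Cantelli argument along $F$-orbits, or equivalently truncating $V$ at a slowly growing threshold, yields the required bound. Combining the main and boundary terms, and passing back from $(Y,\mu_Y)$ to $(X,\mu_X)$ by strong distributional convergence, establishes both implications; the bookkeeping of these estimates follows~\cite{Gouezel07,MT04}.
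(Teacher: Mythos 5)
Your architecture is essentially the paper's: lift $V$ to $\hV=V\circ\pi$ on the tower, use the Eagleson--Zweim\"uller equivalence to move between $\mu_\Delta$ and $\mu_Y\ll\mu_\Delta$, introduce the lap number $N_n$, and reduce everything to comparing a randomly indexed with a deterministically indexed Birkhoff sum. For that comparison the paper works on the induced level: after passing to the natural extension one writes $V^Y_{N_n}-V^Y_{[n/\bar\tau]}=V^Y_{\tN_n}\circ F^{[n/\bar\tau]}$ with $\tN_n=N_n-[n/\bar\tau]=o_p(b_n)$ (which follows from $b_n^{-1}(\tau_n-n\bar\tau)\to_p0$ and the growth condition on $b_n$, as you say), and then splits according to whether $|\tN_n|\le N_0$ (dominate by a fixed $L^1$ function, divided by $b_n\to\infty$) or $|\tN_n|>N_0$ (Birkhoff gives $|V^Y_m|\le\eps|m|$ for $|m|$ large, and $b_n^{-1}|\tN_n|\to_p0$). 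Your appeal to ``a maximal inequality combined with tightness'' for the block $V_{\kappa_n}-V_n$ is not a proof as stated --- no maximal inequality is available for a general $L^1$ observable --- but it is repairable along the lines just described, so I regard this as vagueness rather than error.

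The genuine gap is your treatment of the final incomplete excursion, which you single out as the principal obstacle and propose to handle via $b_n^{-1}\max_{j\le N_n}W\circ F^j\to_p0$ with $W=\max_{0\le k<\tau}|V_k|$, deriving tail bounds on $\tau$ from $b_n^{-1}(\tau_n-n\bar\tau)\to_p0$ and then applying Borel--Cantelli or truncation. This does not work and is not needed. Convergence in probability of the centred Birkhoff sums of $\tau$ does not yield tail bounds on $\tau$ strong enough for a Borel--Cantelli argument along orbits, and the uniform maximum you are after can genuinely fail at the relevant scale (compare Proposition~\ref{prop:notprob}, where $n^{-1/\alpha}\max_{j\le n}\varphi\circ f^j\not\to_p0$); Theorem~\ref{thm:induce} assumes nothing that would force it. The point you are missing is that for a one-dimensional distributional limit at a single time $n$ only the \emph{one} excursion in progress at time $n$ contributes, not all excursions up to time $n$. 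On the tower this contribution is exactly $H(\hf^ny)$, where $H(y,\ell)=\sum_{\ell'=0}^{\ell-1}\hV(y,\ell')$ is a single fixed measurable function on $\Delta$; hence
\[
\mu_\Delta\big(b_n^{-1}|H\circ\hf^n|\ge a\big)=\mu_\Delta\big(b_n^{-1}|H|\ge a\big)\to0
\]
by measure preservation and $b_n\to\infty$, with no integrability or tail hypotheses whatsoever. The uniform-in-$j$ control of all excursions that you are trying to manufacture is precisely what is required for the \emph{functional} limit theorems (the $M_i$ hypotheses in Theorem~\ref{thm:M1induce}), not for the scalar statement proved here.
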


\begin{rmk} \label{rmk:induce}
It is a special case of~\cite[Theorem~A.1]{Gouezel07} that (b) implies (a).  
Moreover, instead of condition $b_n^{-1}(\tau_n-n\bar\tau)\to_p0$ it suffices that $b_n^{-1}(\tau_n-n\bar\tau)$ is tight in~\cite[Theorem~A.1]{Gouezel07}.
\end{rmk}

\begin{proof}
Note that
\begin{align*}
\int_Y|V^Y|\,d\mu_Y\le \int_Y\sum_{\ell=0}^{\tau-1}|V\circ f^\ell|\,d\mu_Y
& =\int_Y\sum_{\ell=0}^{\tau(y)-1}|V\circ\pi(y,\ell)|\,d\mu_Y(y)
\\ & = \bar\tau\int_\Delta |V|\circ\pi \,d\mu_\Delta
 = \bar\tau\int_X |V|\,d\mu_X<\infty.
\end{align*}
So $V^Y\in L^1(Y)$ and similarly $\int_Y V^Y\,d\mu_Y=0$.

Define $\hV=V\circ\pi:\Delta\to\R$ and
$\hV_n=\sum_{j=0}^{n-1}\hV\circ \hf^j$.
Since $\pi$ is a measure-preserving semiconjugacy, condition~(a) is equivalent to 
\begin{itemize}
\item[(a$'$)] $b_n^{-1}\hV_n\to_d G$ on $(\Delta,\mu_\Delta)$ as $n\to\infty$.
\end{itemize}

Note that $\mu_Y$ can be viewed as a probability measure on $\Delta$ supported on $Y$.  As such, $\mu_Y\ll \mu_\Delta$.
By Remark~\ref{rmk:SDC},
we obtain that condition~(a$'$) is equivalent to
\begin{itemize}
\item[(a$''$)] $b_n^{-1}\hV_n\to_d G$ on $(Y,\mu_Y)$ as $n\to\infty$.
\end{itemize}

The lap number $N_n:Y\to\Z^+$ is defined by the relation
\[
\tau_{N_n(y)}(y)\le n < \tau_{N_n(y)+1}(y).
\]
For initial conditions $y\in Y$, we write
\[
\hV_n(y)=V^Y_{N_n(y)}(y)+H(\hf^ny),
\]
where $H:\Delta\to\R$ is given by $H(y,\ell)=\sum_{\ell'=0}^{\ell-1}\hV(y,\ell')$.
Now
\begin{align*}
& \mu_Y(y\in Y:b_n^{-1}|H(\hf^ny)|\ge a)
  =\bar\tau \mu_\Delta(y\in Y:b_n^{-1}|H(\hf^ny)|\ge a)
\\ & \qquad \le \bar\tau \mu_\Delta(x\in \Delta:b_n^{-1}|H(\hf^nx)|\ge a)
 = \bar\tau \mu_\Delta(x\in \Delta:b_n^{-1}|H(x)|\ge a)\to0
\end{align*}
as $n\to\infty$ since $H$ is measurable.  Hence condition~(a$''$) is equivalent to
\begin{itemize}
\item[(a$'''$)] $b_n^{-1}V^Y_{N_n}\to_d G$ on $(Y,\mu_Y)$ as $n\to\infty$.
\end{itemize}
It remains to prove that conditions (a$'''$) and (b) are equivalent.
In other words, we must show that
$b_n^{-1}(V^Y_{N_n}-V^Y_{[n/\bar\tau]})\to_p0$ on $(Y,\mu_Y)$.

We recall some properties of the lap number.
By the ergodic theorem, 
$\lim_{n\to\infty}n^{-1}N_n= \bar\tau^{-1}$ a.e.
Also, $\tau_k\le n$ if and only if $N_n\ge k$.  Let $c>0$ and
set $k=k(n)=[n/\bar\tau+cb_n]$.  A calculation 
shows that if $b_n^{-1}|N_n-n/\bar\tau|>c$ then
$b_n^{-1}|\tau_k-k\bar\tau|\ge c\bar\tau+O(b_n^{-1})$,
so 
$b_k^{-1}|\tau_k-k\bar\tau|\ge cb_k^{-1}b_n\bar\tau+O(b_k^{-1})$.
It follows from the assumptions on $\tau_n$ and $b_n$ that
\[
\mu_Y\big(b_k^{-1}|\tau_k-k\bar\tau|\ge cb_k^{-1}b_n\bar\tau+O(b_k^{-1})\big)\to0 \quad\text{as $n\to\infty$},
\]
and hence that
\begin{align} \label{eq:lap}
b_n^{-1}(N_n-[n/\bar\tau])\to_p 0\quad\text{as $n\to\infty$.}
\end{align} 
 
Passing to the natural extension, we can suppose without loss that $F$ is invertible.  For $n\le-1$, we write $V^Y_n=\sum_{j=n}^{-1}V^Y\circ F^j$.
Then
\[
V^Y_{N_n(y)}(y)-V^Y_{[n/\bar\tau]}(y)=
V^Y_{\tN_n(y)}(F^{[n/\bar\tau]}y)
\quad\text{where}\quad 
\tN_n(y)=N_n(y)-[n/\bar\tau].
\]
Since $F$ is measure-preserving, it suffices to show that
$b_n^{-1}V^Y_{\tN_n}\to_p0$.

By the ergodic theorem,
$n^{-1}V^Y_n\to 0$  a.e.\ and hence in probability as $n\to\pm\infty$.
Let $\eps>0$.  We can choose $\tY\subset Y$ with $\mu_Y(\tY)>1-\eps$
and $N_0\ge1$ such that
$|n^{-1}V^Y_n|<\eps$ on $\tY$ for all $|n|\ge N_0$.

For each $n\ge1$, define 
\[
Y_n'=\{y\in Y:|\tN_n(y)|\le N_0\}, \qquad
Y_n''=\{y\in Y:|\tN_n(y)|> N_0\}.
\]
For $y\in Y_n'$, we have
$|V^Y_{\tN_n(y)}|\le \Psi$, where
$\Psi(y)=\sum_{j=-N_0}^{N_0-1}|V^Y(F^jy)|$.
Note that $|\Psi|_1\le 2N_0|V^Y|_1<\infty$,
so 
\begin{align*}
\mu_Y(y\in Y_n':|b_n^{-1}V^Y_{\tN_n(y)}(y)|>\eps)
& \le 
 \mu_Y(b_n^{-1}\Psi>\eps)<\eps
\end{align*}
for $n$ sufficiently large.

For $y\in Y_n''\cap\tY$, we have $\Big|\frac{1}{|\tN_n|}V^Y_{\tN_n}\Big|<\eps$, and hence
$|b_n^{-1}V^Y_{\tN_n(y)}|< \eps b_n^{-1}|\tN_n|$,
so that
\begin{align*}
\mu_Y(y\in Y_n'':|b_n^{-1}V^Y_{\tN_n(y)}(y)|\ge\eps) 
& \le\mu_Y(b_n^{-1}|\tN_n|\ge1) + \eps.
\end{align*}
By~\eqref{eq:lap}, $b_n^{-1}|\tN_n|\to_p0$.  Hence
$b_n^{-1}V^Y_{\tN_n}\to_p0$,  completing the proof.
\end{proof}

\begin{rmk}\label{rmk:A2}
Suppose that $b_n$ is regularly varying of index $1/\alpha$
with  $\alpha>1$.
Then the assumptions on
$b_n$ in Theorem~\ref{thm:induce} are satisfied,
and condition~(b) can be restated as $b_n^{-1}V^Y_n\to_d \bar\tau^{1/\alpha}G$.
\end{rmk}

\section{The Skorohod topologies on $D[0,1]$}
\label{app:topologies}

Let $D[0,1]$ denote the 
c\`adl\`ag space of right-continuous functions $g:[0,1]\to\R$ with left limits.
The uniform topology on $D[0,1]$ is not suitable for many purposes; on the theoretical side it is not separable, and for applications it is too strong since functions must have jumps in exactly the same place in order to be close to each other.

To circumvent these issues, Skorohod~\cite{Skorohod56} introduced four 
topologies on $D[0,1]$ that are separable and sufficiently strong for theoretical purposes, whilst being sufficiently weak to allow the flexibility for functions to be close to each other in reasonable situations.
The four topologies are ordered by
$$
\cJ_1 > \cJ_2 > \cM_2
	\quad\text{and}\quad
	\cJ_1 > \cM_1 > \cM_2
$$
where $>$ means stronger than. The $\cM_1$ and $\cJ_2$ topologies are not comparable.
All these topologies are weaker than the uniform topology.
The $\cJ_2$ topology plays no role in this paper; we define the remaining topologies below.  For simplicity, we restrict to the interval $[0,1]$.  (The differences between $D[0,1]$ and $D[0,\infty)$ are of a purely technical nature.) 
We refer the reader to ~\cite{Skorohod56,Whitt} for more details and proofs.

\paragraph{The Skorohod $\cJ_1$ topology}

The first Skorohod topology, the $\cJ_1$ topology, is metrizable and is defined through the metric $d_{J_1}$ given by
$$
d_{J_1} (g_1,g_2) =\inf_{\lambda \in \Lambda} \big\{   \|g_2\circ \lambda -g_1\| \,\vee\, \|\lambda -id\|  \big\}
$$
for $g_1, g_2\in D[0,1]$, where $\Lambda$ denotes the space of strictly increasing reparametrizations mapping $[0,1]$ onto itself
and $\|\cdot\|$ denotes the uniform norm. This strong topology, which coincides with the uniform topology on the subspace of continuous functions $C[0,1] \subset D[0,1]$, is suitable to define convergence of discontinuous functions when discontinuities and magnitudes of the jumps are close. For instance, if $a_n \to 1$ then the function $g_n=a_n 1_{[\frac12-\frac{1}{n},1]}$
converges to the function $g=1_{[\frac12,1]}$ in the $\cJ_1$ topology as $n\to\infty$.  (Note that $\|g_n-g\|=|a_n|$ for all $n$, so there is no convergence in the uniform topology.)

\paragraph{The Skorohod $\cM_1$ topology}

In many situations, a single jump in the limit function $g$ corresponds to
multiple smaller jumps in the functions $g_n$.  
In this paper, as in~\cite{MZ15}, the jumps of $g_n$ are $o(1)$ and the limit function $g$ has jumps, so a more flexible topology on $D[0,1]$ is required.

The $\cM_1$ topology on $D[0,1]$ is again metrizable and is defined in terms of the Hausdorff distance
between completed graphs of elements of $D[0,1]$.
Given $g\in D[0,1]$, the \emph{completed graph} 
of $g$ is the set 
$$\Gamma(g)=\{(t,s) \in [0,1]\times \R : s=\alpha g(t^-) +(1-\alpha) g(t), \; \alpha \in [0,1] \}.$$
Let $\Lambda^*(g)$ denotes the space of parameterizations $G=(\lambda, \gamma)\colon [0,1] \to \Gamma(g)$
such that $t'<t$ implies either $\lambda(t')<\lambda(t)$, or $\lambda(t')=\lambda(t)$ and $|\gamma(t)-g(\lambda(t))|
\le |\gamma(t')-g(\lambda(t'))|$. Then the $\cM_1$-metric is defined by 
$$
d_{\cM_1} (g_1,g_2) 
	= \inf_{G_i=(\lambda_i, \gamma_i) \in \Lambda^*(g_i)} 
	\big\{   \|\gamma_1 -\gamma_2\| \,\vee\, \|\lambda_1-\lambda_2\|  \big\}
$$
An example in the spirit of Figure~\ref{fig:I}(a) is obtained by
defining $g_n=\frac34 1_{[\frac12-\frac{1}{n},\frac12)}+a_n 1_{[\frac12,1]}$.
If $a_n \to 1$ then $g_n$ converges to
$g=1_{[\frac12,1]}$ in the $\cM_1$ topology as $n\to\infty$,
but not in the $\cJ_1$ topology.

\paragraph{The Skorohod $\cM_2$ topology}

The $\cM_2$ topology on $D[0,1]$ is also defined in terms of the Hausdorff distance
between completed graphs of elements of $D[0,1]$, namely
$d_{\cM_2}(g_1,g_2) = \rho(\Gamma(g_1),\Gamma(g_2))\vee \rho(\Gamma(g_2),\Gamma(g_1))  
$
where
\begin{align*}
\rho(\Gamma(g_1),\Gamma(g_2))
	& =\sup_{(t_1,s_1) \in \Gamma(g_1)} \inf_{(t_2,s_2) \in \Gamma(g_2)} \| (t_1,s_1) - (t_2,s_2) \| .
\end{align*}
(Here, $\| (t_1,s_1) - (t_2,s_2) \|=|t_1-t_2|+|s_1-s_2|$.)
An example in the spirit of Figure~\ref{fig:I}(b) is obtained by
defining 
$g_n=\frac34 1_{[\frac12-\frac{1}{n},\frac12)}+\frac13 1_{[\frac12,\frac12+\frac{1}{n})}
+a_n 1_{[\frac12+\frac{1}{n},1]}$.
If $a_n \to 1$ then $g_n$ converges to
$g=1_{[\frac12,1]}$ in the $\cM_2$ topology as $n\to\infty$,
but not in the $\cM_1$ topology.

An example in the spirit of Figure~\ref{fig:I}(c) is obtained by
defining $g_n=\frac54 1_{[\frac12-\frac{1}{n},\frac12)}+a_n 1_{[\frac12,1]}$,
where $a_n \to 1$.  Then $g_n$ fails to converge in any of the Skorokhod topologies.

We end this appendix with the following instrumental lemma.

\begin{lemma}\label{lem:approx}
Given $g\in D[a,b]$ take  $\bar g\in D[a,b]$ given by $\bar g=1_{[a,b)} g(a) + 1_{\{b\}} g(b)$. Then 
\[
d_{\cM_2, [a,b]}(g,\bar g) \le b-a + A\wedge B,
\]
where 
\begin{align*}
A & = \sup_{t\in [a,b]} (g(a)-g(t)) + \sup_{t\in [a,b]} (g(t)-g(b)), \\
B & = \sup_{t\in [a,b]} (g(t)-g(a)) + \sup_{t\in [a,b]} (g(b)-g(t)).
\end{align*}
\end{lemma}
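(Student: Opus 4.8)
The plan is to work directly from the definition of $d_{\cM_2,[a,b]}$ as the two-sided Hausdorff distance $\rho(\Gamma(g),\Gamma(\bar g))\vee\rho(\Gamma(\bar g),\Gamma(g))$ between completed graphs, and to bound each one-sided quantity by $(b-a)+A\wedge B$. The first step is to identify $\Gamma(\bar g)$ explicitly: since $\bar g\equiv g(a)$ on $[a,b)$ and jumps to $g(b)$ at $t=b$, it is the ``hook''
\[
\Gamma(\bar g)=\{(t,g(a)):a\le t\le b\}\ \cup\ \{(b,s): g(a)\wedge g(b)\le s\le g(a)\vee g(b)\}.
\]
I would also record two standard facts about $\Gamma(g)$. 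Writing $m=\inf_{[a,b]}g$ and $M=\sup_{[a,b]}g$ (finite, since $g$ is c\`adl\`ag on a compact interval), every point of $\Gamma(g)$ has second coordinate in $[m,M]$, because left limits of $g$ lie in $[m,M]$ and $\Gamma(g)$ consists of convex combinations of values and left limits. Moreover $\Gamma(g)$ is compact and connected and contains $(a,g(a))$ and $(b,g(b))$; this is immediate from the existence of a continuous parametrization $[0,1]\to\Gamma(g)$ of the type used to define the $\cM_1$ topology in Appendix~\ref{app:topologies}.

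Next I would show $\rho(\Gamma(\bar g),\Gamma(g))\le b-a$. A point $(t,g(a))$ on the horizontal part of $\Gamma(\bar g)$ is at $\ell^1$-distance $|t-a|\le b-a$ from $(a,g(a))\in\Gamma(g)$. For a point $(b,s)$ on the vertical part, $s$ lies between $g(a)$ and $g(b)$, which are the second coordinates of the points $(a,g(a)),(b,g(b))\in\Gamma(g)$; since the second-coordinate map is continuous on the connected set $\Gamma(g)$, it attains the value $s$ at some $(t^\ast,s)\in\Gamma(g)$, and $(b,s)$ is then at distance $|b-t^\ast|\le b-a$ from this point. As $A,B\ge0$, this already gives $\rho(\Gamma(\bar g),\Gamma(g))\le(b-a)+A\wedge B$.

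For the reverse inequality, given $(t,s)\in\Gamma(g)$ I would use the point $(b,\pi(s))\in\Gamma(\bar g)$, where $\pi(s)$ is the nearest-point projection of $s$ onto the interval $[g(a)\wedge g(b),\,g(a)\vee g(b)]$; this point lies on the vertical part of $\Gamma(\bar g)$. Its $\ell^1$-distance to $(t,s)$ equals $|b-t|+\operatorname{dist}\big(s,[g(a)\wedge g(b),g(a)\vee g(b)]\big)\le(b-a)+\operatorname{dist}\big(s,[g(a)\wedge g(b),g(a)\vee g(b)]\big)$. Since $s\in[m,M]$ and $m\le g(a)\wedge g(b)\le g(a)\vee g(b)\le M$, the last distance is at most $\max\{M-g(a)\vee g(b),\ g(a)\wedge g(b)-m\}$, and each of these two numbers is $\le A$ and $\le B$ via the trivial chains $M-g(a)\vee g(b)\le M-g(b)\le A$, $\ M-g(a)\vee g(b)\le M-g(a)\le B$, $\ g(a)\wedge g(b)-m\le g(a)-m\le A$, and $g(a)\wedge g(b)-m\le g(b)-m\le B$. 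Hence $\rho(\Gamma(g),\Gamma(\bar g))\le(b-a)+A\wedge B$, and taking the maximum of the two one-sided bounds yields the lemma.

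Essentially all of this is bookkeeping; the one step that is not is the intermediate-value argument on the vertical segment of $\Gamma(\bar g)$, which uses connectedness of $\Gamma(g)$. I expect this to be the ``main obstacle'' only in that it must be invoked explicitly, and it is a standard consequence of the parametric representation of elements of $D[a,b]$ that underpins the $\cM_1$ topology, so no genuine difficulty arises.
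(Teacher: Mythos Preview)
Your proposal is correct and follows essentially the same route as the paper: compute $\Gamma(\bar g)$ explicitly, bound $\rho(\Gamma(\bar g),\Gamma(g))\le b-a$ via an intermediate-value/connectedness argument on $\Gamma(g)$, and bound $\rho(\Gamma(g),\Gamma(\bar g))\le (b-a)+A\wedge B$ by measuring how far the second coordinate of a point of $\Gamma(g)$ can stray from the interval between $g(a)$ and $g(b)$. The only cosmetic differences are that the paper assumes WLOG $g(b)\ge g(a)$ and treats the three ranges of $s$ by a case split (projecting sometimes to the horizontal part of $\Gamma(\bar g)$, sometimes to the vertical), whereas you handle both orderings simultaneously via $\wedge,\vee$ and project uniformly to the vertical segment; your formulation via $m=\inf g$, $M=\sup g$ is arguably a little cleaner.
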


\begin{proof}
We assume that $g(b)\ge g(a)$
(the case $g(b)<g(a)$ is entirely analogous).
Then $\Gamma(\bar g)=\{(t,g(a)):a\le t\le b\}\cup\{(b,s):g(a)\le s\le g(b)\}$.  Also $\Gamma(g)\subset[a,b]\times\R$ and intersects every horizontal line between $s=g(a)$ and $s=g(b)$.  

For every $(t,s)\in\Gamma(\bar g)$, there exists $t'\in[a,b]$ such that
$(t',s) \in\Gamma(g)$.
Then $\|(t,s)-(t',s)\|\le b-a$ and hence $\rho(\Gamma(\bar g),\Gamma(g))\le b-a$.

It remains to estimate $\rho(\Gamma(g),\Gamma(\bar g))$.  Let $(t,s)\in\Gamma(g)$.
\begin{itemize}
\item
If $s\in[g(a),g(b)]$, then $(b,s)\in\Gamma(\bar g)$ and
$\|(t,s)-(b,s)\|\le b-a$.
\item If $s<g(a)$, then $(t,g(a))\in\Gamma(\bar g)$ and $g(t)\le s<g(a)$, so
$\|(t,s)-(t,g(a))\|= g(a)-s\le g(a)-g(t)=(g(a)-g(t))\wedge (g(b)-g(t))
\le A\wedge B$.
\item If $s>g(b)$, then $(b,g(b))\in\Gamma(\bar g)$ and there exists $t'\in[a,b]$ such that $g(t')\ge s>g(b)$.  Hence
$\|(t,s)-(b,g(b))\|\le b-a+s-g(b)\le b-a+g(t')-g(b)=b-a+(g(t')-g(b))\wedge (g(t')-g(a))\le b-a+A\wedge B$.
\end{itemize}
In all cases, $\inf_{(\bar t,\bar s)\in\Gamma(\bar g)}\|(t,s)-(\bar t,\bar s)\|\le b-a+A\wedge B$ so 
$\rho(\Gamma(g),\Gamma(\bar g))\le b-a+A\wedge B$
completing the proof.
\end{proof}

\paragraph{Acknowledgements}
The research of IM was supported in part by a
European Advanced Grant {\em StochExtHomog} (ERC AdG 320977)
and by CNPq (Brazil) through PVE grant number 313759/2014-6.
We are grateful to Adam Jakubowski for pointing out reference~\cite{BasrakKrizmanic14}.

\def\polhk#1{\setbox0=\hbox{#1}{\ooalign{\hidewidth
  \lower1.5ex\hbox{`}\hidewidth\crcr\unhbox0}}}


\begin{thebibliography}{10}

\bibitem{AaronsonDenker01}
J.~Aaronson and M.~Denker. {Local limit theorems for partial sums of stationary
  sequences generated by Gibbs-Markov maps}. \emph{Stoch. Dyn.} \textbf{1}
  (2001) 193--237.

\bibitem{AvramTaqqu92}
F.~Avram and M.~S. Taqqu. Weak convergence of sums of moving averages in the
  {$\alpha$}-stable domain of attraction. \emph{Ann. Probab.} \textbf{20}
  (1992) 483--503.

\bibitem{BalintChernovDolgopyat11}
P.~B{\'a}lint, N.~Chernov and D.~Dolgopyat. Limit theorems for dispersing
  billiards with cusps. \emph{Comm. Math. Phys.} \textbf{308} (2011) 479--510.

\bibitem{BalintGouezel06}
P.~B{\'a}lint and S.~Gou{\"e}zel. Limit theorems in the stadium billiard.
  \emph{Comm. Math. Phys.} \textbf{263} (2006) 461--512.

\bibitem{BL}
F. Bartumeus and S. A. Levin.
Fractal reorientation clocks: Linking animal behavior to statistical patterns of search.
\emph{Proc. Natl. Acad. Sci. USA} \textbf{105} (2008) 19072--19077.


\bibitem{BasrakKrizmanic14}
B.~Basrak and D.~Krizmani\'{c}. A limit theorem for moving averages in the
  {$\alpha$}-stable domain of attraction. \emph{Stochastic Process. Appl.}
  \textbf{124} (2014) 1070--1083.

\bibitem{BasrakKrizmanicSegers12}
B.~Basrak, D.~Krizmani\'c and J.~Segers. A functional limit theorem for
  dependent sequences with infinite variance stable limits. \emph{Ann. Probab.}
  \textbf{40} (2012) 2008--2033.

\bibitem{Billingsley99}
P.~Billingsley. \emph{Convergence of probability measures}, second ed., Wiley
  Series in Probability and Statistics. John Wiley
  \& Sons Inc., New York, 1999.

\bibitem{BruinLuzzattoStrien03}
H.~Bruin, S.~Luzzatto and S.~van Strien. Decay of correlations in
  one-dimensional dynamics. \emph{Ann. Sci. \'Ecole Norm. Sup.} \textbf{36}
  (2003) 621--646.

\bibitem{BruinTerhesiu18}
H.~Bruin and D.~Terhesiu. Upper and lower bounds for the correlation function
  via inducing with general return times. \emph{Ergodic Theory Dynam. Systems}
  \textbf{38} (2018) 34--62.

\bibitem{BunimSinai81}
L.~A. Bunimovich and Y.~G. Sina{\u\i}. Statistical properties of {L}orentz gas
  with periodic configuration of scatterers. \emph{Comm. Math. Phys.}
  \textbf{78} (1980/81) 479--497.

\bibitem{BunimovichSinaiChernov91}
L.~A. Bunimovich, Y.~G. Sina{\u\i} and N.~I. Chernov. Statistical properties
  of two-dimensional hyperbolic billiards. \emph{Uspekhi Mat. Nauk} \textbf{46}
  (1991) 43--92.

\bibitem{Burkholder73}
D.~L. Burkholder. Distribution function inequalities for martingales.
  \emph{Ann. Probability} \textbf{1} (1973) 19--42.

\bibitem{Chernov99}
N.~Chernov. Decay of correlations and dispersing billiards. \emph{J. Statist.
  Phys.} \textbf{94} (1999) 513--556.

\bibitem{ChernovMarkarian}
N.~Chernov and R.~Markarian. \emph{Chaotic billiards}. Mathematical Surveys and
  Monographs \textbf{127}, American Mathematical Society, Providence, RI, 2006.

\bibitem{ChernovMarkarian07}
N.~Chernov and R.~Markarian. Dispersing billiards with cusps: slow decay of
  correlations. \emph{Comm. Math. Phys.} \textbf{270} (2007) 727--758.

\bibitem{ChernovZhang05b}
N.~Chernov and H.-K. Zhang. A family of chaotic billiards with variable mixing
  rates. \emph{Stoch. Dyn.} \textbf{5} (2005) 535--553.

\bibitem{ChernovZhang05}
N.~I. Chernov and H.-K. Zhang. {Billiards with polynomial mixing rates}.
  \emph{Nonlinearity} \textbf{18} (2005) 1527--1553.

\bibitem{SimoiToth14}
J.~De~Simoi and I.~P. T\'oth. An expansion estimate for dispersing planar
  billiards with corner points. \emph{Ann. Henri Poincar\'e} \textbf{15}
  (2014) 1223--1243.

\bibitem{Doob}
J.~L. Doob. \emph{Stochastic processes}. Wiley Classics Library, John Wiley \&
  Sons, Inc., New York, 1990. Reprint of the 1953 original. A
  Wiley-Interscience Publication.

\bibitem{Eagleson76}
G.~K. Eagleson. {Some simple conditions for limit theorems to be mixing}.
  \emph{Teor. Verojatnost. i Primenen} \textbf{21} (1976) 653--660.

\bibitem{GaspardWang88}  P. Gaspard and X.-J. Wang.
Sporadicity: Between periodic and chaotic dynamical behaviors.
\emph{Proc. Natl. Acad. Sci. USA} \textbf{85} (1988) 4591--4595.

\bibitem{Gordin69}
M.~I. Gordin. {The central limit theorem for stationary processes}.
  \emph{Soviet Math. Dokl.} \textbf{10} (1969) 1174--1176.

\bibitem{Gouezel04}
S.~Gou{\"e}zel, {Central limit theorem and stable laws for intermittent maps},
  \emph{Probab. Theory Relat. Fields} \textbf{128} (2004), 82--122.

\bibitem{Gouezel07}
S.~Gou{\"e}zel. {Statistical properties of a skew product with a curve of
  neutral points}. \emph{Ergodic Theory Dynam. Systems} \textbf{27} (2007)
  123--151.

\bibitem{Gouezel10b}
S.~Gou{\"e}zel. Characterization of weak convergence of {B}irkhoff sums for
  {G}ibbs-{M}arkov maps. \emph{Israel J. Math.} \textbf{180} (2010) 1--41.

\bibitem{Jakubowski07}
A.~Jakubowski. The Skorokhod Space in functional convergence: a short
  introduction. \emph{International conference: Skorokhod Space. 50 years on,
  17-23 June 2007, Kyiv, Ukraine, Part I, s.\ 11-18}, 2007.

\bibitem{JMPVZ}
P.~Jung, I.~Melbourne, F.~P\`ene, P.~Varandas and H.-K. Zhang. 
Necessary and sufficient condition for $\cM_2$-convergence to a L\'evy process
 for billiards with cusps at flat points.
Preprint, 2019. arXiv:1902.08958.

\bibitem{JungPeneZhangsub}
P.~Jung, F.~P\`ene and H.-K. Zhang. Convergence to $\alpha$-stable L\'evy
  motion for chaotic billiards with several cusps at flat points. Preprint,
  2018.  arXiv:1809.08021.

\bibitem{JungZhangsub}
P.~Jung and H.-K. Zhang. Stable laws for chaotic billiards with cusps at flat
  points. \emph{Annales Henri Poincar\'e} \textbf{19} (2018) 3815--3853.

\bibitem{KGSSR} R. Klages, S. Gallegos, J. Solanp\"a\"a, M. Sarvilahti and
E R\"as\"anen.  Normal and anomalous diffusion in soft Lorentz gases.
\emph{Phys. Rev. Lett.} \textbf{122} (2019) 064102.


\bibitem{LSV99}
C.~Liverani, B.~Saussol and S.~Vaienti. {A probabilistic approach to
  intermittency}. \emph{Ergodic Theory Dynam. Systems} \textbf{19} (1999)
  671--685.

\bibitem{Lorentz05}
H.~Lorentz. The motion of electrons in metallic bodies. \emph{Proc. Amst. Acad.}
  \textbf{7} (1905) 438--453.

\bibitem{Markarian04}
R.~Markarian. Billiards with polynomial decay of correlations. \emph{Ergodic
  Theory Dynam. Systems} \textbf{24} (2004) 177--197.

\bibitem{MN05}
I.~Melbourne and M.~Nicol. Almost sure invariance principle for nonuniformly
  hyperbolic systems. \emph{Comm. Math. Phys.} \textbf{260} (2005) 131--146.

\bibitem{MT04}
I.~Melbourne and A.~T{\" o}r{\" o}k. {Statistical limit theorems for suspension
  flows}. \emph{Israel J. Math.} \textbf{144} (2004) 191--209.

\bibitem{MZ15}
I.~Melbourne and R.~Zweim{\"u}ller. {Weak convergence to stable L\'evy
  processes for nonuniformly hyperbolic dynamical systems}. \emph{Ann\ Inst. H.
  Poincar{\'e} (B) Probab. Statist.} \textbf{51} (2015) 545--556.

\bibitem{Metzler}
R. Metzler, J-H. Jeon, A. G. Cherstvy and E. Barkai.
 Anomalous diffusion models and their properties: non-stationarity, non-ergodicity, and ageing at the centenary of single particle tracking.
\emph{Phys. Chem. Chem. Phys.} \textbf{16} (2014) 24128--24164.

\bibitem{PVHS} B. Podobnik, A. Valentin\u{c}i\u{c}, D. Horvati\'c and
H. E. Stanley.
Asymmetric L\'evy flight in financial ratios.
\emph{Proc. Natl. Acad. Sci. USA} \textbf{108} (2011) 17883--17888.


\bibitem{PomeauManneville80}
Y.~Pomeau and P.~Manneville. Intermittent transition to turbulence in
  dissipative dynamical systems. \emph{Comm. Math. Phys.} \textbf{74} (1980)
  189--197.

\bibitem{SamorodnitskyTaqqu}
G.~Samorodnitsky and M.~S. Taqqu. \emph{Stable non-{G}aussian random
  processes: Stochastic models with infinite variance.} Chapman \& Hall, New York, 1994. 

\bibitem{Sinai70}
Y.~G. Sina{\u\i}. Dynamical systems with elastic reflections. {E}rgodic
  properties of dispersing billiards. \emph{Uspehi Mat. Nauk} \textbf{25}
  (1970) 141--192.

\bibitem{Skorohod56}
A.~V. Skorohod. Limit theorems for stochastic processes. \emph{Teor.
  Veroyatnost. i Primenen.} \textbf{1} (1956) 289--319.

\bibitem{Swinney}
T. H. Solomon, E. R. Weeks and H. L. Swinney.
Chaotic advection in a two-dimensional flow: L\'evy flights and anomalous diffusion.
\emph{Phys. D} \textbf{76} (1994) 70--84.

\bibitem{Thaler95}
M.~Thaler. A limit theorem for the {P}erron-{F}robenius operator of
  transformations on {$[0,1]$} with indifferent fixed points. \emph{Israel J.
  Math.} \textbf{91} (1995) 111--127.

\bibitem{TyranKaminska10}
M.~Tyran-Kami{\'n}ska. Weak convergence to {L}\'evy stable processes in
  dynamical systems. \emph{Stoch. Dyn.} \textbf{10} (2010) 263--289.

\bibitem{Whitt}
W.~Whitt. \emph{Stochastic-process limits}. Springer Series in Operations
  Research, Springer-Verlag, New York, 2002. An introduction to
  stochastic-process limits and their application to queues.

\bibitem{Young98}
L.-S. Young. Statistical properties of dynamical systems with some
  hyperbolicity. \emph{Ann. of Math.} \textbf{147} (1998) 585--650.

\bibitem{Young99}
L.-S. Young. Recurrence times and rates of mixing. \emph{Israel J. Math.}
  \textbf{110} (1999) 153--188.

\bibitem{Zhang17a}
H.-K. Zhang. Decay of correlations for billiards with flat points {II}: cusps
  effect. \emph{Dynamical systems, ergodic theory, and probability: in memory
  of {K}olya {C}hernov}. Contemp. Math. \textbf{698}, Amer. Math. Soc.,
  Providence, RI, 2017, pp.~287--316.

\bibitem{Zweimuller98}
R.~Zweim{\"u}ller, Ergodic structure and invariant densities of non-{M}arkovian
  interval maps with indifferent fixed points, \emph{Nonlinearity} \textbf{11}
  (1998), 1263--1276.

\bibitem{Zweimuller07}
R.~Zweim{\"u}ller. Mixing limit theorems for ergodic transformations. \emph{J.
  Theoret. Probab.} \textbf{20} (2007) 1059--1071.

\end{thebibliography}
\end{document}